\def\leftintendforlists{4mm}
\newcommand{\cmark}{\ding{51}}%
\newcommand{\xmark}{\ding{55}}%
\newenvironment{figurehere}
  {\def\@captype{figure}%
   \par\medskip\noindent\minipage{\linewidth}}
  {\endminipage\par\medskip}
\newcommand{\mVone}{$k$-SVRG-V1}
\newcommand{\mVtwo}{$k$-SVRG-V2}
\newcommand{\mpract}{$k_2$-SVRG}
\newcommand{\mHone}{-SVRG-V1}
\newcommand{\mHtwo}{-SVRG-V2}
\newcommand{\mHpract}{-SVRG}
\DeclareMathOperator*{\argmin}{arg\,min}
\def\R{{\mathbb{R}}}
\providecommand{\abs}[1]{{\left\lvert#1\right\rvert}}
\newcommand{\norm}[1]{{ \left\lVert#1\right\rVert }}
\def\bbE{{\mathbb{E}}}
\def\cO{{\mathcal{O}}}
\newtheorem*{rep@theorem}{\rep@title}
\newcommand{\newreptheorem}[2]{%
\newenvironment{rep#1}[1]{%
 \def\rep@title{#2 \ref{##1}}%
 \begin{rep@theorem}}%
 {\end{rep@theorem}}}
\newtheorem{theorem}{Theorem}
\newtheorem{lemma}[theorem]{Lemma}
\newtheorem{remark}{Remark}
\renewcommand{\text}[1]{{\textnormal{#1}}}
\newcommand{\lin}[1]{{\left\langle #1 \right\rangle}}
\title{$k$-SVRG: Variance Reduction for Large Scale Optimization}
\author{Anant Raj\thanks{MPI T\"{u}bingen, Germany. Email: \texttt{anant.raj@tuebingen.mpg.de}} 
\and Sebastian U. Stich\thanks{EPF Lausanne (EPFL), Switzerland. Email: \texttt{sebastian.stich@epfl.ch}}
}
\date{}
\begin{document}
\maketitle

\begin{abstract} 
Variance reduced stochastic gradient (SGD) methods converge significantly faster than the vanilla SGD counterpart. However, these methods are not very practical on large scale problems, as they either i) require frequent passes over the full data to recompute gradients---without making any progress during this time (like for SVRG), or ii)~they require additional memory that can surpass the size of the input problem (like for SAGA).

In this work, we propose $k$-SVRG that addresses these issues by making 
best use of the \emph{available} memory and minimizes the stalling phases without progress. We prove linear convergence of $k$-SVRG on strongly convex problems and convergence to stationary points on non-convex problems. Numerical experiments show the effectiveness of our method.
\end{abstract} 

\section{Introduction}
\label{sec:intro}
We study optimization algorithms for empirical risk minimization problems $f \colon \mathbb{R}^d \to \mathbb{R}$ of the form
\begin{align}
x^\star := \argmin_{x} f(x)\,,    & & \text{with} & &f(x) : = \frac{1}{n} \sum_{i = 1}^n f_i(x)\,, \label{eq:opt_prob}
\end{align}
where each $f_i \colon \R^d \to \R$ is $L$-smooth. 

Problems with this structure are omnipresent in machine learning, especially in supervised learning applications~\citep{christopher2016pattern}. 

Stochastic gradient descent (SGD)~\citep{Robbins:1951ko} is frequently used to solve optimization problems in machine learning. One drawback of SGD is that it 
does not converge at the optimal rate on many problem classes (cf.~\citep{nemirovski2009robust,lacoste2012simpler}).
\emph{Variance reduced} methods have been introduced to overcome this challenge. Among the first of these methods were SAG~\citep{LeRoux:2012nips}, SVRG~\citep{johnson2013accelerating}, SDCA~\citep{ShalevShwartz:2013wl} and SAGA~\citep{defazio2014saga}. 
The variance reduced methods can roughly be divided in two classes, namely i)~methods that achieve variance reduction by computing (non-stochastic) gradients of~$f$ from time to time, as for example done SVRG, and ii)~methods that maintain a table of previously computed stochastic gradients, such as done in SAGA.

Whilst these technologies allow the variance reduced methods to converge at a faster rate than vanilla SGD, they do not scale well to problems of very large scale. The reasons are simple: i)~not only is computing a full batch gradient $\nabla f(x)$ almost inadmissible when the number of samples $n$ is large, the optimization progress of SVRG \emph{completely stalls} while this expensive computation takes place. This is avoided in SAGA, but ii)~at the cost of $\cO(dn)$ \emph{additional} memory. When the data is sparse and the stochastic gradients $\nabla f_i(x)$ are not, the memory requirements can thus surpass the size of the dataset by orders of magnitude.

In this work we address these issues and propose a class of variance reduced methods that have i)~shorter stalling phases of only order $\cO(n/k)$ at the expense of only $\tilde{\cO}(kd)$ additional memory. Here $k$ is a parameter that can be set freely by the user. To get short stalling phases, it is advisable to set $k$ such as to fit the capacity of the fast memory of the system. We show that the new methods converge as fast as SVRG and SAGA on convex and non-convex problems, but are more practical for large $n$. As a side-product of our analysis, we also crucially refine the previous theoretical analysis of SVRG, as we will outline in Section~\ref{sec:contributions} below.

\begin{table*}[t] 
\centering
 \begin{tabular}{|l l l  l c |} 
 \hline
 method & complexity  & additional memory & \emph{in situ} $\nabla f_i$ comp. & no full pass \\
 \hline\hline
Gradient Descent & $\cO(n\kappa\log{\frac{1}{\epsilon}})$ &  $\cO(d)$ & $\cO(n)$ & \xmark  \\ 
SAGA & $\cO((n+\kappa)\log{\frac{1}{\epsilon}})$&   $\cO(dn)$ &  $\cO(1)$ &\cmark  \\
SVRG & $\cO((n+\kappa)\log{\frac{1}{\epsilon}})$  &  $\cO(d)$ & $\cO(n)$ &\xmark \\ 
 SCSG  & $\cO((\frac{\kappa}{\epsilon} \wedge   n+\kappa)\log{\frac{1}{\epsilon}})$ &  $\cO(d)$ & $<n$ &\cmark   \\
 $k$-SVRG & $\cO((n+\kappa)\log{\frac{1}{\epsilon}})$&  $\cO((dk+n) \log k)$ & $\cO(\frac{n}{k})$ &\cmark  \\[0ex] 
 \hline
 \end{tabular}
 \caption {Comparison of running times and (additional) storage requirement for different algorithms on strongly convex functions, where $\kappa = L/\mu $ denotes the condition number.  Most algorithms require \emph{in situ} computations of many $\nabla f_i(x)$ for the same $x$ without making progress. The longest such stalling phase is indicated, sometimes amounting to a full pass over the data (also indicated). 
 }
 \label{tb:table1}
\end{table*}

\subsection{SVRG, SAGA and $k$-SVRG}
\label{sec:introsvrg}

\textbf{SVRG} is an iterative algorithm, where in each each iteration only stochastic gradients, i.e. $\nabla f_i(x)$ for a random index $i \in [n]$, are computed, much like in SGD. In order to attain variance reduction a full gradient $\nabla f(x)$  is computed at a \emph{snapshot} point in every few epochs.
There are three issues with SVRG: 
i)~the computation of the full gradient requires a full pass over the dataset. No progress (towards the optimal solution) is made during this time (see illustration in Figure~\ref{fig:intro}). On large scale problems, where one pass over the data might take several hours, this can yield to wasteful use of resources; ii)~the theory requires the algorithm to restart at every snapshot point, resulting in discontinuous behaviour (see Fig.~\ref{fig:intro}) and iii)~on strongly convex problems, the snapshot point can only be updated every $\Omega(\kappa)$ iterations~(cf.~\citep{Bubeck:2014vm,johnson2013accelerating}), where $\kappa = L/\mu$ denotes the condition number (see~\eqref{def:strongc}). When the condition number is large, this means that the algorithm relies for a long time on ``outdated'' deterministic information. In practice---as suggested in the original paper by~\citet{johnson2013accelerating}---the update interval is often set to $O(n)$, without theoretical justification.

\textbf{SAGA} circumvents the stalling phases by treating every iterate as a \emph{partial snapshot} point. That is, for each index $i \in [n]$ a full dimensional vector is kept in memory and updated with the current value $\nabla f_i(x)$ if index $i$ is picked in the current iteration. Hence, intuitively, in SAGA the gradient information at partial snapshot point does have more recent information about the gradient as compared to SVRG.

A big drawback of this method is the memory consumption: unless there are specific assumptions on the structure\footnote{Cf. the discussion in~\citep[Sec. 4]{defazio2014saga}.} of $f$, this requires $O(dn)$ memory (sparsity of the data does not necessarily imply sparsity of the gradients). For large scale problems it is impossible to keep all data available in fast memory (i.e. cache or RAM) which means we can not run SAGA on large scale problems which do not have GLM structure. Although SAGA can sometimes converge faster than SVRG (but not always, cf.~\citep{defazio2014saga}), the high memory requirements prohibit it's use. 
One main advantage of this algorithm is that the convergence can be proven for every single iterate\footnote{More precisely, convergence is not directly shown on the iterates, but in terms of an auxilarly Lyapunov function.}---thus justifying stopping the algorithm at any arbitrary time---whereas for SVRG convergence can only be proven for the snapshot points. 
 
We propose \textbf{$k$-SVRG}, a class of algorithms that addresses the limitations of both, SAGA and SVRG. 
Compared to SVRG the proposed schmes have a reduced memory footprint of only $\tilde{\cO}(kd)$ and therefore allow to optimally use the available (fast) memory. Compared to SVRG the schemes avoid long stalling phases on large scale applications (see Fig.~\ref{fig:intro}). The methods do not require restarts and show smoother convergence than SVRG (see Fig.~\ref{fig:intro}).
As for SVRG, the convergence can only be guaranteed for snapshot points. However, unlike as in the original SVRG, the proposed $1$-SVRG updates the snapshot point every single epoch ($n$ iterations) and thus provides more fine grained performance guarantees than the original SVRG with $\Omega(\kappa)$ iterations between snapshot points. 

\begin{figure*}[t]
\centering
\vspace{2mm}
  \includegraphics[width=0.32\linewidth]{../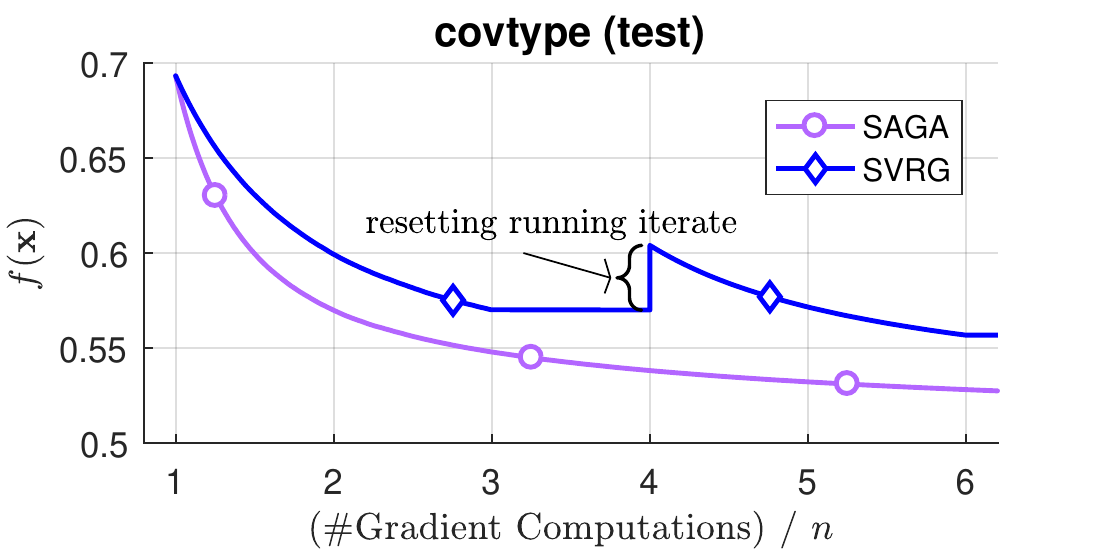}
\hfill
  \includegraphics[width=0.32\linewidth]{../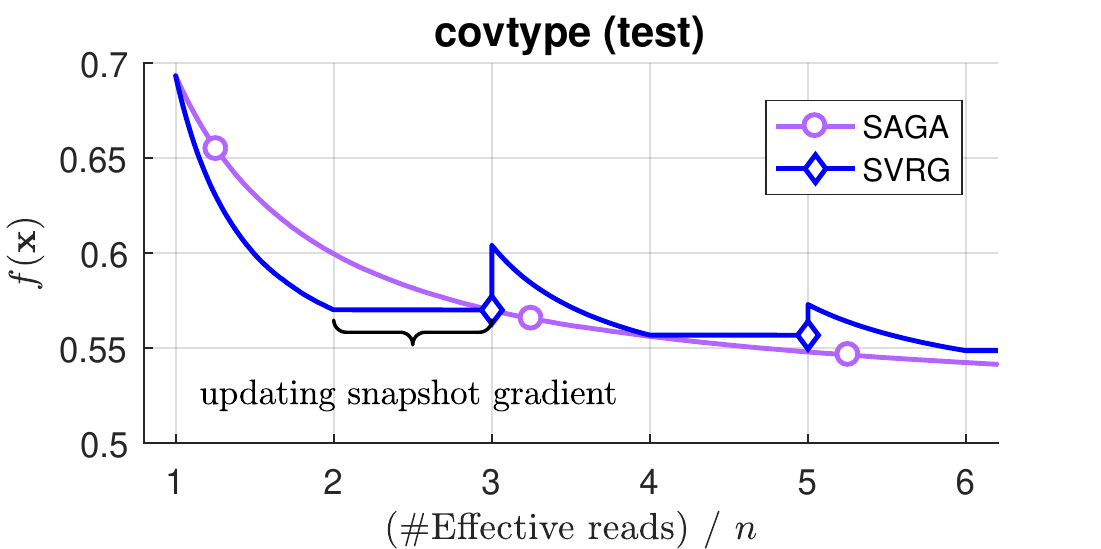}
\hfill
  \includegraphics[width=0.32\linewidth]{../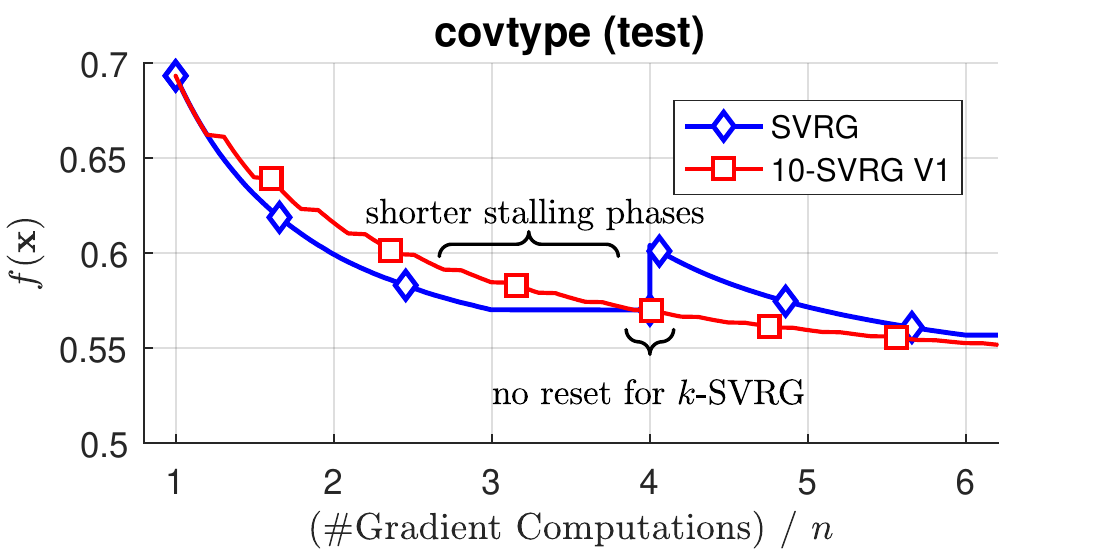}  
\caption{Convergence behavior of SAGA, SVRG and $k$-SVRG. Left \& Middle: SVRG recomputes the gradient at the snapshot point which yields to stalling for a full epoch both with respect to computation (left) and memory access (middle). SAGA requires only one stochastic gradient computation per iteration (left), but also one memory access (middle: roughly the identical performance as SVRG w.r.t. memory access). Right: $k$-SVRG does not reset the iterates at a snapshot point and equally distributes the stalling phases.}
\label{fig:intro}
\end{figure*}

\subsection{Contributions} 
\label{sec:contributions}
We present $k$-SVRG, a limited memory variance reduced optimization algorithm that combines several good properties of SVRG as well as of SAGA. We propose two variants of $k$-SVRG that require to store $\tilde{\cO}(k)$ vectors and enjoy the theoretical convergence guarantees, and one (more practical) variant that requires only $2k$ additional vectors in memory. Some key properties of our proposed approaches are:
\vspace{-2mm}
\begin{itemize}[leftmargin=4mm]
\setlength{\itemsep}{-1pt}
 \item Low memory requirements (like SVRG, unlike SAGA): We break the memory barrier of SAGA. The required additional memory can freely be chosen by the user (parameter $k$) and thus all available fast memory (but not more!) can be used by the algorithm.
 \item Avoiding long stalling phases (like SAGA, unlike SVRG): 
 This is in particular useful in large scale applications.
 \item Refinement of the SVRG analysis. To the best of our knowledge we present the first analysis that allows arbitrary sizes of inner loops, not only $\Omega(\kappa)$ as was supported by previous results.
 \item Linear convergence on strongly-convex problems (like SVRG, SAGA), cf. Table~\ref{tb:table1}.
 \item Convergence on non-convex problems (like SVRG, SAGA).
\end{itemize}

\paragraph{Outline.}
We informally introduce $k$-SVRG in Section~\ref{sec:informal} and give the full details in Section~\ref{sec:algrithms}.
All theoretical results are presented in Section~\ref{sec:theory}, the proofs can be found in Appendix~\ref{app:proof_convex} and~\ref{app:non_convex_proof}. We discuss the empirical performance in Section~\ref{sec:expts}.

\subsection{Related Work}

Variance reduction alone is not sufficient to obtain the optimal convergence rate on problem~\eqref{eq:opt_prob}. Accelerated schemes that combine the variance reduction with momentum as in Nesterov's acceleration technique~\citep{Nesterov:1983wy} achieve optimal convergence rate~\citep{allen2017katyusha,Lin:2015ue}. We do not discuss accelerated methods in this paper, however, we assume that it should be possible to accelerate the presented algorithm with the usual techniques.

There have also been significant efforts in developing stochastic variance reduced methods for non-convex problems~\citep{allen2016improved,reddi2016fast,reddi2015variance,allen2016variance,shalev2016sdca,paquette2017catalyst}. We will especially build on the technique proposed in~\citep{reddi2016fast} to derive the convergence analysis in the non-convex setting.

Recent work has also addressed the issue of making the stalling phase of SVRG shorter. In~\citep{lei2016less,lei2017non} the authors propose SCSG, a method that makes only a batch gradient update instead of a full gradient update. However, this gives a slower rate of convergence (cf. Table~\ref{tb:table1}). In another line of work, there was an effort to combine the SVRG and SAGA approach in an asynchronus optimization setting~\citep{reddi2015variance} (HSAG) to run different updates in parallel. HSAG interpolates between SAGA and SVRG ``per datapoint'' which means snapshot points corresponding to indices in a (fixed) set $S$ are updated like in SAGA, whereas all other snapshot points are updated after each epoch. This is orthogonal to our approach: we treat all datapoints ``equally''. All snapshot points are updated in the same, block-wise fashion. Also, convergence of HSAG is not guaranteed for every value of $k$. In another line of work~\citet{hofmann2015variance} studied a version of SAGA with more than one update per iteration. 

\section{$k$-SVRG: A Limited Memory Approach} 
\label{sec:informal}
In this section, we informally introduce our proposed limited memory algorithm $k$-SVRG.
For this, we will first present a unified framework that allows us to describe the algorithms SVRG and SAGA in concise notation.
 Let $x_0,x_1,\dots,x_T$ denote the iterates of the algorithm, where $x_0 \in \R^d$ is the starting point. For each component $f_i$, $i \in [n]$, of the objective function~\eqref{eq:opt_prob} we denote by $\theta_i \in \R^d$ the corresponding snapshot point. The updates of the algorithms take the form
\begin{align}
\begin{split}
 x_{t+1} &= x_t - \eta g_{i_t}(x_t)\,, \qquad\qquad \text{with}  \label{eq:gen_update} \\
 g_{i_t}(x_t) &:= \nabla f_{i_t}(x_t) - \nabla f_{i_t}(\theta_{i_t}) +\frac{1}{n} \sum_{i=1}^n \nabla f_i (\theta_i)\,, 
\end{split} 
\end{align}
where $\eta >0 $ denotes the stepsize, and $i_t \in [n]$ an index (typically selected uniformly at random from the set $[n]$).
The updates of SVRG and SAGA can both be written in this general form, as we will review now. 
\begin{description}[leftmargin=\leftintendforlists]
\setlength{\itemsep}{-1pt}
\item[SVRG] As mentioned before, SVRG maintains only one active snapshot point $x$, i.e. $\theta_i = x$ for all $i \in [n]$. Instead of storing all components $\nabla f_i(x)$ separately, it suffices to store one single snapshot point $x$ as well as $\nabla f(x)$ in memory, as all components of the gradient $\nabla f_i(x)$ can be recomputed when applying the update~\eqref{eq:gen_update}. This results in a slight increase in the computation cost, but in drastic reduction in the memory footprint.
\item[SAGA] The update of SAGA takes exactly the form~\eqref{eq:gen_update}. In general $\theta_i \neq \theta_j$ for $i \neq j$. Thus all $\theta_i$ parameters need to be kept in memory. In practice often $\nabla f_i (\theta_i)$ is stored instead, as this avoids recomputation of $\nabla f_i (\theta_i)$. 
\item[$k$-SVRG] 
As a natural interpolation between those two algorithms we propose the following: instead of maintaining just one single snapshot point or $n$ of them, just maintain \emph{a few}. Precisely, the proposed algorithm maintains a set of snapshot points $\Theta \subset \R^d$ of cardinality $\tilde{\cO}(k\log k)$, with the property $\theta_i \in \Theta$ for each $i \in [n]$. Therefore, it suffices to store only $\Theta$ in the memory, and a mapping from each index $i$ to its corresponding element in $\Theta$. This needs $\tilde{\cO}((dk+n)\log k)$ memory. Opposed to SAGA, it is not adviced to store $\nabla f_i(\theta_i)$ directly, as this would require $O(dn)$ memory.
\item[$k_2$-\textbf{SVRG}] 
We also propose a heuristic variant of $k$-SVRG that maintains at most $2k$ snapshot points. This method comes without theoretical convergence rates, however, it shows quite good performance in practice.
\end{description}

We will give a formal definition of the algorithm in the next Section~\ref{sec:algrithms}. Below we introduce some notation that will be needed later.

\subsection{Notation}

Our algorithm consists of updates of two types: updates of the iterates as in~\eqref{eq:gen_update}, performed in the \emph{inner loop} and the updates of the snapshot points at the end of the inner loops (thus constituting the \emph{outer loop}). 
We denote the iterates of the algorithm by $x_t^m$, where $t$ denotes the counter of the inner loop (consisting of $\ell$ iterations), and $m \geq 0$ the counter of the outer loop. For our algorithm (unlike in SAGA), the iterate at the end of an inner loop coincides with the first iterate of the next inner loop, $x_{\ell}^m = x_{0}^{m+1}$. Whenever we only consider the iterates $x_0^m$ we will drop the index zero for convenience.

For clarity, we will also index the snapshot points by $m$, that is we write $\theta^m_i$ for the snapshot point corresponding to the component $f_i$ in the $m^{th}$ outer loop. And consequently, $\Theta^m :=\{\theta^m_i \colon i \in [n]\}$. Thus the update~\eqref{eq:gen_update} now reads

\begin{align}
\begin{split}
x^m_{t+1}  &= x^m_t - \eta g^m_{i_t}(x_t^m)\,, \qquad\qquad \text{with} \label{eq:k-svrg_update}  \\ 
 g^m_{i_t}(x_t^m) &=  \nabla f_{i_t}(x_t^m) - \nabla f_{i_t}(\theta_{i_t}^m) + \frac{1}{n}  \sum_{i=1}^n \nabla f_i (\theta_i^m) \,. 
 \end{split} 
\end{align} 
It will be convenient to define
\begin{align}
 \alpha_i^m &:= \nabla f_i(\theta_i^m) \,, &
 \bar{\alpha}^m &:= \frac{1}{n} \sum_{i=1}^n \alpha_i^m \,.
\end{align}
\paragraph{Notation for Expectation.} $\mathbb{E}$ denotes the full expectation with respect to the joint distribution of all chosen data points. Frequently, we will only consider the updates within one outer loop, and condition on the past iterates. Let $\mathcal{I}_t^m :=\{i_0,\dots,i_{t-1}\}$ denote the set of chosen indices in the $m^{th}$ outer loop until the $t^{th}$ inner loop iteration. Then $\mathbb{E}_{t,m} = \mathbb{E}_{\mathcal{I}_t^m}$ denotes the expectation with respect to the joint distribution of all indices in $\mathcal{I}_t^m$. The algorithm $k$-SVRG-V2 samples additional $q$ indices, independent of $\mathcal{I}_\ell^m$ and we denote the expectation over those samples by $\bbE_q'$. Finally, we also denote $\bbE_{\ell ,m}\bbE_q'$ as $\bbE'_{q,m}$ and $\bbE_{\ell ,m}$ as $\bbE_m$.

\section{The Algorithm} \label{sec:algrithms}
In this section, we present $k$-SVRG in detail.
The pseudecode is given in Algorithm~\ref{algo:svrg_saga_1_2}.
$k$-SVRG consist of inner and outer loops similar to SVRG, however the size of the inner loops is much smaller. Recall that $t =0,\dots,\ell -1$ denotes the counter of the inner loop (where $\ell = \lceil n/k \rceil$), and $m \geq 0$ denotes the counter of the outer loop. Similar as in SVRG, a new snapshot point (denoted by $\tilde{x}^{m+1}$) is computed as an average of the iterates $x_t^m$. However, in our case is a weighted average 
\begin{align}
\tilde{x}^{m+1} :=  \frac{1}{S_{\ell}} \sum_{t = 0}^{\ell -1} (1-\eta\mu)^{\ell -1 -t} x_t^m \,, \label{eq:average}
\end{align}
where the normalization $S_{\ell}$ is defined in line~\ref{lst:defS}. Note that $\mu = 0$ for non-convex functions and the weighted average in~\eqref{eq:average} reduces to a uniform average. 

In Algorithm~\ref{algo:svrg_saga_1_2}, we describe two variants of $k$-SVRG. 
 These variants differ in the way how the snapshot points $\theta_i^m$ are updated at the end of each inner loop. 
 \begin{description}[leftmargin=\leftintendforlists]
\item[\textbf{V1}]  In $k$-{SVRG-V1},
we update the snapshot points as follows, before moving to the $(m+1)^{th}$ outerloop:
\begin{align}
\theta^{m+1}_i := \begin{cases}
    \theta^{m}_i , & \text{if $i \not\in \Phi^m$,}\\
   \tilde{x}^{m+1}, & \text{otherwise}.
  \end{cases} \label{eq:thetaup}
\end{align}
The set $\Phi^m$ keeps track of the selected indices in the inner loop (line~\ref{lst:phi}). Hence, we don't need to store $\abs{\Phi^m}$ copies of the the snapshot point $\tilde{x}^{m+1}$ in memory, it suffices to store one copy and the set $\Phi^m$, as mentioned in Section~\ref{sec:informal} before.
\end{description}
It is not required that the set of indices that are used to update the $\theta_i^m$ are identical with the indices used to compute $\tilde{x}^{m+1}$ in the inner loop. Moreover, also the number points does not need to be the same. The following version of $k$-SVRG makes this independence explicit.
\begin{description}[leftmargin=\leftintendforlists]
\item[\textbf{V2}] 
 In $k$-{SVRG-V2($q$)}, we sample $q$ indices without replacement from $[n]$ at the end of the $m^{th}$ outer loop, which form the set $\Phi^m$, and then update the snapshot points as before in~\eqref{eq:thetaup}.
The suggested choice of $q$ is $\cO(n/k)$, and whenever we drop the argument, we simply set $q= \ell = \lceil n/k \rceil$. 
\end{description}
%
\begin{algorithm}[t]
\begin{algorithmic}[1] 
 \STATE \textbf{goal} minimize $f(x) = \frac{1}{n} \sum_{i = 1}^n f_i(x)$
  \STATE \textbf{init} $x_0^0$, $\ell$, $\eta$, $\mu$, $\alpha_i^0 ~\forall i \in [n]$, $\bar{\alpha}^0 \gets \frac{1}{n} \sum_{i=1}^n\alpha_i^0$ 
  \STATE $S_{\ell} \gets \sum_{i=0}^{\ell -1} (1 - \eta \mu)^{i}$ \label{lst:defS} 
  \STATE \textbf{for} {$m=0\dots M-1$}
  \STATE \quad \textbf{init} $\Phi^m \gets \emptyset$
  \STATE \quad  \textbf{for} {$t=0\dots \ell -1$}
   \STATE \quad \quad pick $i_t \in [n]$ uniformly at random
   \STATE \quad \quad $\alpha^{m}_{i_t} \gets \nabla f_{i_t}(\theta^{m}_{i_t})$  \label{lst:updatealpha}
  \STATE \quad \quad $x_{t+1}^m \gets x_{t}^m -\eta \big(\nabla f_{i_t}(x_t^m) - \alpha_{i_t}^m + \bar{\alpha}^m\big)$ \label{lst:alphaneeded}
  \STATE \quad \quad $\Phi^m \gets \Phi^m \cup \{i_t\}$ \label{lst:phi}
  \STATE \quad \textbf{end for}
 \STATE \quad $\tilde{x}^{m+1} \gets \frac{1}{S_{\ell}} \sum_{t=0}^{\ell -1} (1-\eta\mu)^{\ell -1 -t} x_t^m $ \label{lst:update_tilde_x}
  \STATE \quad $x^{m+1}_0 \gets x^m_{\ell}$
  \STATE \quad \textbf{if} variant  \mVtwo$(q)$ \label{lst:v2start}
  \STATE \quad  \quad  $\Phi^m \gets \text{ sample without replacement }(q,n)$
  \STATE \quad \textbf{end if} \label{lst:V2end}
   \STATE \quad $\theta_i^{m+1} \gets \begin{cases} \tilde{x}^{m+1}, & \text{if } i \in \Phi^m \\  \theta_i^m, &\text{otherwise} \end{cases}$ \label{ln:theta}
     \STATE \quad $ \bar{\alpha}^{m+1} \gets \bar{\alpha}^{m} + \frac{1}{n} \sum_{i\in \Phi^m} \nabla f_i(\theta_i^{m+1}) - \frac{1}{n} \sum_{i\in \Phi^m} \nabla f_i(\theta_i^{m}) $ \label{lst:baralpha} 
  \STATE \textbf{end for}
  \STATE \textbf{return} $\tilde{x}_{M}$ 
\end{algorithmic}
 \caption{\mVone{} / \mVtwo$(q)$ }
 \label{algo:svrg_saga_1_2}
\end{algorithm}

\paragraph{Memory Requirement.}
To estimate the memory requirement we need to know the number of different elements in the set $\Theta$ of snapshot points. The well-studied Coupon-Collector problem~(cf.~\citep{Holst86couponcollector}) tells us that in expectation there are $\cO(n \log n)$ uniform samples needed to pick every index of the set~$[n]$ at least once. 
In Algorithm~\ref{algo:svrg_saga_1_2} precisely $\ell$ samples are picked in each iteration of the inner loop, which implies each single index in $[n]$ gets picked after $\cO(k \log k)$ outer loops. Thus there are in expectation only $\cO(k \log k)$ different different snapshot points at any time ($n \leq k \ell$). These statements do also hold with high probability at the expense of additional poly-log factors in $n$. Thus, $\tilde{\cO}((dk +n)\log k)$ memory suffices to invoke Algorithm~\ref{algo:svrg_saga_1_2}.

We can enforce a hard limit on the memory by slightly violating the random sampling assumption: instead of sampling without replacement in $k$-SVRG-V2, we just process all indices according to a random permutation, and reshuffle after each epoch (the pseudocode is given in Algorithm~\ref{algo:svrg_saga_prac} in Appendix~\ref{app:pseudo_prac}). Clearly, as we process the indices by the order given by random permutations, each index gets picked at least once every $2n$ iterations, i.e. at least once after $2n/\ell \leq 2k$ outer loops. Therefore, there are at most $2k$ distinct snapshot points at any time.
\begin{description}[leftmargin=\leftintendforlists]
\item[$k_2$-SVRG] $k_2$-SVRG deviates from $k$-SVRG-V2 on lines~\ref{lst:v2start}--\ref{lst:V2end}. Instead of sampling $q=\ell$ distinct indices in each outer loop independently, we process the indices by blocks. Concretely, every $k^{th}$ outer loop we sample a random partition $[n]= \mathcal{P}_0^m \cup \cdots \cup \mathcal{P}_{k-1}^m$, $\abs{\mathcal{P}_i} = \ell$ for $i=0,\dots,k-1$ independently at random, and then process the indices of the sets $\mathcal{P}_i$ the $(m+i)^{th}$ outer loop (to not clutter the nation we assumed here $n=k\ell$). We give the pseudocode for $k_2$-SVRG in Appendix~\ref{app:pseudo_prac}. 
\end{description}
\begin{remark}[Implementation]\label{rem:implementation}
One of the main advantages of $k$-SVRG is that no full pass over the data is required at the end of an outer loop. The update of $\tilde{x}^{m+1}$ in line~\ref{lst:update_tilde_x} can be computed on the fly with the help of an extra variable. To implement the update of the $\theta_i$'s on line~\ref{ln:theta} we use the compressed representation of the set $\Theta$ as discussed above. The update of $\bar{\alpha}^{m+1}$ in line~\ref{lst:baralpha} requires $2\ell$ gradient computations for \emph{$k$-SVRG-V2}, but only $\ell$ for \emph{$k$-SVRG-V1}, as
 \begin{align}
  \frac{1}{n} \sum_{i\in \Phi^m} \nabla f_i(\theta_i^{m})  =  \frac{1}{n} \sum_{i\in \Phi^m} \alpha_i^m\,. \label{eq:alpha_bar_update}
 \end{align}
for computed values $\alpha_i^m$ for $i \in \Phi^m$.
 \end{remark}

\section{Theoretical Analysis } \label{sec:theory}
In this section, we  provide the theoretical analysis for the proposed algorithms from the previous section. We will first discuss the convergence in the convex case in Section~\ref{subsec:convex_th} and then later will discuss the convergence in the non-convex setting in Section~\ref{subsec:nonconvex_th}. 
For both cases we will assume
that the functions  $f_i$, $i \in [n]$, are $L$-smooth.
Let us recall the definition: 
A function $f \colon \R^d \to \R$ is $L$-smooth if it is differentiable and
\begin{align}
 \| \nabla f(x) - \nabla f(y) \| \leq L \|x - y \|\,, ~~\forall x,y \in \mathbb{R}^d. \label{eq:beta_smooth}
\end{align}

\subsection{Strongly Convex Problems}  \label{subsec:convex_th}

In this subsection we additionally assume $f$ to be $\mu$-strongly convex for $\mu > 0$, i.e. we assume it holds:
\begin{align}
f(y) \geq f(x) + \lin{\nabla f(x), y-x} + \frac{\mu}{2} \norm{y-x}^2\,, ~~\forall x,y \in \mathbb{R}^d. \label{def:strongc}
\end{align}
It will also become handy to denote $f^\delta(x) := f(x) - f(x^\star)$, following the notation in~\citep{hofmann2015variance}. 

\paragraph{Lyapunov Function.} 
Similar as in~\citep{defazio2014saga} and~\citep{hofmann2015variance}, we show convergence of the algorithm by studying a suitable Lyapunov function. In fact, we are using the same family of functions as in~\citep{hofmann2015variance} where $\mathcal{L} \colon \R^n \times \R \to \R$ is defined
as follows:
\begin{align}
\mathcal{L}(x, H) := \|x - x^\star \|^2 + \gamma \sigma H\,,  \label{def:lyap}
\end{align}
with $\gamma := \frac{\eta n}{L}$ and $0 \leq \sigma \leq 1$ a constant parameter that we will set later. We will evaluate this function at tuples $(x^m, H^m)$, where $x^m=x_0^m$ are the iterates of the algorithm. In order to show convergence we therefore also need to define a sequence of parameters $H^m$ that are updated in sync with $x^m$. Clearly, if $H^m \to 0$ for $m \to \infty$, then convergence of $\mathcal{L}(x^m,H^m) \to 0$ implies $x^m \to x^\star$. We will now proceed to define a sequence $H^m$ with this property. It is important to note that these quantities do only show up in the analysis, but neither need to be be computed nor updated by the algorithm.

Similar as in~\citep{hofmann2015variance}, we will define quantities $H^m_i$ with the property $H^m_i \geq \| \alpha^m_i - \nabla f_i(x^\star) \|^2$, and thus their sum,  
$H^m := \frac{1}{n}\sum_{i=1}^n H^m_i$ is an upper bound on $\bbE \| \alpha_i^m - \nabla f_i (x^\star) \|^2$. 
Let us now proceed to precisely define $H^m_i$.
For this let $h_i^m \colon \R^d \to \R$ be defined as
\begin{align}
 h_i^m (x) := f_i(x) - f_i(x^\star) - \lin{x-x^\star,  \nabla f_i(x^\star)} \,.
\end{align}
We initialize (conceptually) $\alpha_i^0 = 0$ and $H^0_i = \norm{\nabla f_i(x^\star)}^2$ for $i \in [n]$, and then update the bounds
 $H^m_i$ in the following manner:
\begin{align}
H^{m+1}_i = \begin{cases}
    2 L h^m_i(\tilde{x}^{m+1}), & \text{if $i  \in \Phi^m$},\\
    H^m_i, & \text{otherwise}.
  \end{cases} \label{eq:updateH}
\end{align}
Here $\Phi^m$ denotes the set of indices that are used to compute $\tilde{x}^{m+1}$ in either \mVone{} or \mVtwo, see Algorithm~\ref{algo:svrg_saga_1_2}.

\paragraph{Convergence Results.}
We now show the linear convergence of \mVone{} (Theorem~\ref{thm:var_1_mu}) and \mVtwo{} (Theorem~\ref{thm:var_2_mu_gen}).
\begin{theorem} \label{thm:var_2_mu_gen}
Let $\{x^m\}_{m \geq 0}$ denote the iterates in the outer loop of \emph{\mVtwo($q$)}. If $\mu > 0$, parameter $q \geq \frac{\ell}{3}$, and step size $\eta \leq \frac{1}{3(\mu n+ 2L)}$ then
\begin{align}
\bbE_{q,m}' \mathcal{L}(x^{m+1}, H^{m+1}) \leq \big( 1 - \eta \mu \big)^{\ell} \mathcal{L}(x^{m}, H^{m})\,.   \label{eq:eq_thm_1}
\end{align}
\end{theorem}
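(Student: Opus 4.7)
The plan is to analyze a single outer loop of $k$-SVRG-V2($q$) in two stages: first bound the inner-loop progress on $\|x^m_t - x^\star\|^2$, then bound the expected update of $H^{m+1}$, and finally combine so that the cross terms in $f^\delta(\tilde{x}^{m+1})$ cancel. The choice of the weighted average in \eqref{eq:average} is exactly what will make the telescoping work out cleanly.

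\textbf{Step 1 (one-step inner bound).} Starting from $x^m_{t+1} = x^m_t - \eta g^m_{i_t}(x^m_t)$ and expanding $\|x^m_{t+1}-x^\star\|^2$, I would take the conditional expectation over $i_t$. Using $\mathbb{E}_t g^m_{i_t}(x^m_t) = \nabla f(x^m_t)$ together with strong convexity and smoothness, the inner product term yields $\langle \nabla f(x^m_t), x^m_t - x^\star\rangle \ge f^\delta(x^m_t) + \frac{\mu}{2}\|x^m_t-x^\star\|^2$. For the squared norm term $\mathbb{E}_t\|g^m_{i_t}(x^m_t)\|^2$ I would use the standard variance decomposition plus the smoothness/convexity bound $\|\nabla f_i(x) - \nabla f_i(x^\star)\|^2 \le 2L(f_i(x)-f_i(x^\star)-\langle\nabla f_i(x^\star),x-x^\star\rangle)$, so that variance is controlled by $4L f^\delta(x^m_t) + 2 H^m$ (with $H^m = \tfrac{1}{n}\sum_i H^m_i$). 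Putting this together yields a recursion of the form
\begin{equation*}
\mathbb{E}_t \|x^m_{t+1}-x^\star\|^2 \;\le\; (1-\eta\mu)\|x^m_t-x^\star\|^2 - 2\eta(1-2\eta L) f^\delta(x^m_t) + 2\eta^2 H^m.
\end{equation*}

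\textbf{Step 2 (telescoping with the weights $(1-\eta\mu)^{\ell-1-t}$).} I would multiply the one-step bound by $(1-\eta\mu)^{\ell-1-t}$ and sum over $t=0,\dots,\ell-1$. The $\|x^m_t-x^\star\|^2$ terms telescope, producing a clean $(1-\eta\mu)^\ell\|x^m-x^\star\|^2$ contraction. The linear-in-$f^\delta$ terms collect with the exact coefficients in \eqref{eq:average}, so by convexity and Jensen $\sum_t (1-\eta\mu)^{\ell-1-t} f^\delta(x^m_t) \ge S_\ell f^\delta(\tilde{x}^{m+1})$. The $H^m$ error collects with coefficient $2\eta^2 S_\ell$. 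This gives
\begin{equation*}
\mathbb{E}_m\|x^{m+1}-x^\star\|^2 \;\le\; (1-\eta\mu)^\ell\|x^m-x^\star\|^2 - 2\eta(1-2\eta L)S_\ell\, \mathbb{E}_m f^\delta(\tilde{x}^{m+1}) + 2\eta^2 S_\ell H^m.
\end{equation*}

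\textbf{Step 3 (update of $H^{m+1}$ for V2).} In V2 the set $\Phi^m$ is an independent uniform sample of $q$ indices without replacement, so for each $i$ we have $\Pr[i\in\Phi^m] = q/n$. From \eqref{eq:updateH} and $\nabla f(x^\star) = 0$,
\begin{equation*}
\mathbb{E}'_q H^{m+1} \;=\; \frac{q}{n}\cdot 2L\cdot \frac{1}{n}\sum_{i=1}^n h^m_i(\tilde{x}^{m+1}) + \Bigl(1-\frac{q}{n}\Bigr)H^m \;=\; \frac{2Lq}{n} f^\delta(\tilde{x}^{m+1}) + \Bigl(1-\frac{q}{n}\Bigr) H^m,
\end{equation*}
using that $\tfrac{1}{n}\sum_i h^m_i(x) = f^\delta(x)$.

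\textbf{Step 4 (combining and choosing $\sigma$).} Adding the two pieces with weight $\gamma\sigma$ on $H$, I need
\begin{equation*}
-2\eta(1-2\eta L)S_\ell + \gamma\sigma\,\frac{2Lq}{n} \;\le\; 0 \quad\text{and}\quad 2\eta^2 S_\ell + \gamma\sigma\Bigl(1-\frac{q}{n}\Bigr) \;\le\; (1-\eta\mu)^\ell\,\gamma\sigma.
\end{equation*}
Plugging $\gamma = \eta n/L$, the first inequality determines $\sigma \le (1-2\eta L)S_\ell/q$; the second, using $(1-\eta\mu)^\ell \le 1-\eta\mu S_\ell/(1 + \text{small})$ and $1 - q/n \approx 1 - \ell/(3n)$ at the boundary, reduces to a condition relating $\eta$, $\mu$, $L$, $n$ and $q/\ell$. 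Here the hypothesis $q \ge \ell/3$ gives enough ``fresh mass'' in $\Phi^m$ so that the $H^m$ coefficient contracts at rate $(1-\eta\mu)^\ell$; and the step-size restriction $\eta \le 1/(3(\mu n + 2L))$ is exactly what makes $1-2\eta L \ge 1/3$ and keeps $\eta^2 S_\ell$ absorbed by the $H$ contraction.

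\textbf{Main obstacles.} The numerical balancing in Step 4 is the real work: one has to pick $\sigma$ so that both inequalities become simultaneously satisfiable under the stated bounds on $q$ and $\eta$, and check that the approximation $(1-\eta\mu)^\ell - (1-q/n) \ge c\,\eta^2 S_\ell L/n \cdot (1/\sigma)$ holds cleanly for all valid $\mu\ge 0$ and all $\ell$. A secondary subtlety is that in V2 the sample $\Phi^m$ is independent of the inner-loop randomness, so Step 3's expectation commutes with Step 2's — the cleaner treatment relative to V1, where $\Phi^m$ is the set of indices actually used in the inner loop and the $H^m_i$ update is correlated with the trajectory.
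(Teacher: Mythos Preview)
Your proposal is correct and follows essentially the same route as the paper. Your Steps~1--2 reproduce Lemma~\ref{lem:similar_hoff} (via the one-step bound of \citet{hofmann2015variance}), your Step~3 is exactly the V2 case of Lemma~\ref{lem:lemma2}, and your Step~4 sets up the same two constraints the paper calls Conditions~1 and~2; the paper then finishes by exhibiting a concrete $\sigma$ (depending on $q/\ell$, $n$, $L/\mu$) under which both hold for $\eta$ in the stated range, which is precisely the ``numerical balancing'' you flag as the remaining work.
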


\begin{proof}[Proof Sketch]
By applying Lemmas~\ref{lem:similar_hoff} and \ref{lem:lemma2}, we directly get the following relation:
\begin{gather}
\bbE_m \|x^{m+1} - x^\star \|^2 + \gamma \sigma \bbE_{q,m}' H^{m+1}  
\leq (1-\eta\mu)^{\ell} \|x^m -x^\star \|^2  + p_2 H^m - r_2 \bbE_m f^\delta (\tilde{x}^{m+1}) \,, 
\end{gather}
where $p_2$ and $r_2$ are constants that will be specified in the proof. From this expression 
it becomes clear that we get the statement of the theorem if we can ensure $p_2 \leq (1 -\eta\mu)^\ell $ and $r_2 \geq 0$.
These calculations will be detailed in the proof in Appendix~\ref{app:proof_convex}. 
\end{proof}

\begin{theorem}\label{thm:var_1_mu}
Let $\{x^m\}_{m \geq 0}$ denote the iterates in the outer loop of \emph{\mVone}. If $\mu > 0$, and step size $\eta \leq \frac{2\big(1 -\frac{\ell -1}{2n}\big)}{5(\mu n+ 2L)} < \frac{1}{5(\mu n+ 2L)} $ then 
\begin{align}
\bbE_m \mathcal{L}(x^{m+1}, H^{m+1}) \leq ( 1 - \eta \mu )^{\ell} \mathcal{L}(x^{m}, H^{m})\,.  \label{eq:eq_thm_2}
\end{align}
\end{theorem}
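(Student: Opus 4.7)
The plan is to follow the same high-level template as the proof sketch for Theorem~\ref{thm:var_2_mu_gen}: combine a one-outer-loop bound on $\bbE_m \|x^{m+1} - x^\star\|^2$ with a bound on $\bbE_m H^{m+1}$, then pick $\sigma \in (0,1]$ so that in the resulting inequality the coefficient of $H^m$ is at most $(1-\eta\mu)^\ell \gamma \sigma$ and the coefficient of $\bbE_m f^\delta(\tilde{x}^{m+1})$ is non-positive. The only substantive difference from V2 is the sampling convention for $\Phi^m$: in V1, $\Phi^m = \{i_0, \ldots, i_{\ell-1}\}$ is determined by the inner loop choices and is therefore coupled with the iterates (and with $\tilde{x}^{m+1}$), whereas in V2 it is resampled independently of the inner loop.

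For the iterate bound, the per-step inequality from Lemma~\ref{lem:similar_hoff} is about a single stochastic gradient step using the stored $\alpha_i^m$, and is therefore independent of how $\Phi^m$ is eventually chosen. Telescoping it over $t = 0, \ldots, \ell-1$ with weights $(1-\eta\mu)^{\ell-1-t}$ (so the $f^\delta$ term collapses to $f^\delta(\tilde{x}^{m+1})$ via~\eqref{eq:average}) yields a bound of the form
\[
\bbE_m \|x^{m+1} - x^\star\|^2 \leq (1-\eta\mu)^\ell \|x^m - x^\star\|^2 + a\, H^m - b\, \bbE_m f^\delta(\tilde{x}^{m+1}),
\]
with explicit $a, b > 0$ identical to those used in the V2 proof.

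The V1-specific step is bounding $\bbE_m H^{m+1}$. For each fixed index $i$, $P(i \in \Phi^m) = 1 - (1-1/n)^\ell$. Using the update~\eqref{eq:updateH} and the non-negativity of $h_i^m$ to pass to the crude bound $\mathbf{1}[i \in \Phi^m]\, h_i^m(\tilde{x}^{m+1}) \leq h_i^m(\tilde{x}^{m+1})$ inside the expectation, one gets $\bbE_m H_i^{m+1} \leq 2L\, \bbE_m h_i^m(\tilde{x}^{m+1}) + (1-1/n)^\ell H_i^m$. Averaging over $i$ and using $\tfrac{1}{n}\sum_i h_i^m(x) = f^\delta(x)$ (which follows from $\nabla f(x^\star)=0$) gives
\[
\bbE_m H^{m+1} \leq 2L\, \bbE_m f^\delta(\tilde{x}^{m+1}) + (1-1/n)^\ell H^m.
\]
Compared to the V2 bound, the prefactor of $f^\delta(\tilde{x}^{m+1})$ is $2L$ rather than $2Lq/n$; this is the quantitative price of the coupling and is exactly what forces a slightly stricter step-size condition.

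Combining the two bounds with weight $\gamma\sigma$ on the second, the conclusion of the theorem reduces to checking that (i)~$a + \gamma\sigma(1-1/n)^\ell \leq \gamma\sigma(1-\eta\mu)^\ell$ and (ii)~$2L\gamma\sigma \leq b$ admit a joint solution $\sigma \in (0,1]$. Using the Bonferroni-type refinement $(1-1/n)^\ell \leq 1 - (\ell/n)\bigl(1 - (\ell-1)/(2n)\bigr)$ together with $(1-\eta\mu)^\ell \geq 1 - \eta\mu\ell$, the gap $(1-\eta\mu)^\ell - (1-1/n)^\ell$ contains the factor $(\ell/n)(1-(\ell-1)/(2n)) - \eta\mu\ell$; this is precisely where the $2(1-(\ell-1)/(2n))$ in the stated step-size bound originates. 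A direct scalar manipulation then shows that $\eta \leq \frac{2(1-(\ell-1)/(2n))}{5(\mu n + 2L)}$ is sufficient for both (i) and (ii) with an appropriate $\sigma$. The main obstacle is the clean handling of the coupling in the $H^{m+1}$ bound: the crude drop $\mathbf{1}[i \in \Phi^m] \leq 1$ loses a factor of $q/n$ relative to V2, but it is tight enough that the subsequent bookkeeping of first- and second-order terms in $\ell/n$ closes the recursion in exactly the claimed form.
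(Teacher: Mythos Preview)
Your overall plan matches the paper's: combine Lemma~\ref{lem:similar_hoff} with a recursion on $\bbE_m H^{m+1}$, then pick $\sigma$ so that the $H^m$ coefficient contracts by $(1-\eta\mu)^\ell$ and the $f^\delta(\tilde x^{m+1})$ coefficient is non-positive. The gap is in your bound on $\bbE_m H^{m+1}$.

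Dropping $\mathbf 1[i\in\Phi^m]\le 1$ gives
\[
\bbE_m H^{m+1}\le 2L\,\bbE_m f^\delta(\tilde x^{m+1})+\Bigl(1-\tfrac1n\Bigr)^\ell H^m,
\]
whereas the paper (Lemma~\ref{lem:lemma2}, case~\mVone) obtains the much sharper prefactor $\tfrac{2LQ_\ell}{n}=2L\bigl(1-(1-1/n)^\ell\bigr)\approx \tfrac{2L\ell}{n}$ via a step-by-step recursion on auxiliary quantities $H_i^{m,t}$. Your coefficient is larger by a factor $\approx n/\ell$, not $n/q$ as you write; this is \emph{not} a slight loss. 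Plugging your bound into condition~(ii) (``$2L\gamma\sigma\le b$'') with $\gamma=\eta n/L$ and $b=2\eta(1-2L\eta)S_\ell$ forces
\[
\sigma\le \frac{(1-2L\eta)S_\ell}{n}\le \frac{\ell}{n}.
\]
On the other hand, condition~(i) together with the elementary estimate $(1-\eta\mu)^\ell-(1-1/n)^\ell\le \ell/n$ forces
\[
\sigma\ge \frac{2\eta L S_\ell}{n\bigl[(1-\eta\mu)^\ell-(1-1/n)^\ell\bigr]}\ge \frac{2\eta L S_\ell}{\ell}.
\]
Compatibility of these two constraints requires $\eta\le \tfrac{\ell}{2L(n+\ell)}$, which for $\ell\ll n$ is far smaller than the stated bound $\eta\le\tfrac{2(1-(\ell-1)/(2n))}{5(\mu n+2L)}$; e.g.\ for $\ell=1$ and $\mu n\ll L$ you are off by a factor $\Theta(n)$. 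So the crude drop does \emph{not} close the recursion at the claimed step size.

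The paper avoids this by not decoupling $\mathbf 1[i\in\Phi^m]$ from $h_i^m(\tilde x^{m+1})$ via the trivial bound. Instead it tracks, one inner step at a time, how the average $\tfrac1n\sum_i H_i^{m,t}$ evolves (each step replaces a uniformly random coordinate), obtaining the prefactor $2LQ_\ell/n$; with that coefficient condition~(ii) becomes $(1-2L\eta)S_\ell\ge \sigma Q_\ell$, which allows $\sigma$ of order~$1$, and then conditions~(i)--(ii) are jointly satisfiable exactly at the stated step size. To repair your argument you need this sharper recursion (i.e.\ the \mVone{} case of Lemma~\ref{lem:lemma2}), not the indicator drop.
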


\begin{proof}
The proof of Theorem~\ref{thm:var_1_mu} is very similar to the one of Theorem~\ref{thm:var_2_mu_gen}. A detailed proof is provided in the Appendix~\ref{app:proof_convex}.
\end{proof}

\noindent Let us state a few observations:
\begin{remark}[Convergence rate]\label{rem:rate}  Both results show convergence at a linear rate. The convergence factor $(1-\eta \mu)$ is the same that appears also in the convergence rates of SVRG and SAGA. For SAGA a decrease by this factor can be show in every iteration for the corresponding Lyapunov function. Thus, after $\ell$ steps, SAGA achieves a decrease of $(1-\eta \mu)^\ell$, i.e. of the same order\footnote{Note, the decrease is not exactly identical if different stepsizes are used.} as $k$-SVRG. On the other hand, the proof for SVRG shows decrease by a constant factor after $\kappa$ iterations. The same improvement is attained by $k$-SVRG after $\min\{\lceil n/\ell\rceil,\lceil \kappa/\ell \rceil\}$ inner loops, i.e. $\min\{n,\kappa\}$ total updates. Hence, our rates do not fundamentally differ from the rates of SVRG and SAGA (in case $n \gg \kappa$ we even improve compared to the former method), but they provide an interpolation between both results.
\end{remark}

\begin{remark}[Relation to SVRG]
For $k=1$ and $q = \ell = n$, our algorithms resemble SVRG with geometric averaging. However, our proof gives the flexibility to prove convergence of SVRG with inner loop size $n$, instead of $\Omega(n+\kappa)$ as in~\cite{johnson2013accelerating}. The analysis of SVRG is further strengthened in many subtle details, for instance we don’t require $x^m = \tilde{x}^m$ as in vanilla SVRG, we have shorter stalling phases (for $k \gg 1$) and the possibility to choose $q$ and $\ell$ differently opens more possibilities for tuning.
\end{remark}

\begin{remark}[Relation to SAGA]
In SAGA, exactly one snapshot point is updated per iteration. The same number of updates are performed (on average) per iteration for the setting $q = \ell$. 
\citet{hofmann2015variance} study a variant of SAGA that performs more updates per iteration ($q \geq \ell$), but there was no proposal of choosing $q < \ell$.
\end{remark}
\begin{remark}[Dependence of the convergence rate on $q$ and $k$]
For ease of presentation we have state here the convergence results in a simplified way, omitting dependence on $k$ entirely (see also Remark~\ref{rem:rate}). 
However, some mild dependencies can be extracted from the proof. For instance, it is intuitively clear that choosing a larger $q$ in Theorem~\ref{thm:var_2_mu_gen} should yield a better rate. This is indeed true. Moreover, also setting $q < \ell/3$ smaller will still give linear convergence, but at a lower rate. 
For our application we aim to choose $q$ as small as possible (reducing computation), without sacrificing too much in the convergence rate.
\end{remark}

In the rest of this subsection, we will give some tools that are required to prove Theorems~\ref{thm:var_2_mu_gen} and~\ref{thm:var_1_mu}. The proof of both statements is given in Appendix~\ref{app:proof_convex}. Lemma~\ref{lem:similar_hoff} establishes a recurrence relation between subsequent iterates in the outer loop.

\begin{lemma} \label{lem:similar_hoff}
Let $\{x^m\}_{m \geq 0}$ denote the iterates in the outer loop of Algorithm~\ref{algo:svrg_saga_1_2}. Then it holds:
\begin{align}
 \label{eq:lemma1_eq}
\bbE_{m}  \norm{x^{m+1}_{0} - x^\star}^2 \leq  (1 - \eta\mu)^{\ell} \norm{x^{m}_{0} - x^\star}^2  - 2\eta(1-2 L \eta) S_\ell  \bbE_{m} \left[ f^\delta(\tilde{x}^{m+1})\right] + 2 \eta^2   S_{\ell} \bbE_{\{i\}} \| \alpha_i^m -\nabla  f_i(x^\star) \|^2 \,,
\end{align}
where  $\tilde{x}^{m+1} = \frac{1}{S_{\ell}}\sum_{t = 0}^{\ell -1}  (1 - \eta \mu)^{\ell -1 -t} x^m_{t} $ and $S_{\ell}  = \sum_{t = 0}^{\ell -1} (1-\eta\mu)^t$.
\end{lemma}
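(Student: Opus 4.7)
The plan is to first derive a one-step recurrence for $\|x^m_{t+1} - x^\star\|^2$ inside an inner loop, and then telescope it with geometric weights matching the definition of $\tilde{x}^{m+1}$.

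For the one-step analysis, I would expand
\[
\|x^m_{t+1} - x^\star\|^2 = \|x^m_t - x^\star\|^2 - 2\eta \langle g^m_{i_t}(x^m_t), x^m_t - x^\star\rangle + \eta^2 \|g^m_{i_t}(x^m_t)\|^2
\]
and take the conditional expectation with respect to $i_t$. Since $\mathbb{E}_{i_t}[g^m_{i_t}(x^m_t)] = \nabla f(x^m_t)$ (the bias terms $\alpha^m_{i_t}$ and $\bar{\alpha}^m$ cancel in expectation), the cross term becomes $-2\eta \langle \nabla f(x^m_t), x^m_t - x^\star\rangle$, which I bound by $-2\eta f^\delta(x^m_t) - \eta \mu \|x^m_t - x^\star\|^2$ using $\mu$-strong convexity.

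The key estimate is the variance bound on $g^m_{i_t}(x^m_t)$. I would split
\[
\mathbb{E}_{i_t}\|g^m_{i_t}(x^m_t)\|^2 \leq 2\,\mathbb{E}_{i_t}\|\nabla f_{i_t}(x^m_t) - \nabla f_{i_t}(x^\star)\|^2 + 2\,\mathbb{E}_{i_t}\|\alpha^m_{i_t} - \nabla f_{i_t}(x^\star)\|^2,
\]
using that for any random vector $Y$ one has $\mathbb{E}\|Y - \mathbb{E}Y\|^2 \leq \mathbb{E}\|Y\|^2$ applied to $Y = \alpha^m_{i_t} - \nabla f_{i_t}(x^\star)$ (whose mean is $\bar{\alpha}^m - \nabla f(x^\star) = \bar{\alpha}^m$). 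Then $L$-smoothness combined with convexity of $f_{i_t}$ gives $\mathbb{E}_{i_t}\|\nabla f_{i_t}(x^m_t) - \nabla f_{i_t}(x^\star)\|^2 \leq 2L\, f^\delta(x^m_t)$. Together with the strong-convexity step this yields the one-step inequality
\[
\mathbb{E}_{i_t}\|x^m_{t+1} - x^\star\|^2 \leq (1-\eta\mu)\|x^m_t - x^\star\|^2 - 2\eta(1-2L\eta) f^\delta(x^m_t) + 2\eta^2\,\mathbb{E}_{\{i\}}\|\alpha^m_i - \nabla f_i(x^\star)\|^2,
\]
where the last term is independent of $t$ because the snapshot points $\alpha^m_i$ are frozen throughout the $m$-th inner loop.

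Then I would unroll this recurrence for $t = 0, 1, \ldots, \ell-1$, taking full expectation $\mathbb{E}_m$. With $\rho := 1-\eta\mu$, after $\ell$ iterations the distance term contributes $\rho^\ell \|x^m - x^\star\|^2$; the additive variance term accumulates with weights $\sum_{t=0}^{\ell-1} \rho^{\ell-1-t} = S_\ell$; and the progress term becomes $-2\eta(1-2L\eta)\sum_{t=0}^{\ell-1} \rho^{\ell-1-t}\, \mathbb{E}_m f^\delta(x^m_t)$. The final step is to invoke Jensen's inequality on the convex function $f^\delta$:
\[
\sum_{t=0}^{\ell-1} \rho^{\ell-1-t}\, f^\delta(x^m_t) \geq S_\ell\, f^\delta\!\left(\tfrac{1}{S_\ell}\sum_{t=0}^{\ell-1} \rho^{\ell-1-t} x^m_t\right) = S_\ell\, f^\delta(\tilde{x}^{m+1}),
\]
which exactly produces the middle term in~\eqref{eq:lemma1_eq}.

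The main obstacle is getting the geometric weights to line up correctly so that the averaged point that falls out of Jensen is precisely the weighted average $\tilde{x}^{m+1}$ used in the algorithm; this is the reason strong convexity must be used to generate a factor of $(1-\eta\mu)$ rather than being released as an $\mathcal{O}(\mu)$ subtraction, and why $S_\ell$ (not $\ell$) appears as the normalization. A minor bookkeeping point is the noise term: since $\alpha^m_i$ does not depend on $t$, it naturally factors out of the sum and only picks up a factor $S_\ell$, matching the stated bound.
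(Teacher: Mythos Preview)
Your proposal is correct and follows essentially the same approach as the paper: the paper invokes a one-step lemma from \cite{hofmann2015variance} (their Lemma~\ref{lem:hoffman_lem}) to obtain exactly the inequality you derive by hand, then telescopes with geometric weights and applies Jensen's inequality on $f^\delta$ to produce $\tilde{x}^{m+1}$, just as you outline. The only difference is cosmetic---you spell out the derivation of the one-step bound (variance splitting, $\bbE\|Y-\bbE Y\|^2\le\bbE\|Y\|^2$, and the smoothness/convexity estimate $\bbE_i\|\nabla f_i(x)-\nabla f_i(x^\star)\|^2\le 2L f^\delta(x)$) whereas the paper imports it as a black box.
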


We further need to bound the expression  $\| \alpha_i^m - \nabla f_i (x^\star) \|^2$ that appears in the right hand side of equation~\eqref{eq:lemma1_eq}.
Recall that we have already introduced bounds $H_i^m \geq \| \alpha_i^m - \nabla f_i (x^\star) \|^2$ for this purpose.
We now follow closely the machinery that has been developed in~\citep{hofmann2015variance} in order to show how these bounds  decrease (in expectation) from one iteration to the next.
\begin{lemma}\label{lem:lemma2}
Let the sequence $\{H^m\}_{m \geq 0}$ be defined as in Section~\ref{subsec:convex_th} and updated according to equation~\eqref{eq:updateH} and let $\{\tilde{x}^m\}_{m \geq 0}$ denote the sequence of snapshot points in Algorithm~\ref{algo:svrg_saga_1_2}. Then it holds:
\begin{align}
\mathbb{E}_m H^{m+1} &=   \frac{2L Q_\ell}{n} \mathbb{E}_{m} f^\delta (\tilde{x}^{m+1})  +\Big( 1 - \frac{1}{n}\Big)^\ell  H^m\,,  & &\text{(for  \mVone)} \label{eq:rec_H_case1} \\
\mathbb{E}_{q,m}' H^{m+1} &=  \frac{2L q}{n} \bbE_{m}  f^\delta(\tilde{x}^{m+1})  + \Big( 1 - \frac{q}{n}\Big) H^m\,, & & \text{(for \mVtwo)} \label{eq:rec_H_case2}
\end{align}
where $Q_\ell = \sum_{t = 0}^{\ell -1} \Big( 1 - \frac{1}{n} \Big)^{t}$. 
\end{lemma}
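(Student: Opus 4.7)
The plan is to start from the componentwise update~\eqref{eq:updateH}, average over $i$, and then handle the two pieces separately. Writing
\begin{align*}
H^{m+1} = \frac{2L}{n} \sum_{i=1}^n \mathbb{1}[i \in \Phi^m] \, h_i(\tilde{x}^{m+1}) + \frac{1}{n} \sum_{i=1}^n \mathbb{1}[i \notin \Phi^m] \, H_i^m,
\end{align*}
the second (``stay-put'') sum is the same in both variants: the $H_i^m$ are measurable with respect to the outer loops $\leq m-1$ and hence independent of $\Phi^m$, so we can pull out $\mathbb{E}[\mathbb{1}[i \notin \Phi^m]]$. For \mVone{} the $i_t$ are i.i.d.\ uniform and $P(i \notin \Phi^m) = (1-1/n)^\ell$; for \mVtwo{} the uniform $q$-subset gives $P(i \notin \Phi^m) = 1-q/n$. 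Summing over $i$ and dividing by $n$ produces the last terms of~\eqref{eq:rec_H_case1} and~\eqref{eq:rec_H_case2}.

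The first (``refresh'') sum is the real work. For \mVtwo{} it is easy: $\Phi^m$ is drawn after the inner loop, so it is independent of $\tilde{x}^{m+1}$. Conditioning on the inner loop, $\mathbb{E}_q'[\mathbb{1}[i \in \Phi^m]] = q/n$ factors out, and then $\frac{1}{n} \sum_i h_i(\tilde{x}^{m+1}) = f^\delta(\tilde{x}^{m+1})$ (since $\nabla f(x^\star)=0$) together with the outer $\mathbb{E}_m$ delivers~\eqref{eq:rec_H_case2}.

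For \mVone{} the set $\Phi^m = \{i_0,\ldots,i_{\ell-1}\}$ shares its randomness with $\tilde{x}^{m+1}$, so no direct factorization works. I would reorganize by first-occurrence times,
\begin{align*}
\sum_{i \in \Phi^m} h_i(\tilde{x}^{m+1}) = \sum_{t=0}^{\ell-1} h_{i_t}(\tilde{x}^{m+1}) \, \mathbb{1}[i_t \notin \{i_0,\ldots,i_{t-1}\}],
\end{align*}
and argue that for each $t$ the marginal probability of a fresh pick is $(1-1/n)^t$ (condition on $i_0,\ldots,i_{t-1}$, use independence of $i_t$, and take expectation of $|\{i_0,\ldots,i_{t-1}\}|/n$). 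The goal is to show that $\mathbb{E}_m[h_{i_t}(\tilde{x}^{m+1})\,\mathbb{1}[i_t \text{ fresh}]] = (1-1/n)^t \mathbb{E}_m f^\delta(\tilde{x}^{m+1})$; summing over $t$ then produces $Q_\ell \mathbb{E}_m f^\delta(\tilde{x}^{m+1})$, which after the $2L/n$ prefactor yields~\eqref{eq:rec_H_case1}.

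The main obstacle is exactly that last step for \mVone{}: disentangling the index $i_t$ from the weighted average $\tilde{x}^{m+1}$ that depends on $i_t$ through $x_{t+1}^m,\ldots,x_\ell^m$. I expect this to need either a symmetry/exchangeability argument over $i_t$ conditional on the remaining indices, or convexity of $h_i$ applied to $\tilde{x}^{m+1} = S_\ell^{-1}\sum_s (1-\eta\mu)^{\ell-1-s} x_s^m$ to replace $h_{i_t}(\tilde{x}^{m+1})$ by a mixture of $h_{i_t}(x_s^m)$, where $x_s^m$ for $s\leq t$ is independent of $i_t$ and the remaining terms can be bounded. All other steps (handling the stay-put term, the V2 refresh, and assembling the final formulas) are routine bookkeeping.
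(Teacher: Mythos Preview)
Your treatment of the stay-put term and of the \mVtwo{} refresh term is correct and coincides with the paper's argument: in both places the indicator $\mathbb{1}[i\in\Phi^m]$ is independent of the quantity it multiplies, so the probability factors out and the sums collapse to $(1-1/n)^\ell H^m$, $(1-q/n)H^m$, and $\frac{2Lq}{n}\,\bbE_m f^\delta(\tilde x^{m+1})$ respectively.

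For the \mVone{} refresh term the paper takes a different route from your first-occurrence decomposition. It freezes the snapshot at a deterministic point: define $H^{m+1}(x)=\frac{1}{n}\sum_{i\notin\Phi^m}H_i^m+\frac{2L}{n}\sum_{i\in\Phi^m}h_i(x)$, and via a one-step recursion in $t$ shows that for every fixed $x$,
\[
\bbE_{\ell,m}\,H^{m+1}(x)=\frac{2LQ_\ell}{n}\,f^\delta(x)+\Big(1-\tfrac{1}{n}\Big)^\ell H^m .
\]
It then simply ``replaces $x$ by $\tilde x^{m+1}$'' to obtain~\eqref{eq:rec_H_case1}.

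The obstacle you flag is exactly the content of that last substitution. The identity above is proved by averaging over $i_0,\dots,i_{\ell-1}$ with $x$ held fixed; plugging in the random $\tilde x^{m+1}$, which is a function of $i_0,\dots,i_{\ell-2}$, is not a valid operation because $\Phi^m$ and $\tilde x^{m+1}$ share the same randomness. So the paper does not actually resolve the difficulty you identified---it asserts the substitution without justification---and the equality~\eqref{eq:rec_H_case1} as stated is not established. Your proposed remedies do not obviously close the gap either: the sequence $i_0,\dots,i_{\ell-1}$ is exchangeable but $\tilde x^{m+1}$ is not a symmetric function of it, and pushing $h_i$ through the convex combination still leaves $\mathbb{1}[i\in\Phi^m]$ correlated with each $x_s^m$ via $i_0,\dots,i_{s-1}$. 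For the downstream use in Theorem~\ref{thm:var_1_mu} only an upper bound on $\bbE_m H^{m+1}$ is needed, so an inequality would suffice, but obtaining one with the correct constant $\tfrac{2LQ_\ell}{n}$ (rather than the crude $2L$ from $\mathbb{1}[i\in\Phi^m]\le 1$) requires more than what either you or the paper supplies.
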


\subsection{Non-convex Problems} \label{subsec:nonconvex_th}
In this section, we discuss the convergence of the proposed algorithm for non-convex problems. In order to employ Algorithm~\ref{algo:svrg_saga_1_2} on non-convex problems we use the setting $\mu=0$. We limit our analysis for only non-convex smooth functions.

Throughout the section, we assume that each $f_i$ is $L$-smooth~\eqref{eq:beta_smooth}, and provide the convergence rate of algorithm \mVtwo{} only. However, convergence of the algorithm \mVone{} for the non-convex case can be shown in the similar way as for \mVtwo.  The convergence also extends to the class of gradient dominated  functions by standard techniques (cf.~\citep{reddi2016fast,allen2016improved}).
We follow the proof technique from~\citep{reddi2016fast} to provide the theoretical justification of our approach. However, the proof is not  straight forward, due to the difficulty that is imposed by the
block wise update of the snapshot points in $k$-SVRG-V2. 

\paragraph{Lyapunov Function.} 
 For the analysis of our algorithms, we again choose a suitable Lyapunov function similar to the one chosen in~\citep{reddi2016fast}. In the following, let $M$ denote the total number of outer loops performed. For $m=0,\dots,M$ define $\mathcal{L}^m \colon \R^d \times R$ as:
\begin{align}
\mathcal{L}^m(x) :=  f(x) + \frac{c^m}{n} \sum_{i =1}^n \| x^m_0 - \theta^m_i \|^2 \,, \label{eq:lyapunov_non_convex}
\end{align}
where $\{c^m\}_{m=0}^M$ denotes a sequence of parameters that we will introduce shortly (note the superscript indices). By initializing $\theta_i^0 = x^0$ we have $\mathcal{L}^0(x^0) = f(x^0)$. If we define the sequence $\{c^m\}_{m=0}^M$  such that it holds $c^M = 0$ then $\mathcal{L}^M (x^M)= f(x^M)$. These two properties will be exploited in the proof below.

Similar to the previous section, we define quantities $H^m := \frac{1}{n}\sum_{i =1}^n H_i^m$ with $H^m_i := \| x^m_0 - \theta_i^m \|^2$. With this notation we can equivalently write $\mathcal{L}^m(x)=f(x) + c^m H^m$.
We now define the sequence $\{c^m\}_{m=0}^M$ and an auxiliary sequence $\{\Gamma^m\}_{m=1}^M$ that will be used in the proof:
\begin{align}
c^m &:= c^{m+1}\big(1 - \frac{\ell}{n} + \gamma \eta \ell + 4b_1 \eta^2 L^2 \ell^2\big) + 2b_1\eta^2L^3 \ell\,, \label{eq:defc} \\
\Gamma^m &:= \eta -  c^{m+1}\frac{\eta}{\gamma} - b_1 \eta^2 L - 2 b_1 c^{m+1}\eta^2 \ell  \,,
\end{align}
with $b_1 := (1 - 2L^2 \eta^2 \ell^2)^{-1}$ and $\gamma \geq 0$ a parameter that will be specified later. As mentioned, we will set $c^M = 0$ and~\eqref{eq:defc} provides the values of $c^m$ for $m=M-1,\dots,0$.
%
It will be convenient to denote the update in 
the $m^{th}$ outer loop and $t^{th}$ inner loop with $v_t^m$, that is $x_{t+1}^m = x^m_t - \eta v^m_t$. Then we can define a matrix $V^m$ that consists of the columns $v^m_t$ for $t=0,\dots,\ell-1$ and a matrix $\nabla F^m$ that consists of columns $\nabla f(x_t^m)$ for $t=0,\dots, \ell-1$. Here $\| \cdot\|_F$ denotes the Frobenius norm.
By the notation just defined we have 
$\|V^m \|_F^2 = \sum_{t = 0}^{\ell -1} \|v^m_t \|^2$ and by the tower property of conditional expectations 
$\bbE_m\|V^m \|_F^2 = \sum_{t = 0}^{\ell -1} \bbE_{t+1,m}\|v^m_t \|^2$. By similar reasoning 
\begin{align}
\bbE_m\|\nabla F^m \|_F^2 = \sum_{t = 0}^{\ell -1} \bbE_{t+1,m}\|\nabla f(x^m_t) \|^2 = \sum_{t = 0}^{\ell -1} \bbE_{t,m}\|\nabla f(x^m_t) \|^2\,.
\end{align}

\paragraph{Convergence Results.}
 Now we provide the main theoretical result of this subsection. Theorem~\ref{thm:main_theorem_non_convex} shows  sub-linear convergence for non-convex functions.   
\begin{theorem} \label{thm:main_theorem_non_convex}
Let $\{x^m_t\}_{t=0,m = 0}^{\ell-1,M}$ denote the iterates of \emph{\mVtwo}.
Let $\{c^m\}_{m=0}^M$ be defined as in~\eqref{eq:defc} with $c^M = 0$ and $\gamma \geq 0$ and such that $\Gamma^m > 0$ for $m = 0, \dots, M-1$. 
Then:
\begin{align}
\sum_{m=0}^{M-1} \bbE\|\nabla F^m \|^2_F \leq \frac{f(x^0_0) - f^\star}{\Gamma} \,,  \label{eq:thm_main_result1}
\end{align}
where $\Gamma := \min_{0\leq m \leq M-1} \Gamma^m$.
In particular, for parameters $\eta = \frac{1}{5L n^{2/3}}$, $\gamma = \frac{L}{n^{1/3}}$ and $\ell =\frac{3}{2} n^{1/3}$ and $n>15$ it holds:
\begin{align}
\sum_{m=0}^{M-1} \bbE\|\nabla F^m \|^2_F \leq  {15 L n^{2/3} \left( f(x_0^0) - f^\star \right)}\,. 
\end{align}
\end{theorem}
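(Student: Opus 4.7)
The plan is to adapt the Lyapunov-based analysis of \citet{reddi2016fast} to the block-wise snapshot updates of \mVtwo. The central object is $\mathcal{L}^m(x_0^m) = f(x_0^m) + c^m H^m$, and the goal is to prove the per-outer-loop inequality $\bbE\mathcal{L}^{m+1}(x_0^{m+1}) \leq \bbE\mathcal{L}^m(x_0^m) - \Gamma^m \bbE\|\nabla F^m\|_F^2$. Once this is available, telescoping across $m=0,\dots,M-1$ together with $\theta_i^0 = x_0^0$ (so $\mathcal{L}^0(x_0^0) = f(x_0^0)$) and $c^M = 0$ (so $\mathcal{L}^M(x_0^M) = f(x_0^M) \geq f^\star$) immediately yields \eqref{eq:thm_main_result1} with $\Gamma := \min_m \Gamma^m$.

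First I would derive a one-loop descent inequality for $f$. Applying $L$-smoothness to each inner step, taking conditional expectation, and using the unbiasedness $\bbE[v_t^m \mid \mathcal{I}_t^m] = \nabla f(x_t^m)$ yields after summation $\bbE_m f(x_0^{m+1}) \leq f(x_0^m) - \eta \bbE_m\|\nabla F^m\|_F^2 + \tfrac{L\eta^2}{2}\bbE_m\|V^m\|_F^2$. The Frobenius norm $\bbE_m\|V^m\|_F^2$ is then controlled by the standard variance decomposition $\bbE\|v_t^m\|^2 \leq \|\nabla f(x_t^m)\|^2 + L^2\bbE\|x_t^m - \theta_{i_t}^m\|^2$, the triangle bound $\|x_t^m - \theta_{i_t}^m\|^2 \leq 2\|x_t^m - x_0^m\|^2 + 2\|x_0^m - \theta_{i_t}^m\|^2$, and the Cauchy--Schwarz bound $\|x_t^m - x_0^m\|^2 \leq \eta^2 \ell\|V^m\|_F^2$. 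The resulting self-referential inequality in $\bbE_m\|V^m\|_F^2$ solves precisely to $\bbE_m\|V^m\|_F^2 \leq b_1\bbE_m\|\nabla F^m\|_F^2 + 2b_1 L^2 \ell H^m$ with $b_1 = (1-2L^2\eta^2\ell^2)^{-1}$, matching the constant appearing in the theorem.

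Next I would bound $\bbE H^{m+1}$. Since $\Phi^m$ is sampled without replacement with $|\Phi^m| = q = \ell$, each index lies in $\Phi^m$ with probability $\ell/n$, so splitting the sum in the definition of $H^{m+1}$ gives $\bbE'_{q,m} H^{m+1} = \tfrac{\ell}{n}\bbE_m\|x_0^{m+1} - \tilde{x}^{m+1}\|^2 + (1-\tfrac{\ell}{n})\bbE_m\tfrac{1}{n}\sum_i\|x_0^{m+1} - \theta_i^m\|^2$. A Young inequality with parameter $\gamma\eta\ell$ sends $\|x_0^{m+1} - \theta_i^m\|^2$ into $(1+\gamma\eta\ell)\|x_0^m - \theta_i^m\|^2 + (1+(\gamma\eta\ell)^{-1})\|x_0^{m+1} - x_0^m\|^2$, and the residual displacements $\|x_0^{m+1} - x_0^m\|^2$ and $\|x_0^{m+1} - \tilde{x}^{m+1}\|^2$ (the latter being controllable since $\mu=0$ makes $\tilde{x}^{m+1}$ the uniform mean of $\{x_t^m\}$) are both dominated by $\eta^2 \ell\|V^m\|_F^2$ up to absolute constants. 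Combining the two recurrences with weights $1$ and $c^{m+1}$ and inserting the variance bound produces an $H^m$ coefficient of exactly $c^{m+1}(1 - \ell/n + \gamma\eta\ell + 4b_1\eta^2 L^2\ell^2) + 2b_1\eta^2 L^3\ell$, which equals $c^m$ by \eqref{eq:defc}, and an $\bbE\|\nabla F^m\|_F^2$ coefficient of exactly $-(\eta - c^{m+1}\eta/\gamma - b_1\eta^2 L - 2b_1 c^{m+1}\eta^2\ell) = -\Gamma^m$; this engineered cancellation is the whole purpose of the definitions preceding the theorem. For the explicit rate, substituting $\eta = 1/(5Ln^{2/3})$, $\gamma = L/n^{1/3}$, $\ell = \tfrac{3}{2}n^{1/3}$ gives $2L^2\eta^2\ell^2 = 9/50$ so $b_1$ is an absolute constant; the geometric factor in the $c^m$ recursion is $1 - \Theta(n^{-2/3})$, giving $c^m = O(L/n^{1/3})$ uniformly in $m$; and elementary arithmetic then gives $\Gamma \geq 1/(15Ln^{2/3})$ under $n > 15$.

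The main obstacle is the bookkeeping for $\bbE H^{m+1}$ in the block-wise regime. In vanilla SVRG every $\theta_i$ is refreshed to $\tilde{x}^{m+1}$ at once, collapsing $H^{m+1}$ to a single scalar $\|x_0^{m+1} - \tilde{x}^{m+1}\|^2$; here only an $\ell/n$ fraction is refreshed and the stale piece $(1-\ell/n)\tfrac{1}{n}\sum_i \|x_0^{m+1} - \theta_i^m\|^2$ must be expanded via Young's inequality without losing control. The Young parameter $\gamma\eta\ell$ has to be coordinated with the recurrence for $c^m$ so that $\Gamma^m$ stays positive for every $m$ and the $H^m$ coefficients telescope, which is what rigidly determines the scalings $\eta \sim n^{-2/3}$, $\ell \sim n^{1/3}$, $\gamma \sim n^{-1/3}$ in the explicit rate.
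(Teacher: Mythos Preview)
Your overall plan and your variance bound are correct; in fact your bound $\bbE_m\|V^m\|_F^2 \le b_1\,\bbE_m\|\nabla F^m\|_F^2 + 2b_1 L^2\ell\,H^m$ is sharper by a factor of two than what the paper records in Lemma~\ref{lemm:non_convex_lemm2}. The gap is in how you control $H^{m+1}$. Your Young inequality gives
\[
\|x_0^{m+1}-\theta_i^m\|^2 \le (1+\gamma\eta\ell)\,\|x_0^m-\theta_i^m\|^2 + \bigl(1+(\gamma\eta\ell)^{-1}\bigr)\,\|x_0^{m+1}-x_0^m\|^2,
\]
and after $\|x_0^{m+1}-x_0^m\|^2\le \eta^2\ell\,\|V^m\|_F^2$ the second piece contributes roughly $(\eta/\gamma)\,\|V^m\|_F^2$, not $(\eta/\gamma)\,\|\nabla F^m\|_F^2$. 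When you then substitute your variance bound into that $\|V^m\|_F^2$, the $H^m$ coefficient in the Lyapunov recursion picks up an extra term of size $2b_1 c^{m+1}\,\eta L^2\ell/\gamma$. With the stated parameters $\eta L^2\ell/\gamma = 3/10$ is an absolute constant, so this extra term is $\Theta(c^{m+1})$: the multiplier in the $c^m$ recursion jumps from $1-\Theta(n^{-2/3})$ to something exceeding $1$, and $c^m$ no longer stays bounded uniformly in $M$. Hence your assertion that the coefficients equal the paper's $c^m$ and $\Gamma^m$ ``exactly'' is incorrect, and the argument as written does not close with $\gamma = L/n^{1/3}$.

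The paper avoids this by not using the Young inequality at that step. In the proof of Lemma~\ref{lemm:non_convex_lemm1} one expands $\|x_0^{m+1}-\theta_i^m\|^2$ exactly, takes conditional expectation of the cross term $2\langle x_0^m - x_0^{m+1},\, x_0^m-\theta_i^m\rangle$, and uses the unbiasedness $\bbE[x_t^m-x_{t+1}^m\mid\mathcal{I}_t^m]=\eta\,\nabla f(x_t^m)$ to replace $v_t^m$ by $\nabla f(x_t^m)$ \emph{before} bounding the inner product. This produces $(\eta/\gamma)\,\|\nabla F^m\|_F^2$ directly in the $H^{m+1}$ recursion, with no hidden $H^m$ contamination, and only then do the coefficients line up with~\eqref{eq:defc} and permit $\Gamma\ge 1/(15Ln^{2/3})$.
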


\begin{proof}[Proof Sketch]
We need to rely on some technical results that will be presented in Lemmas~\ref{lemm:non_convex_lemm1}, \ref{lemm:non_convex_lemm2} and \ref{lem:non_convex_general_result} below. Equation~\eqref{eq:thm_main_result1} can be readily be derived from Lemma~\ref{lem:non_convex_general_result} by first taking expectation and then using telescopic summation. Since $\Gamma = \min_{0\leq m \leq M-1} \Gamma^m$, we get:
\begin{align}
 \Gamma \sum_{m=0}^{M-1} \bbE \|\nabla F^m \|^2 \leq {\bbE\mathcal{L}^0(x^m_0) - \bbE \mathcal{L}^M(x^{m+1}_0)} \,. 
\end{align}
By setting $\theta_i^0 = x_0^0$ for $i=1,\dots,n$ we have $\mathcal{L}^0(x_0^0) = f(x_0^0)$ and as $c^M = 0$ clearly $\mathcal{L}^M(x^M_0) = f(x_0^M)$. We find a lower bound on $\Gamma$ as a final step in our proof. Details about all the constants are given in detail in the Appendix~\ref{app:non_convex_proof}.
\end{proof}

\begin{remark}[Upper bound on $\ell$]
It is important to note here that unlike in the convex setting, Theorem~\ref{thm:main_theorem_non_convex} does not allow to set the number of steps in the inner loop, i.e. $\ell$, arbitrarily large. That essentially means that the number of snapshot points cannot be reduced below a certain threshold in $k$-SVRG-V2 for non-convex problems. 
The limitation on $\ell$ occurs due to the fact that we cannot work with a Lyapunov function which only depends on the inner loop iteration as done in~\citep{reddi2016stochastic} and hence the expected variance keeps on adding itself to the next variance term which finally gives an extra dependence of the order $\ell^2$. But we do believe that the limitation on $\ell$ can be improved further. 
Besides that limitation on $\ell$, we  get the same convergence rate for our method as that of non-convex SVRG and non-convex SAGA. 
\end{remark}

Now we discuss the lemmas which are helpful in proving Theorem~\ref{thm:main_theorem_non_convex}. The proofs of these lemmas are deferred to Appendix~\ref{app:non_convex_proof}.
Lemma~\ref{lemm:non_convex_lemm1} establishes the recurrence relation between the second term of the Lyapunov function, $H^{m+1}$, with $H^m$. 
\begin{lemma}\label{lemm:non_convex_lemm1}
Consider the setting of Theorem~\ref{thm:main_theorem_non_convex}. Then, conditioned on the iterates obtained before the $m^{th}$ outer loop, it holds for $\gamma > 0$:
\begin{align}
\begin{split} \label{eq:lemm_non_convex_lemm1}
\bbE_{\ell ,m}' H^{m+1} \leq  \eta^2  \ell \  \bbE_{\ell ,m} \| V^m \|_F^2  + \left( 1 - \frac{\ell}{n}\right)\frac{\eta}{\gamma}\ \bbE_{\ell ,m} \|\nabla F^m \|_F^2+\left(1+\gamma \eta \ell\right)\left(1 - \frac{\ell}{n}\right) H^m \,.
\end{split}
\end{align}
\end{lemma}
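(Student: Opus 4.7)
The goal is to bound $\bbE'_{\ell,m} H^{m+1} = \bbE'_{\ell,m}\tfrac{1}{n}\sum_{i=1}^n \|x^{m+1}_0 - \theta_i^{m+1}\|^2$ by conditioning on the refresh pattern, controlling intra–outer-loop displacements with Cauchy--Schwarz, and routing a single cross term into the $\|\nabla F^m\|_F^2$ slot via a carefully tuned Young's inequality.

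\textbf{Step 1 (split on the refresh event).} In \mVtwo{}, $\Phi^m$ is drawn without replacement from $[n]$ independently of $\mathcal{I}_\ell^m$, so each $i$ lies in $\Phi^m$ with marginal probability $\ell/n$. Since $\theta_i^{m+1}$ equals $\tilde{x}^{m+1}$ on $\{i \in \Phi^m\}$ and $\theta_i^m$ otherwise, applying $\bbE_q'$ first and $\bbE_{\ell,m}$ next gives
\begin{align*}
\bbE'_{\ell,m} H^{m+1} = \tfrac{\ell}{n}\,\bbE_{\ell,m}\|x_0^{m+1}-\tilde{x}^{m+1}\|^2 + \Bigl(1-\tfrac{\ell}{n}\Bigr)\,\bbE_{\ell,m}\tfrac{1}{n}\sum_{i=1}^n\|x_0^{m+1}-\theta_i^m\|^2.
\end{align*}

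\textbf{Step 2 (bound each summand).} For $\mu=0$ the averaging reduces to $\tilde{x}^{m+1} = \tfrac{1}{\ell}\sum_{t=0}^{\ell-1} x_t^m$, so $x_0^{m+1}-\tilde{x}^{m+1} = \tfrac{1}{\ell}\sum_t(x_\ell^m-x_t^m)$; Jensen on $\|\cdot\|^2$ together with Cauchy--Schwarz on $x_\ell^m - x_t^m = -\eta\sum_{s=t}^{\ell-1} v_s^m$ gives $\|x_0^{m+1}-\tilde{x}^{m+1}\|^2 \leq \eta^2\ell\,\|V^m\|_F^2$. For the second summand I decompose $x_0^{m+1}-\theta_i^m = (x_0^{m+1}-x_0^m) + (x_0^m-\theta_i^m)$ and expand the square. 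The pure-displacement piece is at most $\eta^2\ell\,\|V^m\|_F^2$ by Cauchy--Schwarz. For the cross term, the tower property together with unbiasedness $\bbE_{i_t}[v_t^m \mid \mathcal{I}_t^m] = \nabla f(x_t^m)$ rewrites it as $-\eta\,\bbE_{\ell,m}\langle \sum_t \nabla f(x_t^m),\,x_0^m-\theta_i^m\rangle$; Young's inequality with parameter $c = \gamma\eta\ell$, followed by $\|\sum_t \nabla f(x_t^m)\|^2 \leq \ell\,\|\nabla F^m\|_F^2$, bounds twice the cross term by $\tfrac{\eta}{\gamma}\bbE_{\ell,m}\|\nabla F^m\|_F^2 + \gamma\eta\ell\,\|x_0^m-\theta_i^m\|^2$. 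Averaging over $i$ replaces the last factor by $H^m$.

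\textbf{Step 3 (assemble).} Substituting into Step~1, the two $\|V^m\|_F^2$ contributions combine as $\tfrac{\ell}{n}\eta^2\ell + (1-\tfrac{\ell}{n})\eta^2\ell = \eta^2\ell$, while the $\|\nabla F^m\|_F^2$ and $H^m$ terms inherit the prefactor $(1-\ell/n)$ and yield exactly the claimed inequality.

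The delicate step is the Young bound on the cross term: the choice $c = \gamma\eta\ell$ is rigid because it must simultaneously produce the gradient-slot constant $\eta/\gamma$ and leave a residual $\gamma\eta\ell\,\|x_0^m-\theta_i^m\|^2$ that combines with the baseline $\|x_0^m-\theta_i^m\|^2$ to give the coefficient $(1+\gamma\eta\ell)$ in front of $H^m$. A subtler point is that $v_t^m$ and $\nabla f(x_t^m)$ share the filtration $\mathcal{I}_t^m$, so the tower-property substitution must be performed before Young, not after; otherwise one loses the clean separation that lets the gradient term aggregate into $\|\nabla F^m\|_F^2$.
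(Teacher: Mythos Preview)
Your proof is correct and follows essentially the same route as the paper: split on the refresh event, bound $\|x_0^{m+1}-\tilde{x}^{m+1}\|^2$ by $\eta^2\ell\,\|V^m\|_F^2$ via Cauchy--Schwarz (the paper's Lemma for this term), and handle $\|x_0^{m+1}-\theta_i^m\|^2$ by expanding the square, replacing $v_t^m$ with $\nabla f(x_t^m)$ in the cross term via the tower property, and then applying Young. The only cosmetic difference is that you apply Young once to the aggregated inner product $\langle \sum_t \nabla f(x_t^m),\, x_0^m-\theta_i^m\rangle$ with parameter $c=\gamma\eta\ell$, whereas the paper applies it term-by-term to each $\langle \nabla f(x_t^m), x_0^m-\theta_i^m\rangle$ with parameter $\gamma$; both yield the same constants.
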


This result suggests that we now should relate the variance of the stochastic gradient update with the expected true gradient and the Lyapunov function. This is done in Lemma~\ref{lemm:non_convex_lemm2}, with the help of the result from Lemma~\ref{lem:lem_non_convex_1} which is provided in Appendix~\ref{app:non_convex_proof}.

\begin{lemma}\label{lemm:non_convex_lemm2}
Consider the setting of Theorem~\ref{thm:main_theorem_non_convex}. Upon completion of the $m^{th}$ outer loop it holds:
\begin{align}
\begin{split} \label{eq:non_convex_lemm2}
(1-2L^2\eta^2\ell^2) \bbE_m \| V^m\|_F^2  
& =2 \bbE_m \|\nabla F^m\|^2 + {4L^2\ell}H^m \,.
\end{split} 
\end{align}
\end{lemma}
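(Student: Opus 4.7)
The plan is to control the Frobenius norm $\|V^m\|_F^2$ of the stochastic update matrix by comparing each column $v_t^m = \nabla f_{i_t}(x_t^m) - \alpha_{i_t}^m + \bar{\alpha}^m$ to the true gradient at the same iterate. Since $v_t^m$ is an unbiased estimator of $\nabla f(x_t^m)$ (by construction of the SVRG-type gradient correction), I would first split
\[
 \bbE_{t+1,m}\|v_t^m\|^2 = \|\nabla f(x_t^m)\|^2 + \bbE_{t+1,m}\|v_t^m - \nabla f(x_t^m)\|^2,
\]
or, if a cleaner constant is desired, use $\|a+b\|^2 \le 2\|a\|^2 + 2\|b\|^2$. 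The variance term is then bounded by dropping the centering: $\bbE\|v_t^m - \nabla f(x_t^m)\|^2 \le \bbE\|\nabla f_{i_t}(x_t^m) - \nabla f_{i_t}(\theta_{i_t}^m)\|^2$, at which point $L$-smoothness of each $f_i$ yields the bound $L^2 \bbE_{t+1,m}\|x_t^m - \theta_{i_t}^m\|^2$.

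Next I would decouple the drift of the inner iterates from the snapshot displacement via $\|x_t^m - \theta_{i_t}^m\|^2 \le 2\|x_t^m - x_0^m\|^2 + 2\|x_0^m - \theta_{i_t}^m\|^2$. Taking the expectation over the uniformly sampled $i_t$ turns the second term into $2 H^m$ (this uses the very definition $H^m = \frac{1}{n}\sum_i \|x_0^m - \theta_i^m\|^2$ together with the fact that, conditioned on the iterates entering the $m^{th}$ outer loop, $H^m$ is fixed). For the first term, I would use the telescoping identity $x_t^m - x_0^m = -\eta \sum_{s=0}^{t-1} v_s^m$ and Cauchy--Schwarz:
\[
 \|x_t^m - x_0^m\|^2 \le \eta^2 t \sum_{s=0}^{t-1} \|v_s^m\|^2,
\]
so that $\sum_{t=0}^{\ell-1}\|x_t^m - x_0^m\|^2 \le \eta^2 \ell^2 \|V^m\|_F^2$ (up to an absorbable constant).

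Summing the pointwise estimates over $t = 0, \dots, \ell-1$ and using the tower identity for $\bbE_m\|V^m\|_F^2$ and $\bbE_m\|\nabla F^m\|_F^2$ recorded just before the lemma, I obtain an inequality of the schematic form
\[
 \bbE_m\|V^m\|_F^2 \le 2\,\bbE_m\|\nabla F^m\|_F^2 + 2L^2\eta^2\ell^2\,\bbE_m\|V^m\|_F^2 + 4L^2 \ell\, H^m.
\]
Rearranging to isolate $\bbE_m\|V^m\|_F^2$ on the left produces exactly the factor $(1 - 2L^2\eta^2 \ell^2)$ claimed in the statement.

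The main obstacle is the self-referential appearance of $\|V^m\|_F^2$ on both sides of the bound: the iterate-drift term $\|x_t^m - x_0^m\|^2$ is itself a partial sum of the $v_s^m$'s, which is precisely the mechanism responsible for the $\ell^2$ dependence and the upper limit on $\ell$ mentioned in the remark after Theorem~\ref{thm:main_theorem_non_convex}. Care must therefore be taken that the Cauchy--Schwarz step on $\|\sum_s v_s^m\|^2$ is done as tightly as possible (i.e., bounding $\sum_{t<\ell}\sum_{s<t} 1$ by $\tfrac{1}{2}\ell^2$ rather than $\ell^2$ where the writeup allows), so that the resulting coefficient matches $2L^2\eta^2\ell^2$ on the left and $2$, $4L^2\ell$ on the right, and so that the step size condition $2L^2\eta^2\ell^2 < 1$ (which enters the definition of $b_1$) can later be enforced.
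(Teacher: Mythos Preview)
Your proposal is correct and follows essentially the same route as the paper: the paper packages your steps 1--5 (the $\|a+b\|^2\le 2\|a\|^2+2\|b\|^2$ split, dropping the mean in the variance term, $L$-smoothness, the triangle split through $x_0^m$, and the telescoping/Cauchy--Schwarz bound $\|x_t^m-x_0^m\|^2\le \eta^2 t\sum_{s<t}\|v_s^m\|^2$) into the auxiliary Lemma~\ref{lem:lem_non_convex_1}, then sums over $t$, bounds $\sum_{t<\ell} t\le \tfrac12\ell^2$, and rearranges exactly as you describe. Your tracking of the constants ($2$, $2L^2\eta^2\ell^2$, $4L^2\ell$) matches the paper's.
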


Finally, we can proceed to present the most important lemma of this section from which the main Theorem~\ref{thm:main_theorem_non_convex} readily follows. 
\begin{lemma}\label{lem:non_convex_general_result}
Consider the setting of Theorem~\ref{thm:main_theorem_non_convex}, that is $c^m, c^{m+1}$ and 
$\gamma > 0$ are such that $\Gamma^m > 0$. Then:
\begin{align}
 \Gamma^m \cdot \bbE_m \|\nabla F^m \|^2  \leq  \mathcal{L}^m(x^m_0) - \bbE_{\ell ,m}' \mathcal{L}^{m+1}(x^{m+1}_0) \,. \label{eq:recurrence_non_convex}
\end{align}
\end{lemma}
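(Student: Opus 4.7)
The plan is to combine three ingredients: the $L$-smoothness of $f$ applied along the inner loop, Lemma~\ref{lemm:non_convex_lemm1} which controls the growth of $H^{m+1}$ in expectation, and Lemma~\ref{lemm:non_convex_lemm2} which relates $\bbE_m\|V^m\|_F^2$ to $\bbE_m\|\nabla F^m\|_F^2$ and $H^m$. The parameters $c^m$ and $\Gamma^m$ are designed precisely so that, after this combination, the resulting inequality has the desired form.

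First, I would apply $L$-smoothness of $f$ to the update $x_{t+1}^m = x_t^m - \eta v_t^m$ in the inner loop, obtaining
\begin{align*}
f(x_{t+1}^m) \leq f(x_t^m) - \eta\,\lin{\nabla f(x_t^m), v_t^m} + \tfrac{L\eta^2}{2}\|v_t^m\|^2\,.
\end{align*}
Taking conditional expectation over $i_t$ and using unbiasedness $\bbE_{i_t}[v_t^m\mid x_t^m] = \nabla f(x_t^m)$ (which follows directly from the definition of $v_t^m$ and the uniform sampling of $i_t$), then summing from $t=0$ to $t=\ell-1$ and using the tower property, I would obtain
\begin{align*}
\bbE_m f(x_0^{m+1}) \leq f(x_0^m) - \eta\,\bbE_m \|\nabla F^m\|_F^2 + \tfrac{L\eta^2}{2}\,\bbE_m \|V^m\|_F^2\,.
\end{align*}

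Next, I would add $c^{m+1}\bbE'_{\ell,m} H^{m+1}$ to both sides and substitute the upper bound from Lemma~\ref{lemm:non_convex_lemm1}, introducing the terms $c^{m+1}\eta^2\ell\,\bbE_m\|V^m\|_F^2$, $c^{m+1}(1-\ell/n)\tfrac{\eta}{\gamma}\bbE_m\|\nabla F^m\|_F^2$ and $c^{m+1}(1+\gamma\eta\ell)(1-\ell/n) H^m$ on the right-hand side. Then I would use Lemma~\ref{lemm:non_convex_lemm2}, which (once rewritten with $b_1 = (1-2L^2\eta^2\ell^2)^{-1}$) gives $\bbE_m\|V^m\|_F^2 \leq 2b_1\bbE_m\|\nabla F^m\|_F^2 + 4b_1 L^2\ell\,H^m$, to eliminate the $\|V^m\|_F^2$ terms.

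The final step is to collect coefficients. The coefficient of $\bbE_m\|\nabla F^m\|_F^2$ becomes
\begin{align*}
-\eta + b_1 L\eta^2 + 2b_1 c^{m+1}\eta^2\ell + c^{m+1}(1-\ell/n)\tfrac{\eta}{\gamma}\,,
\end{align*}
which, using $(1-\ell/n)\leq 1$, is bounded above by $-\Gamma^m$. The coefficient of $H^m$ becomes
\begin{align*}
2b_1 L^3\eta^2\ell + 4 b_1 c^{m+1} L^2\eta^2\ell^2 + c^{m+1}(1+\gamma\eta\ell)(1-\ell/n)\,,
\end{align*}
which, using $(1+\gamma\eta\ell)(1-\ell/n)\leq 1 - \ell/n + \gamma\eta\ell$, is bounded above by $c^m$ as defined in~\eqref{eq:defc}. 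Combining these, the inequality reads $\bbE_m f(x_0^{m+1}) + c^{m+1}\bbE'_{\ell,m}H^{m+1} \leq f(x_0^m) + c^m H^m - \Gamma^m\,\bbE_m\|\nabla F^m\|_F^2$, which is exactly~\eqref{eq:recurrence_non_convex} after recognising the two sides as $\bbE'_{\ell,m}\mathcal{L}^{m+1}(x_0^{m+1})$ and $\mathcal{L}^m(x_0^m)$ respectively.

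The main obstacle is the careful bookkeeping in the final step: one must verify that the coefficients produced by the chain smoothness $\to$ Lemma~\ref{lemm:non_convex_lemm1} $\to$ Lemma~\ref{lemm:non_convex_lemm2} collapse exactly into the definitions of $\Gamma^m$ and the recursion for $c^m$, and in particular that the slack introduced by the two inequalities $(1-\ell/n)\leq 1$ and $(1+\gamma\eta\ell)(1-\ell/n)\leq 1-\ell/n + \gamma\eta\ell$ is absorbed in the right direction. The positivity condition $\Gamma^m > 0$ and the implicit $1 - 2L^2\eta^2\ell^2 > 0$ (needed to invert in Lemma~\ref{lemm:non_convex_lemm2}) are what constrain the admissible step size and loop length $\ell$, and explain the upper bound on $\ell$ remarked upon after the theorem.
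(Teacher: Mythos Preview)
Your proposal is correct and follows essentially the same route as the paper: smoothness along the inner loop to bound $\bbE_m f(x_0^{m+1})$, then Lemma~\ref{lemm:non_convex_lemm1} for the $H^{m+1}$ term, then Lemma~\ref{lemm:non_convex_lemm2} (with $b_1=(1-2L^2\eta^2\ell^2)^{-1}$) to eliminate $\|V^m\|_F^2$, and finally the same two slackenings $(1-\ell/n)\le 1$ and $(1+\gamma\eta\ell)(1-\ell/n)\le 1-\ell/n+\gamma\eta\ell$ to match the definitions of $\Gamma^m$ and $c^m$. The bookkeeping you lay out is exactly what the paper does.
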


\section{Experiments} \label{sec:expts}
\begin{figure*}[t]
\centering
  \includegraphics[width=0.32\linewidth]{../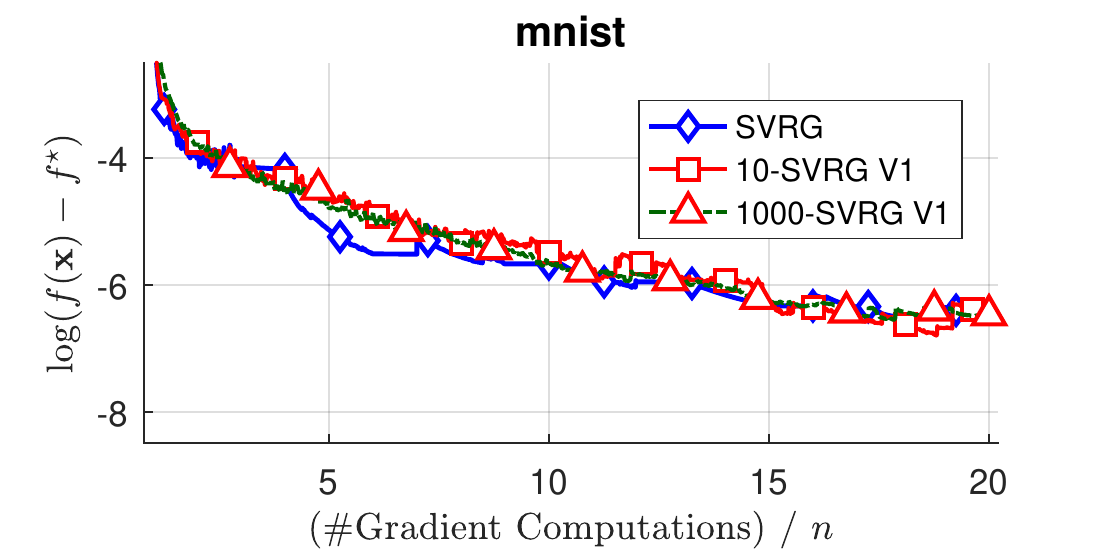}
\hfill
  \includegraphics[width=0.32\linewidth]{../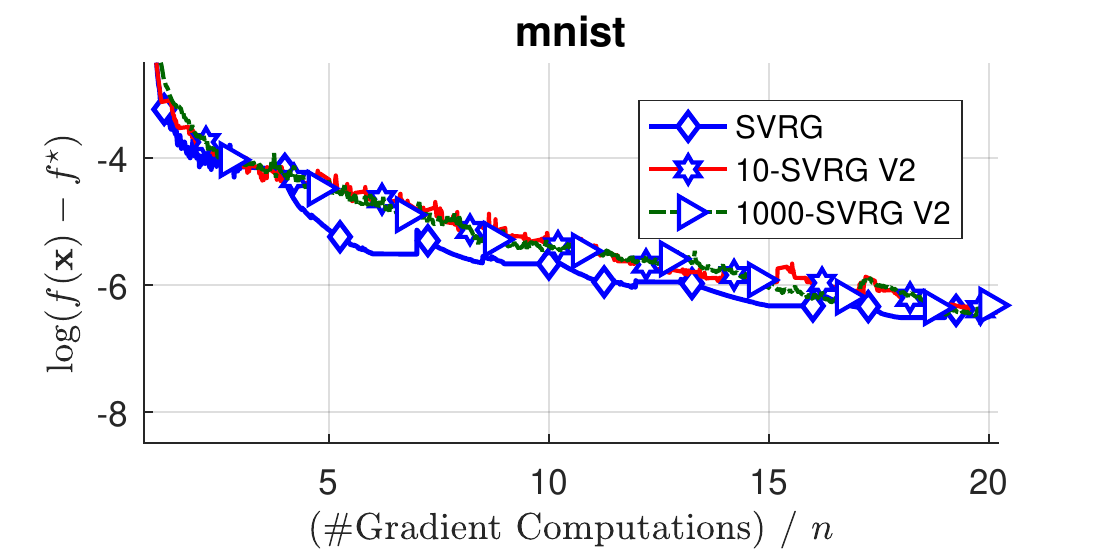}
\hfill
  \includegraphics[width=0.32\linewidth]{../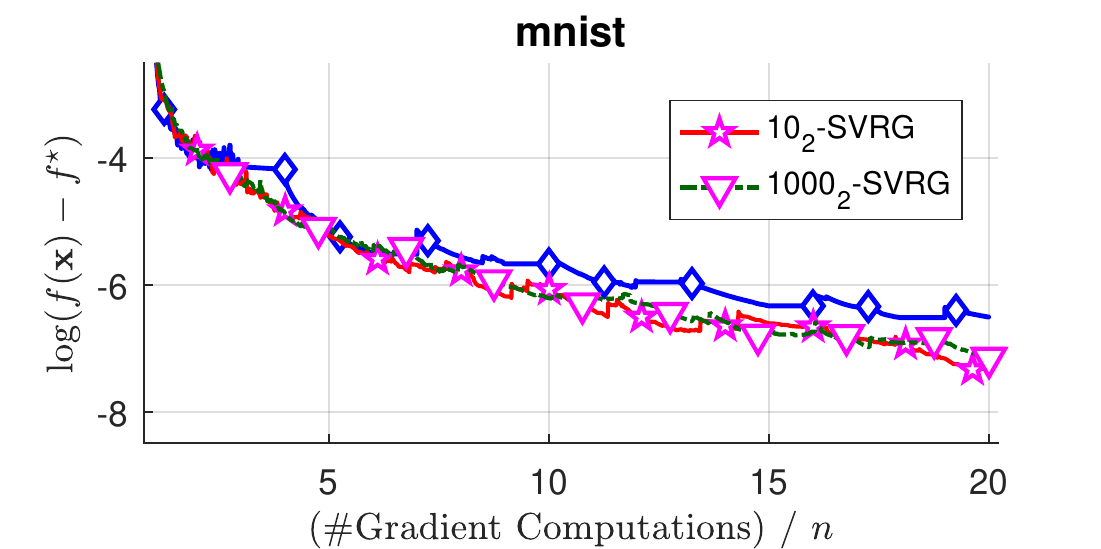}  
\caption{Residual loss on \emph{mnist} for SVRG, \mVone{} (left), \mVtwo{} (middle) and \mpract{} (right) for  $k=\{10,1000\}$.}
\label{fig:mnist_main}
\end{figure*}

\begin{figure*}[t]
\centering
  \includegraphics[width=0.32\linewidth]{../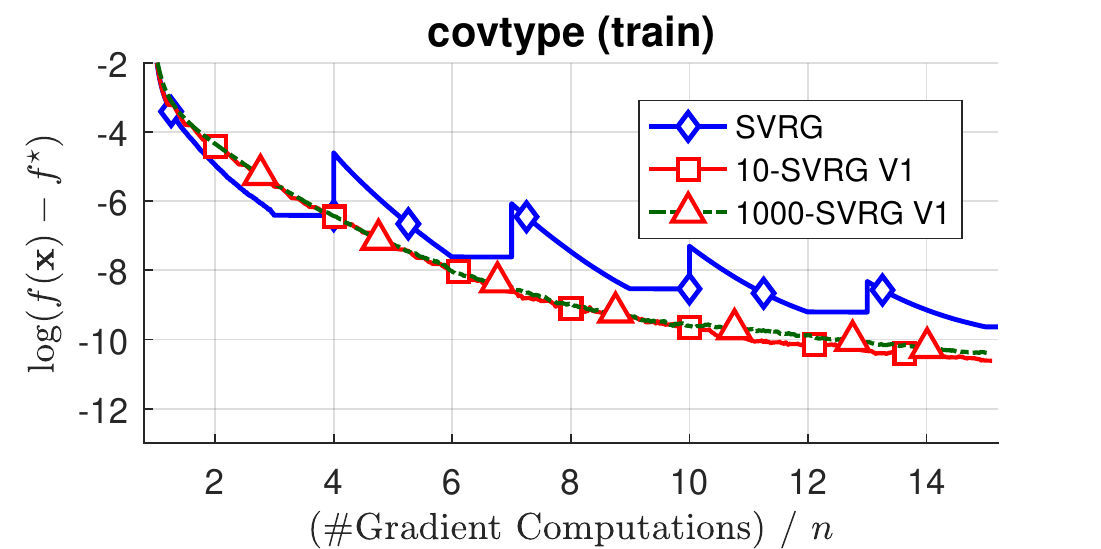}
\hfill
  \includegraphics[width=0.32\linewidth]{../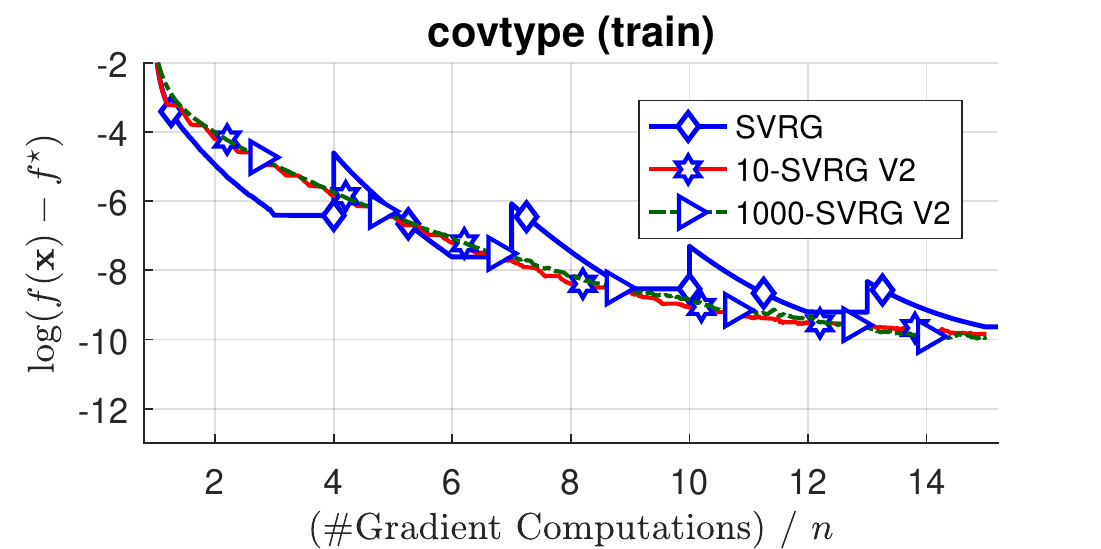}
\hfill
  \includegraphics[width=0.32\linewidth]{../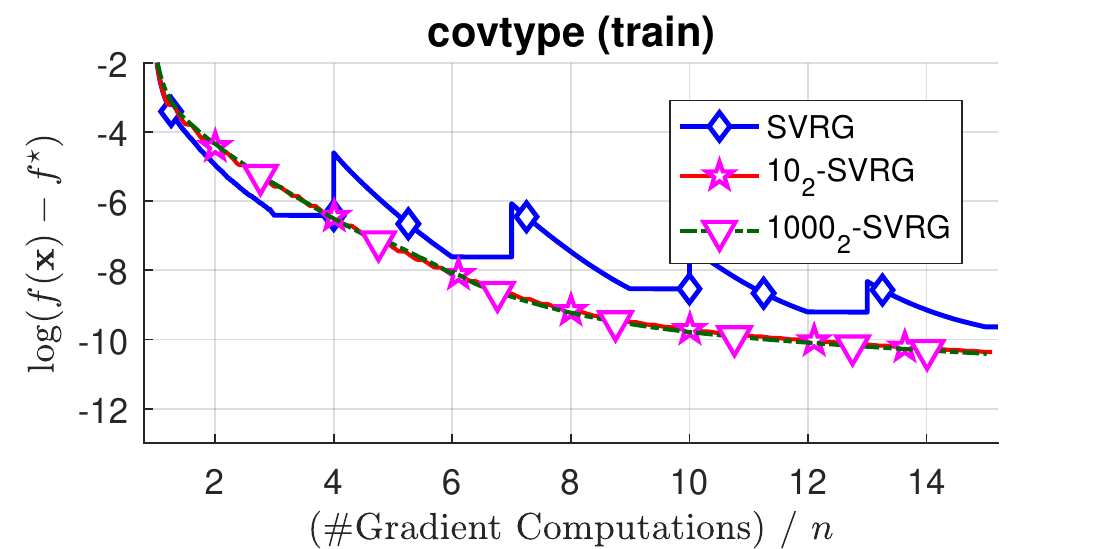}  
\caption{Residual loss on \emph{covtype (train)} for  SVRG, \mVone{} (left), \mVtwo{} (middle) and \mpract{} (right) for  $k=\{10,1000\}$.}
\label{fig:covertype_main}
\end{figure*}
To support the theoretical analysis, we present numerical results on 
$\ell_2$-regularized logistic regression problems, i.e. problems of the form
\begin{align}
f(x) = \frac{1}{n} \sum_{i=1}^n \log(1 + \exp(-b_i \lin{a_i,x}) + \frac{\lambda}{2} \norm{x}^2\,. 
\end{align}
The regularization parameter $\lambda$ is set to $1/n$, as in~\citep{nguyen2017sarah}.
We use the datasets \emph{covtype(train,test)} and \emph{MNIST(binary)}\footnote{All datasets are available at \href{http://manikvarma.org/code/LDKL/download.html}{http://manikvarma.org/code/LDKL/download.html}}.
Some statistics of the datasets are summarized in Table~\ref{tab:datasets}. 
For all experiments we use $x_0 = 0$ and perform a \emph{warm start} of the algorithms, that is we provide $\nabla f(x_0)$ as input. Several cold start procedures (where $\nabla f_i$ are injected one by one) have been suggested (cf.~\citep{defazio2014saga}) but discussing the effects of these heuristics is not the focus of this paper.

\begin{table}[t]
\begin{center}
\begin{tabular}{|c|c|c|c|} \hline
Dataset & $d$ & $n$ & $L$ \\ \hline \hline
\emph{covtype (test)} & 54 & 58\,102 & 1311\\ \hline
\emph{covtype (train)} & 54 & 522\,910 & 43\,586 \\ \hline
\emph{mnist} & 784 & 60\,000 & 38\,448\\[0px] \hline
\end{tabular}
\caption{Summary of datasets used for experiments. We use $L = \frac{1}{4} \max_i \|a_i\|^2$, where $a_i$ represents the $i^{th}$ data point.  The factor of 4 is due to the use of the logistic loss.}
\label{tab:datasets}
\end{center}
\end{table}

We conduct experiments with 
SAGA, SVRG (we fix the size of the inner loop to $n$)
and the proposed $k$-SVRG for $k=\{1,10,100,1000\}$ in all variants ($k$-SVRG-V1, $k$-SVRG-V2 and $k_2$-SVRG). For simplicity we use the parameters $l = q = \lceil n/k \rceil$ throughout. 

The running time of the algorithms is dominated by two important components: the time for computation and the time to access the data. The actual numbers depend on the hardware and problem instances.
\begin{description}[style=unboxed,leftmargin=0.3cm]
\item[Gradient Computations (\#GC).] Fig.~\ref{fig:intro} (left).
We count the number of gradient evaluations of the form $\nabla f_i(x)$. 
In SAGA, each step of the inner loop only comprises one computation, whereas for SVRG, two gradients have to be computed in the inner loop. The figure nicely depicts the stalling of SVRG after one pass over the data (when a full gradient has to be computed \emph{in situ}). 

\item[Effective Data Reads (\#ER).] Fig.~\ref{fig:intro} (middle).
We count the number of access to the data, that is when a $d$-dimensional vector needs to be fetched from memory. In the SVRG variants this is one data point in each iteration of the inner loop, and $\cO(\lceil n/k \rceil)$ data points when updating the gradients (see Remark~\ref{rem:implementation}). For SAGA  
in each iteration two values have to be fetched. For the $k$-SVRG variants the stalling phases are more equally distributed (for $k$ large). Moreover, there is no big jump in function value as the current iterate does not have to be updated (a difference to SVRG).
\end{description}

\subsection{Illustrative Experiment, Figure~\ref{fig:intro}}
\label{sec:veryfing}
For the results displayed in Figure~\ref{fig:intro} in Section~\ref{sec:introsvrg} we set the learning rate to an artificially low value $\eta = 0.1/L$ for all algorithms. This allows to emphasize the distinctive features of each method. Figure~\ref{fig:demo} in the appendix depicts additional $k$-SVRG variants for the same setting.

\subsection{Experiments on Large Datasets}
Due to the large memory constrained of SAGA, we do not run SAGA on large scale problems. Even though for every method there is a \emph{theoretical safe} stepsize~$\eta$, it is common practice
to tune the stepsize according to the dataset~(cf.~\citep{defazio2014saga,schmidt2017minimizing}). 
By extensive testing we determined the stepsizes that achieve the smallest training error after $10n$ \#ER for \emph{covtype (test)} and after $30n$ \#ER for \emph{mnist}.\footnote{We like to emphasize that the optimal stepsize crucially depend on the maximal budget. I.e. the optimal values might be different if the application demands higher or lower accuracy.} The determined optimal learning rates are summarized in Table~\ref{tab:stepsize}. 
For \emph{covtype (train)} we figured $\eta = 5.7/L$ is a reasonable setting for all algorithms.

\begin{table}[t]
\begin{center}
\scalebox{1}{
\begin{tabular}{|c|c|c|c|} \hline
Algorithm/Dataset & \emph{covtype (test)} & \emph{mnist} & \emph{covtype (train)} \\ \hline \hline
SVRG & $2.0/L$ &  $18.5/L$ & \multirow{4}{*}{$5.7/L$}\\ \cline{1-3}
\mVone & $(1.2,1.3,1.7,1.5)/L$ &  $(-,17,17,14)/L$ & \\ \cline{1-3}
\mVtwo & $(1.8,1.7,1.7,1.8)/L$ &  $(-,18,17,17.5)/L$&  \\ \cline{1-3} 
\mpract & $(1.9,1.9,1.8,1.8)/L$ &  $(-,19,18,17.5)/L$&  \\[0px] \hline
\end{tabular}
}
\caption{Determined optimal stepsizes $\eta$ for the datasets \emph{covtype (test)} and \emph{mnist} and parameters $k=(1,10,100,1000)$.}
\label{tab:stepsize}
\end{center}
\end{table}

In Figure~\ref{fig:mnist_main} we compare all algorithms on \emph{mnist}. We observe that $k_2$-SVRG performs best on \emph{mnist}, followed by the other $k$-SVRG variants which perform very similar to  SVRG. In Figure~\ref{fig:covertype_main} we compare all algorithms  on \emph{covertype (train)} and the picture is similar: $k_2$-SVRG works the best, followed by \mVone{}, then \mVtwo{} and all variants of $k$-SVRG outperform SVRG.
We observe that the parameter $k$ seems to affect the performance only by a small factor on these datasets. However, it is not easy to  predict the best possible $k$ without tuning it but larger values of $k$ do not seem to make performance worse; allowing to choose $k$ as large as supported on the system used.
Additional results are displayed in Appendix~\ref{app:add_expts}.

\section{Conclusion}
We propose $k$-SVRG, a variance reduction technique suited for large scale optimization and show convergence on convex and non-convex problems at the same theoretical rates as SAGA and SVRG. Our algorithms have a very mild memory requirement compared to SAGA and the memory can be tuned according to the available resources. By tuning the parameter $k$, one can pick the algorithm that fits best to the available system resources. I.e. one should pick a picking large $k$ for systems with fast memory, and smaller $k$ when data access is slow (in order that the additional memory still fits in RAM). This can provide a huge amount of flexibility inn distributed optimization as we can choose different $k$ on different machine.  We could also imagine that automatic tuning of $k$ as the optimization progresses, i.e. automatically adapting to the system resources, might yield the best performance in practice. However, this feature needs to be investigated further.

For future work, we plan to extend our analysis of \mpract{} using tools along the line of the recently proposed analysis of reshuffled SGD~\citep{haochen2018random}. From the computational point of view, it is also important to investigate if the gradients at the snapshot points could be replaced with inexact approximations of the gradients which are computationally cheaper to compute.

\clearpage
\bibliography{opt-ml}
\bibliographystyle{plainnat}

\newpage
\onecolumn
\appendix
\begin{center}
{\centering \LARGE Appendix }
\vspace{1cm}
\sloppy

\end{center}
\section{Pseudo-code for  $k_2$-SVRG} \label{app:pseudo_prac}
We provide the pseudo code $k_2$-SVRG in Algorithm~\ref{algo:svrg_saga_prac} below. For simplicity we assume here $n$ (mod) $\ell = 0$, i.e. $n=k\ell$.
\begin{algorithm}[h!]
\begin{algorithmic}[1] 
 \STATE \textbf{goal} minimize $f(x) = \frac{1}{n} \sum_{i = 1}^n f_i(x)$
  \STATE \textbf{init} $x_0^0$, $\ell$, $\eta$, $\mu$, $\alpha_i^0 ~\forall i \in [n]$ and $\bar{\alpha}^0 \gets \frac{1}{n} \sum_{i=1}^n\alpha_i^0$
  \STATE $S_{\ell} \gets \sum_{t=0}^{\ell-1} (1 - \eta \mu)^{t}$
  \STATE $k \gets \frac{n}{\ell}$
  \STATE \textbf{for} {$m=0\dots M-1$}
  \STATE \quad  ind $\gets$ randperm(n)
  \STATE \quad  \textbf{for} {$j=0\dots k-1$}
   \STATE \quad \quad \textbf{init} $\Phi^m \gets \emptyset$
  \STATE \quad \quad \textbf{for} {$t=0\dots \ell-1$}
   \STATE \quad \quad \quad pick $i_t \in [n]$ uniformly at random
   \STATE \quad \quad \quad $\alpha_{i_t}^m \gets \nabla f_{i_t}(\theta^m_{i_t})$
  \STATE \quad \quad \quad $x_{t+1}^m \gets x_{t}^m -\eta \left(\nabla f_{i_t}(x_t^m) - \alpha_{i_t}^m + \bar{\alpha}_m\right)$
  \STATE \quad \quad \quad $\Phi^m \gets \Phi^m \cup \left\{\text{ind}[j*\ell+t]\right\}$ \label{ln:phi} 
  \STATE \quad \quad \textbf{end for}
  \STATE \quad \quad  $\tilde{x}^{m+1} \gets \frac{1}{S_{\ell}} \sum_{t=0}^{\ell -1 -t} (1-\eta\mu)^{\ell -t} x_t^m $
  \STATE \quad \quad  $x^{m+1}_0 \gets x^m_{\ell}$
   \STATE \quad \quad $\theta_i^{m+1} \gets \begin{cases} \tilde{x}^{m+1}, &\text{if } i \in \Phi_m \\ \theta_i^m, &\text{otherwise}\end{cases}$
  \STATE \quad \textbf{end for}
  \STATE \quad $ \bar{\alpha}^{m+1} \gets \bar{\alpha}^{m} + \frac{1}{n} \sum_{i\in \Phi^m} \nabla f_i(\theta_i^{m+1})- \frac{1}{n} \sum_{i\in \Phi^m} \nabla f_i(\theta_i^{m}) $  \label{lst:a22} 
  \STATE \textbf{end for}
\end{algorithmic}
 \caption{$k_2$-SVRG }
 \label{algo:svrg_saga_prac}
\end{algorithm}

Like in Algorithm~\ref{algo:svrg_saga_1_2}, no full pass over the data is required at the end of the outer loop. In particular, line~\ref{lst:a22} requires only $\ell$ gradient computations, as explained in Remark~\ref{rem:implementation}. 

\section{Definitions and Notations} \label{app:mixed}
We reiterate some definitions here again before proving the main results of this paper. 

\paragraph{Function classes.} A a differentiable convex function $f \colon \R^d \to \R$ is $L$-smooth if:
\begin{align}
f(y) \leq f(x) + \langle \nabla f(x), y-x \rangle + \frac{L}{2} \| x -y\|^2 ~~\forall x,y \in \mathbb{R}^d\,, \label{eq:convex_smooth}
\end{align}
which is equivalent to
\begin{align}
\| \nabla f(x) - \nabla f(y) \| \leq L \| x -y  \|  ~~\forall x,y \in \mathbb{R}^d\,. \label{eq:non-convex-smooth}
\end{align}
A differentiable non-convex function is $L$-smooth if~\eqref{eq:non-convex-smooth} holds.
A differentiable convex function $f \colon \R^d \to \R$ is $\mu$-strongly convex if
\begin{align}
f(y) \geq f(x) + \langle \nabla f(x), y-x \rangle + \frac{\mu}{2} \| x -y\|^2 ~~\forall x,y \in \mathbb{R}^d\,.\label{eq:convex_strong}
\end{align}
Frequently, we will be denoting $f^\star := f(x^\star).$

\paragraph{Series Expansion.} 
The following observation will be useful in the analysis later. For any integer $k$ and real number $\zeta < 1$ we have
\begin{align}
(1- \zeta )^k = 1 - k \zeta + \frac{k(k-1)}{2!} \zeta^2 - \frac{k(k-1)(k-2)}{3!} \zeta^3 + \mathcal{O}( \zeta^4)\,, \label{eq:mc_series}
\end{align}
and it is easily verified that whenever $\zeta \leq \frac{1}{k} $:
\begin{align}
(1-\zeta )^k &\geq 1 - k \zeta \,,\label{eq:mc_series1} \\
(1-\zeta )^k &\leq 1 - k \zeta  + \frac{k(k-1)}{2} \zeta^2 \,. \label{eq:mc_series2}
\end{align}

\paragraph{Frequently used Inequalities.} For $a,b \in \mathbb{R}^d$ we have: 
\begin{align}
\|a+b \|_2^2 \leq (1 + \beta^{-1})\| a\|_2^2 + (1+\beta) \| b \|_2^2\,, ~~ \forall \beta > 0\,. \label{eq:norm_decom}
\end{align}
For for $\beta =1$ this simplifies to:
\begin{align}
\|a+b \|_2^2 \leq 2\| a\|_2^2 + 2 \| b \|_2^2 \,. \label{eq:norm_decom_spec}
\end{align}
Also the following inequality holds:
\begin{align}
-\left\langle a,b \right \rangle \leq \frac{\gamma}{2} \|a \|_2^2 + \frac{1}{2\gamma} \| b \|_2^2\,, ~~\forall \gamma > 0\,. \label{eq:dot_product_decom}
\end{align}

\paragraph{Notation for Non-Convex Proofs (see Section~\ref{app:non_convex_proof}).}
As defined in equation~\eqref{eq:k-svrg_update}, we have the following optimization updates:
\begin{align}
\begin{split}
x^m_{t+1}  &= x^m_t - \eta \left(  \nabla f_{i_t}(x_t^m) - \nabla f_{i_t}(\theta_{i_t}^m) + \frac{1}{n}  \sum_{i=1}^n \nabla f_i (\theta_i^m) \right) = x^m_t - \eta v^m_t  
 \end{split} \label{eq:k-svrg_update_app}
\end{align}
where $v^m_t =  \nabla f_{i_t}(x_t^m) - \nabla f_{i_t}(\theta_{i_t}^m) + \frac{1}{n}  \sum_{i=1}^n \nabla f_i (\theta_i^m)$ as defined in Section~\ref{subsec:nonconvex_th}.
Note that $\bbE_{\{i_t\}} v_t^m = \nabla f(x^m_t)$. As defined earlier in Section~\ref{subsec:nonconvex_th},  
\begin{align}
\|V^m \|_F^2 := \sum_{t=0}^{\ell -1} \|v_t^m \|^2\, & &\text{and} & &
\|\nabla F^m \|_F^2 := \sum_{t = 0}^{\ell -1} \|\nabla f(x^m_{t}) \|^2\,.
\end{align} 
Also, we will be using the following relations which immediately follow by taking expectation:
\begin{align}
\bbE_m\|V^m \|_F^2 &= \sum_{t=0}^{\ell -1}\bbE_{t+1,m} \|v_t^m \|^2 \,\\
\bbE_m\|\nabla F^m \|_F^2 &= \sum_{t = 0}^{\ell -1} \bbE_{t+1,m}\|\nabla f(x^m_{t}) \|^2 =\sum_{t = 0}^{\ell -1} \bbE_{t,m}\|\nabla f(x^m_{t}) \|^2\,.
\end{align}

\section{Proofs for Convex Problems} \label{app:proof_convex}

In this section we provide the proof of Theorems~\ref{thm:var_2_mu_gen} and~\ref{thm:var_1_mu}. We first mention an important lemma from \cite{hofmann2015variance} which relates the two consecutive iterates for SAGA. 
\begin{lemma}[\cite{hofmann2015variance}]  \label{lem:hoffman_lem}
For the iterate sequence of any algorithm that evolves solutions according to equation~\eqref{eq:gen_update}, the following holds for a single update step, in expectation over the choice of $i_t$ given $x_t$:
\begin{align*}
 \bbE_{\{i_t\}} \|& x_{t+1} - x^\star \|^2 \leq (1 - \eta \mu ) \| x_t - x^\star \|^2 + 2\eta^2 \bbE_{\{i_t\}}  \| \alpha_{i_t} - \nabla f_{i_t}(x^\star)\|^2 - 2\eta(1- 2\eta L) f^\delta(x_t)\,.
\end{align*}
\end{lemma}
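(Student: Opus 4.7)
The plan is to follow the standard one-step analysis for SAGA-type updates, applied to the generic update rule~\eqref{eq:gen_update}. Let $g_{i_t}(x_t) = \nabla f_{i_t}(x_t) - \alpha_{i_t} + \bar{\alpha}$ as in~\eqref{eq:gen_update}, where $\alpha_i := \nabla f_i(\theta_i)$ and $\bar{\alpha} := \frac{1}{n}\sum_i \alpha_i$. First I would expand the squared distance after one step,
\begin{equation*}
\| x_{t+1} - x^\star \|^2 = \| x_t - x^\star \|^2 - 2\eta \lin{g_{i_t}(x_t),\, x_t - x^\star} + \eta^2 \| g_{i_t}(x_t) \|^2,
\end{equation*}
and take expectation with respect to $i_t$. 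The key observation is that $g_{i_t}(x_t)$ is an unbiased estimator of $\nabla f(x_t)$: indeed, $\bbE_{\{i_t\}} \nabla f_{i_t}(\theta_{i_t}) = \bar{\alpha}$, so the correction term vanishes in expectation, giving $\bbE_{\{i_t\}} g_{i_t}(x_t) = \nabla f(x_t)$. Using $\mu$-strong convexity of $f$ in the form $\lin{\nabla f(x_t), x_t - x^\star} \geq f^\delta(x_t) + \tfrac{\mu}{2}\|x_t - x^\star\|^2$ then handles the linear term.

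Next I would bound the variance term $\bbE_{\{i_t\}} \| g_{i_t}(x_t) \|^2$. The standard trick is to split $g_{i_t}(x_t) = A_{i_t} + B_{i_t}$ with
\begin{equation*}
A_{i_t} := \nabla f_{i_t}(x_t) - \nabla f_{i_t}(x^\star), \qquad B_{i_t} := -\bigl(\alpha_{i_t} - \nabla f_{i_t}(x^\star)\bigr) + \bar{\alpha},
\end{equation*}
and apply~\eqref{eq:norm_decom_spec} to get $\bbE \|g_{i_t}\|^2 \leq 2 \bbE \|A_{i_t}\|^2 + 2\bbE \|B_{i_t}\|^2$. For $\bbE \|A_{i_t}\|^2$ I would use the well-known ``co-coercivity''-style inequality $\bbE\| \nabla f_{i_t}(x_t) - \nabla f_{i_t}(x^\star) \|^2 \leq 2L \, f^\delta(x_t)$, which follows from $L$-smoothness and convexity of each $f_i$ together with $\nabla f(x^\star) = 0$. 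For $\bbE \|B_{i_t}\|^2$ I would observe that $\bar{\alpha} = \bbE_{\{i_t\}}[\alpha_{i_t} - \nabla f_{i_t}(x^\star)]$ (again using $\nabla f(x^\star)=0$), so $B_{i_t}$ is the centered version of $-(\alpha_{i_t} - \nabla f_{i_t}(x^\star))$, and hence
\begin{equation*}
\bbE_{\{i_t\}} \|B_{i_t}\|^2 \leq \bbE_{\{i_t\}} \| \alpha_{i_t} - \nabla f_{i_t}(x^\star) \|^2.
\end{equation*}

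Combining these pieces yields
\begin{equation*}
\bbE_{\{i_t\}}\| g_{i_t}(x_t) \|^2 \leq 4L f^\delta(x_t) + 2\, \bbE_{\{i_t\}} \| \alpha_{i_t} - \nabla f_{i_t}(x^\star) \|^2,
\end{equation*}
and plugging back into the expansion gives
\begin{equation*}
\bbE_{\{i_t\}}\| x_{t+1} - x^\star \|^2 \leq (1 - \eta\mu)\|x_t - x^\star\|^2 - 2\eta f^\delta(x_t) + 4\eta^2 L f^\delta(x_t) + 2\eta^2 \bbE_{\{i_t\}}\| \alpha_{i_t} - \nabla f_{i_t}(x^\star) \|^2,
\end{equation*}
which is exactly the claimed bound after collecting the $f^\delta(x_t)$ terms. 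The only step that requires genuine care is the variance decomposition: one must resist the temptation of writing $\bbE\|B_{i_t}\|^2 \leq 2\bbE\|\alpha_{i_t} - \nabla f_{i_t}(x^\star)\|^2 + 2\|\bar{\alpha}\|^2$, which would lose the constants, and instead exploit the centering identity $\bbE\|X - \bbE X\|^2 \leq \bbE\|X\|^2$ to keep the coefficient at exactly $2\eta^2$.
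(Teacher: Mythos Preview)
The paper does not prove this lemma; it is quoted verbatim from \cite{hofmann2015variance} and used as a black box in the proof of Lemma~\ref{lem:similar_hoff}. Your argument is correct and is essentially the standard proof one finds in the SAGA/SVRG literature (and in particular in \cite{hofmann2015variance}): expand the square, use unbiasedness plus strong convexity for the cross term, split the variance term around $\nabla f_{i_t}(x^\star)$, bound the first half by the co-coercivity inequality $\tfrac{1}{n}\sum_i\|\nabla f_i(x_t)-\nabla f_i(x^\star)\|^2 \le 2L f^\delta(x_t)$, and bound the second half via the centering identity. The only implicit assumption worth flagging is that the co-coercivity step requires each $f_i$ to be convex (not just $L$-smooth); the paper uses this same assumption silently when it asserts $H_i^m \ge \|\alpha_i^m - \nabla f_i(x^\star)\|^2$ with $H_i^{m+1} = 2L h_i^m(\tilde{x}^{m+1})$, so your proof is consistent with the paper's setting.
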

The result in Lemma~\ref{lem:hoffman_lem} is the initial step towards proving a similar result to relate the iterates of two consecutive outer loops, as stated in Lemma~\ref{lem:similar_hoff}. 
\begin{proof}[\textbf{Proof of Lemma~\ref{lem:similar_hoff}}]
With Lemma~\ref{lem:hoffman_lem}, we obtain

\begin{align*}
\bbE_{\ell ,m} \norm{x^m_{\ell} - x^\star}^2 &\leq (1-\eta \mu) \bbE_{\ell -1,m}\norm{x^m_{\ell -1}-x^\star}^2 - 2\eta(1-2 L \eta) \bbE_{\ell -1,m} f^\delta (x^m_{\ell -1}) + 2 \eta^2  \bbE_{\ell,m}\| \alpha_{i_t}^m - \nabla f_{i_t}(x^\star) \|^2  \\
&= (1-\eta \mu) \bbE_{\ell -1,m}\norm{x^m_{\ell -1}-x^\star}^2 - 2\eta(1-2 L \eta) \bbE_{\ell -1,m} f^\delta (x^m_{\ell -1}) + 2 \eta^2 \bbE_{\{i\}}  \| \alpha_i^m - \nabla f_i(x^\star) \|^2 
\end{align*}

We now apply Lemma~\ref{lem:hoffman_lem} recursively to find the following:
\begin{align}
\begin{split} \label{eq:recursion}
\bbE_{\ell ,m} \norm{x^m_{\ell} - x^\star}^2 &\leq (1-\eta \mu) \bbE_{{\ell -1},m}\norm{x^m_{\ell -1}-x^\star}^2 - 2\eta(1-2 L \eta) \bbE_{{\ell -1},m} f^\delta (x^m_{\ell -1}) + 2 \eta^2 \bbE_{\{i\}}  \| \alpha_i^m - \nabla f_i(x^\star) \|^2 \\
&\leq (1-\eta \mu)^2\bbE_{{\ell -2},m} \norm{x^m_{\ell -2}-x^\star}^2 - 2\eta(1-2 L \eta) \left[ \bbE_{{\ell -1},m} f^\delta(x^m_{\ell -1}) + (1 - \eta \mu) \bbE_{{\ell -2},m} f^\delta(x^m_{\ell -2})\right] \\
& \qquad \qquad \qquad \qquad \qquad \qquad \qquad \qquad \qquad \qquad  + 2 \eta^2  \bbE_{\{i\}}  \| \alpha_i^m - \nabla f_i(x^\star) \|^2 \left[ 1 + (1 - \eta \mu) \right] \\
&\leq (1 - \eta\mu)^{\ell}  \norm{x^m_0 - x^\star}^2- 2\eta(1-2 L \eta) \sum_{t=0}^{\ell -1} (1  - \eta \mu)^{t}  \bbE_{{\ell -t},m} f^\delta(x^m_{\ell -t-1})\\
& \qquad \qquad \qquad \qquad \qquad \qquad \qquad \qquad \qquad \qquad   + 2 \eta^2 \bbE_{\{i\}}  \| \alpha_i^m - \nabla f_i(x^\star) \|^2 \cdot \sum_{t = 0}^{\ell -1} (1-\eta \mu)^t \\
& = (1 - \eta\mu)^{\ell}  \norm{x^m_0 - x^\star}^2- 2\eta(1-2 L \eta) \bbE_{{\ell -1},m}\left[\sum_{t=0}^{\ell -1} (1  - \eta \mu)^{t}    f^\delta(x^m_{\ell -t-1}) \right]  \\
&\qquad  \qquad \qquad \qquad \qquad \qquad \qquad \qquad \qquad \qquad \qquad+ 2 \eta^2  S_{\ell} \bbE_{\{i\}}  \| \alpha_i^m - \nabla f_i(x^\star) \|^2  \\
& = (1 - \eta\mu)^{\ell}  \norm{x^m_0 - x^\star}^2- 2\eta(1-2 L \eta)S_{\ell}   \bbE_{{\ell -1},m}\left[\sum_{t=0}^{\ell -1}  \frac{(1  - \eta \mu)^{t}}{S_{\ell}}    f^\delta(x^m_{\ell -t-1}) \right]  \\
&\qquad \qquad \qquad \qquad \qquad \qquad \qquad \qquad \qquad \qquad \qquad + 2 \eta^2  S_{\ell} \bbE_{\{i\}}  \| \alpha_i^m - \nabla f_i(x^\star) \|^2  \\
& = (1 - \eta\mu)^{\ell}   \norm{x^m_0 - x^\star}^2- 2\eta(1-2 L \eta)S_{\ell}   \bbE_{{\ell},m}\left[\sum_{t=0}^{\ell -1}  \frac{(1  - \eta \mu)^{t}}{S_{\ell}}    f^\delta(x^m_{\ell -t-1}) \right]  \\
&\qquad \qquad \qquad \qquad \qquad \qquad \qquad \qquad \qquad \qquad \qquad+ 2 \eta^2  S_{\ell} \bbE_{\{i\}}  \| \alpha_i^m - \nabla f_i(x^\star) \|^2  \\
& = (1 - \eta\mu)^{\ell}   \norm{x^m_0 - x^\star}^2- 2\eta(1-2 L \eta)S_{\ell}   \bbE_{{\ell},m}\left[\sum_{t=0}^{\ell -1}  \frac{(1  - \eta \mu)^{\ell -t-1}}{S_{\ell}}    f^\delta(x^m_{t}) \right] 
\end{split} \notag \\
&\qquad \qquad \qquad \qquad \qquad \qquad \qquad \qquad \qquad \qquad \qquad+ 2 \eta^2  S_{\ell} \bbE_{\{i\}}  \| \alpha_i^m - \nabla f_i(x^\star) \|^2 
\end{align}
Since $f$ is a convex function, we have by Jensen's inequality for weights $\alpha_i \geq 0$, $\sum_{i =1}^\ell  \alpha_i =1$, 
\begin{align}
f \left( \sum_{i =1}^\ell  \alpha_i x_i \right) \leq \sum_{i =1}^\ell  \alpha_i f(x_i)\,. \label{eq:jensen}
\end{align}
By definition $\tilde{x}^{m+1} = \frac{1}{S_{\ell}}\sum_{t = 0}^{\ell -1} (1 - \eta \mu)^{\ell -t-1} x^m_{t} $ and $x^m_{\ell} = x^{m+1}_0$. Hence from equations~\eqref{eq:recursion} and \eqref{eq:jensen}, we get the result:
\begin{align}
\bbE_{m}  \norm{x^{m+1}_{0} - x^\star}^2  &\leq  (1 - \eta\mu)^{\ell } \norm{x^{m}_{0} - x^\star}^2  - 2\eta(1-2 L \eta) S_\ell  \bbE_{m} f^\delta(\tilde{x}^{m+1}) + 2 \eta^2   S_{\ell} \bbE_{\{i\}}   \| \alpha_i^m -\nabla  f_i(x^\star) \|^2 \,. \qedhere
\end{align}
\end{proof}

\begin{proof}[\textbf{Proof of Lemma~\ref{lem:lemma2}}]
Recall that we defined $h^m_i(x) =  f_i(x)  - f_i(x^\star)  - \langle x - x^\star,\nabla f_i ({x}^{\star})  \rangle$. It is important to note
\begin{align}
\bbE_{\{i\}} [h^m_i(x)] = f(x) - f^\star = f^\delta(x)\,.
\end{align}
We need to derive an upper bound on $H^{m+1}$. By the update equation~\eqref{eq:updateH} we have $H_i^{m+1}=H_i$ for $i \notin \Phi^m$ and $H_i^{m+1}= 2L h_i^m(\tilde{x}^{m+1})$ for $i \in \Phi^m$. As $\tilde{x}^{m+1}$ is not known until the inner loop has terminated, we will now proof a slightly more general statement.

Define $H^{m+1}(x):= \sum_{i \notin \Phi^m} H_i^m + \sum_{i \in \Phi^m} 2L h_i^m(x)$. We will now proof that the claimed statements hold for $H^{m+1}(x)$ and we will put  $\tilde{x}^{m+1}$ in place of $x$ at the end of the proof.

\begin{description}
 \item[$k$-SVRG-V1] The process can be seen as doing sampling with replacement $\ell$ number of times.
Define the auxiliary quantities $H_i^{m,0}(x):= H_i^m$ and $H^{m,t}(x)$ by the following equation
\begin{align*}
H^{m,t}_i(x ) = \begin{cases}
    2 L h^m_i(x), & \text{if $i^{th}$ data point is chosen in $t^{th}$ inner loop iteration}.\\
    H^{m,t-1}_i(x ) , & \text{otherwise}.
  \end{cases}
\end{align*}

Now for any fixed but arbitrary $x$, we have:
 \begin{align}
 \mathbb{E}_{\ell ,m} H^{m+1} (x) &  = \mathbb{E}_{\ell ,m} H^{m,{\ell}}(x) =  \mathbb{E}_{\ell ,m} \left[ \frac{1}{n}\sum_{i=1}^n  H^{m,{\ell}}_i(x)  \right] =  \frac{1}{n}\sum_{i=1}^n \mathbb{E}_{\ell ,m} H^{m,{\ell}}_i(x)  \notag \\
 &= \frac{2L}{n} \bbE_{\ell ,m}  [h^m_i(x)] + \left( 1 - \frac{1}{n}\right) \mathbb{E}_{\ell -1,m}  H^{m,\ell -1}_i(x)  \notag \\
 &= \frac{2L}{n}  f^\delta (x)    + \left( 1 - \frac{1}{n}\right) \left[ \frac{2L}{n} f^\delta (x)  +  \left( 1 - \frac{1}{n}\right) \mathbb{E}_{\ell -1,m} H^{m,\ell -2}_i(x) \right] \notag \\
 &=  \frac{2L}{n} f^\delta (x)   \sum_{t = i}^\ell  \left( 1 - \frac{1}{n} \right)^{t -1} + \left( 1 - \frac{1}{n}\right)^\ell  H^m \notag \\
 &= \frac{2L Q_\ell}{n}  f^\delta (x)  +\left( 1 - \frac{1}{n}\right)^\ell  H^m\label{eq:rec_H_case1_proof}
 \end{align}
 where $Q_\ell = \sum_{t = 0}^{\ell -1} \left( 1 - \frac{1}{n} \right)^{t}$. Now if we replace $x$ by $\tilde{x}^{m+1}$ we get the claimed result:
\begin{align}
 \mathbb{E}_{m} H^{m+1} =  \mathbb{E}_{\ell ,m} H^{m+1} = \frac{2L Q_\ell}{n} \mathbb{E}_{m} f^\delta (\tilde{x}^{m+1})  +\left( 1 - \frac{1}{n}\right)^\ell  H^m \,.
 \end{align}
  
 \item[$k$-SVRG-V2] Finding the relation between $H^{m+1}$ and $H^m$ is much more simpler for $k$-SVRG-V2 as a set of independet $q$ points are used for the update of $H^{m+1}$.
 \begin{align}
\bbE_{\ell ,m} \mathbb{E}_q' H^{m+1} &= \bbE_{\ell ,m} \mathbb{E}_q' H^{m,\ell} =  \bbE_{\ell ,m} \mathbb{E}_q' \left[ \frac{1}{n}\sum_{=1}^n H^{m,\ell}_i \right]  =  \frac{1}{n}\sum_{i=1}^n  \bbE_{\ell ,m} \mathbb{E}_q'  H^{m,\ell}_i \notag \\
 &= \frac{2L q}{n}\bbE_{\ell ,m} \mathbb{E}_q' [h^m_i(\tilde{x}^{m+1})]  + \left( 1 - \frac{q}{n}\right)  H^m \notag \\
 & =  \frac{2L q}{n} \bbE_{\ell ,m}  f^\delta(\tilde{x}^{m+1})  + \left( 1 - \frac{q}{n}\right) H^m \,,
 \end{align}
 which is the claimed bound.

 \end{description}
\vspace{-2em}
\end{proof}

Using the results obtained in Lemmas~\ref{lem:similar_hoff} and \ref{lem:lemma2}, we are now ready to prove the main theoretical results of the Section~\ref{subsec:convex_th}.
\begin{proof}[\textbf{Proof of Theorem~\ref{thm:var_2_mu_gen}}]

We apply the results  from Lemma~\ref{lem:similar_hoff} and Lemma~\ref{lem:lemma2} for $k$-SVRG-V2 to estimate the Lyapunov function:
\begin{align}
\bbE_{q,m}' \mathcal{L}(x^{m+1}_0, H^{m+1}) &= \bbE_{m} \|x^{m+1}_0 - x^\star \|^2 + \gamma \sigma \bbE_{q,m}' H^{m+1}  \notag \\
&\leq (1 - \eta\mu)^{\ell} \norm{x^{m}_{0} - x^\star}^2  - 2\eta(1-2 L \eta) S_{\ell} \bbE_{m} f^\delta(\tilde{x}^{m+1}) + 2 \eta^2   S_{\ell}  \bbE_{\{i\}} \| \alpha_i^m - \nabla f_i (x^\star) \|^2\notag \\ & \qquad \qquad \qquad \qquad \qquad \qquad \qquad + \gamma \sigma \left[ \frac{2L q}{n}\bbE_{m} f^\delta(\tilde{x}^{m+1})+\left( 1 - \frac{q}{n}\right) H^m \right] \notag \\
&\leq (1- \eta\mu)^{\ell}  \norm{{x}^m_0 - x^\star}  +   H^m \underbrace{\left[ \gamma \sigma \left(1 -\frac{q}{n}\right)+ {2 \eta^2}S_{\ell} \right]}_{=:p_2}   \notag \\
& \qquad \qquad \qquad \qquad \qquad \qquad \qquad  - \underbrace{\left({2\eta(1-2 L \eta){S}_{\ell} } - \gamma \sigma \frac{2L q}{n}  \right)}_{=:r_2}\bbE_{m} f^\delta(\tilde{x}^{m+1}) \,. \label{eq:final_conv_case2}
\end{align}
Now in equation~\eqref{eq:final_conv_case2}, we need to find parameters such that 
\begin{align*}
p_2 &= \gamma \sigma \left(1 -\frac{q}{n}\right)+ {2 \eta^2}S_{\ell} \leq \gamma \sigma (1 - \eta\mu)^{\ell}\,, & &\text{(Condition 1)} \\
r_2 &= {2\eta(1-2 L \eta){S}_{\ell} } - \gamma \sigma \frac{2L q}{n}  \geq 0\,. & &\text{(Condition 2)} 
\end{align*}

\paragraph{Condition 1:} If we choose $\eta \leq \frac{\sigma \frac{q}{\ell}}{\mu n + 2 L}$, then (Condition 1) is satisfied. We show the calculations below: 
\begin{align}
\gamma \sigma \left(1 -\frac{q}{n}\right)+ {2 \eta^2}S_{\ell} - \gamma \sigma (1 - \eta \mu)^{\ell} &= \gamma \sigma \left( 1 -\frac{q}{n} \right) + \frac{2\eta^2 \left( 1 - (1- \eta\mu)^\ell  \right)}{\eta\mu} - \gamma \sigma (1 - \eta \mu)^{\ell} \notag \\
&=\frac{n\eta}{L} \sigma \left( 1 -\frac{q}{n} \right) + \frac{2\eta \left( 1 - (1- \eta\mu)^\ell  \right)}{\mu} - \frac{n\eta}{L} \sigma (1 - \eta \mu)^{\ell} \tag{$\gamma = \frac{n \eta }{L}$}\\
&=\eta \left( \sigma \frac{n}{L} + \frac{2}{\mu} - \sigma \frac{q}{L} \right) - \eta \left(\sigma \frac{n}{L} + \frac{2}{\mu} \right)  (1 - \eta \mu)^{\ell}
\end{align}
After division by $\eta \left(\sigma \frac{n}{L} + \frac{2}{\mu} \right)$, the right hand side reads as $1 - \frac{\sigma q}{\sigma n + 2 \frac{L}{\mu} } - (1-\eta\mu)^\ell$, and by the observation in~\eqref{eq:mc_series1} we see that the condition is satisfied for $\eta \leq \frac{\sigma \frac{q}{\ell}}{\mu n + 2 L}$.

\paragraph{Condition 2:}
If we choose $ \eta  \leq \min\left\{ \frac{1}{2L}\left(  1 -\frac{2q  \sigma  \left( n+ 2\frac{L}{\mu} \right)}{\ell \left((2n -q) + 4\frac{L}{\mu}\right)}  \right), \frac{\sigma \frac{q}{\ell}}{\mu n + 2 L}\right\} $ then the (Condition 2) is satisfied. The outline of the calculations are provided below. As $\gamma = \frac{\eta n }{L}$ the condition reads as
\begin{align}
\eta(1-2 L \eta){S}_{\ell}  -  \sigma \eta q  \geq 0\,,
\end{align}
which can be equivalently stated as $2L\eta \leq 1-\sigma \frac{q}{S_\ell}$, or
\begin{align}
2 L \eta  \leq 1 - \sigma \frac{q \eta \mu }{1 - (1- \eta \mu)^\ell } \,, \label{eq:thm2_cond2} 
\end{align}
using the definition of $S_\ell$.
Observe
\begin{align}
1 - \sigma \frac{q \eta \mu }{1 - (1- \eta \mu)^\ell }   &\leq 1  - \frac{\sigma q}{ \ell - \frac{\ell(\ell -1)}{2} \eta \mu} \tag{with~\eqref{eq:mc_series2}} \notag \\
&  \leq 1 - \frac{\sigma q}{ \ell \left( 1 - \frac{\ell}{2}\eta \mu \right) } \notag \\
& \leq 1 -\frac{\sigma q}{\ell \left( 1 - \frac{\sigma q \mu }{2(\sigma n \mu +2L)}\right)} \notag 
\intertext{%
where we used $\eta \leq \frac{\sigma \frac{q}{\ell}}{\mu n + 2 L}$. Since $\sigma \leq 1$ we have $\frac{\sigma q \mu}{ \sigma \mu n + 2L} \leq \frac{ q \mu}{  \mu n + 2L}$ and we can further estimate:
} 
&\leq 1 - \frac{\sigma q}{\ell \left( 1 - \frac{ q \mu }{2( n \mu +2L)}\right)} \notag\\
 &\leq 1 -  \frac{ 2q \sigma \left( n+ 2\frac{L}{\mu} \right)}{\ell \left((2n -q) + 4\frac{L}{\mu}\right)} \,.
\end{align} 
Hence, the condition in equation~\eqref{eq:thm2_cond2} is satisfied if
\begin{align}
 \eta  \leq \frac{1}{2L}\left(  1 -  \frac{ 2q\sigma \left( n+ 2\frac{L}{\mu} \right)}{ \ell \left((2n -q) + 4\frac{L}{\mu}\right)}  \right)
 \end{align}
as claimed. 

Finally, if we choose $q \geq \frac{\ell}{3}$ and $\sigma = \frac{\ell}{2q} \left( \frac{2L}{2L+\mu n} + \frac{2n + 2L /\mu}{ 2n-q + 4L/\mu}\right)^{-1}$  then choosing $\eta \leq \frac{1}{2(\mu n + 2L)}$ satisfies both the constraints.
\end{proof}

\begin{proof}[\textbf{Proof of Theorem~\ref{thm:var_1_mu}}]
We apply the result  from Lemma~\ref{lem:similar_hoff} and Lemma~\ref{lem:lemma2} for $k$-SVRG-V1 to estimate the Lyapunov function:
\begin{align}
\bbE_m \mathcal{L}(x^{m+1}_0, H^{m+1}) &= \bbE_m \|x^{m+1}_0 - x^\star \|^2 + \gamma \sigma \bbE_m H^{m+1}  \notag \\
&\leq (1 - \eta\mu)^{\ell} \norm{x^{m}_{0} - x^\star}^2  - 2\eta(1-2 L \eta) S_{\ell} \bbE_m f^\delta(\tilde{x}^{m+1}) + 2 \eta^2   S_{\ell} \bbE_{\{i\}}\| \alpha_i^m - \nabla f_i (x^\star) \|^2\notag \\ & \qquad \qquad \qquad \qquad \qquad \qquad +  \gamma \sigma \left[ \frac{2L Q_\ell}{n}\bbE_m f^\delta(\tilde{x}^{m+1}) +\left( 1 - \frac{1}{n}\right)^\ell  H^m \right] \notag \\
&\leq (1 - \eta\mu)^{\ell} \norm{x^{m}_{0} - x^\star}^2  +  H^m \underbrace{\left[ \gamma \sigma \left(1 -\frac{1}{n}\right)^{\ell}+ {2 \eta^2}S_{\ell} \right ]}_{=:p_1} \notag \\
& \qquad \qquad \qquad \qquad \qquad \qquad  - \underbrace{\left({2\eta(1-2 L \eta){S}_{\ell} } - \gamma \sigma \frac{2L Q_\ell}{n}  \right)}_{=:r_1}\bbE_m f^\delta(\tilde{x}^{m+1}) \label{eq:final_conv_case1}
\end{align}
Now in equation~\eqref{eq:final_conv_case1}, we need to find parameters such that
\begin{align*}
p_1 &= \gamma \sigma \left(1 -\frac{1}{n}\right)^\ell + {2 \eta^2}S_{\ell}  \leq \gamma \sigma (1 - \eta\mu)^{\ell} \,, & &\text{(Condition 1)} \\
r_1 &= 2\eta(1-2 L \eta){S}_{\ell}  - \gamma \sigma \frac{2L Q_\ell}{n}  \geq 0\,. & &\text{(Condition 2)} 
\end{align*}

\paragraph{Condition 1:} If we choose $ \eta \leq  \frac{\sigma\left( 1 - \frac{\ell -1}{2n} \right)}{ \mu n + 2 L}$ then (Condition 1) is satisfied. We show the calculations below:
\begin{align}
\gamma \sigma \left(1 -\frac{1}{n}\right)^\ell + {2 \eta^2}S_{\ell}  &= \gamma \sigma \left(1 -\frac{1}{n}\right)^\ell + {2 \eta}\frac{(1 - \eta \mu)^{\ell}}{\mu} \notag \\
&=\eta \left(  \sigma \frac{n}{L}\left( 1- \frac{1}{n} \right)^{\ell} + \frac{2}{\mu}\left( 1 - (1 - \eta \mu)^{\ell} \right)\right) \notag\\
&\leq \eta \left(  \sigma \frac{n}{L}\left( 1- \frac{\ell}{n}+ \frac{\ell (\ell-1)}{2 n^2}\right) + \frac{2}{\mu}\left( 1 - (1 - \eta \mu)^{\ell} \right)\right)
\end{align}
with~\eqref{eq:mc_series2}.
Hence, (Condition 1) is satisfied if it holds:
\begin{align}
\eta \left(  \sigma \frac{n}{L}\left( 1- \frac{\ell}{n}+ \frac{\ell (\ell-1)}{2 n^2}\right) + \frac{2}{\mu}\left( 1 - (1 - \eta \mu)^{\ell} \right)\right) \leq \sigma \frac{\eta n}{L} (1 - \eta \mu)^{\ell}\,.
\end{align}
We now finish the proof similarly as the proof of (Condition 1) in the proof of Theorem~\ref{thm:var_2_mu_gen} above. With the help of equation~\eqref{eq:mc_series1} we derive that $ \eta \leq  \frac{\sigma\left( 1 - \frac{\ell -1}{2n} \right)}{ \mu n + 2 L}$ is a sufficient condition to imply (Condition 1).

\paragraph{Condition 2:} If we choose $\eta \leq \min \left\{ \frac{1}{2L}\left( 1 - \frac{2\sigma \left( n + 2\frac{L}{\mu}\right)}{ 2n - \ell\left(1-\frac{\ell -1}{2n}\right) + 4 \frac{L}{\mu} } \right),  \frac{\sigma\left( 1 - \frac{\ell -1}{2n} \right)}{ \mu n + 2 L} \right\} $ then (Condition 2) is satisfied. 
By the definition of $Q_\ell$ and $S_\ell$, the condition can equivalently be written as
\begin{align}
 2\eta(1-2 L \eta){S}_{\ell}  - \gamma \sigma \frac{2L }{n} \sum_{t =1}^\ell  \left( 1 - \frac{1}{n} \right)^{t-1}  = 2\eta(1-2 L \eta) \frac{1 - (1- \eta\mu)^\ell }{\eta\mu }   - \gamma \sigma \frac{2L }{n} \frac{1 - (1-\frac{1}{n})^\ell }{\frac{1}{n}} \geq 0\,. \label{eq:thm1_cond2}
\end{align}
From equation~\eqref{eq:mc_series1}, we have $\left(1 - \frac{1}{n}\right)^{\ell} \geq 1 - \frac{\ell}{n}$. Hence it suffices to choose $\eta$ such that
\begin{align}
 2\eta(1-2 L \eta) \frac{1 - (1- \eta\mu)^\ell }{\eta\mu }   - \gamma \sigma \frac{2L \ell}{n} \geq 0 \,.
\end{align}
We simplify the above equation further to get:
\begin{align}
 {2\eta(1-2 L \eta) \frac{1 - (1- \eta\mu)^\ell }{\eta\mu } }  - \gamma \sigma \frac{2L \ell}{n}  
&=  2\eta \left( (1 - 2 L \eta )\frac{1 - (1- \eta\mu)^\ell }{\eta\mu }  - \sigma \ell \right) \\
& \geq 2\eta \ell \underbrace{\left((1 - 2 L \eta )\left( 1 - \frac{\ell -1}{2} \eta\mu \right) - \sigma\right)}_{=:s_1} \,,
\end{align}
with~\eqref{eq:mc_series2}. We will now derive a condition on $\eta$ such that $s_1 \geq 0$. By rearranging the terms in $s_1$ we see that it suffices to hold
\begin{align}
2L \eta &\leq 1 - \frac{\sigma}{1 - \frac{\ell -1}{2} \eta\mu} \leq 1 - \frac{\sigma}{1 - \frac{\ell}{2} \eta\mu} \leq 1 - \frac{\sigma}{1 - \frac{\ell}{2} \frac{\sigma \mu \left( 1 - \frac{\ell -1}{2n} \right)}{ \sigma \mu n + 2 L} }
\end{align}
where we used the assumption $\eta \leq \frac{\sigma\left( 1 - \frac{\ell -1}{2n} \right)}{  \mu n + 2 L}$ in the last inequality. Thus it suffices if
\begin{align}\eta \leq \frac{1}{2L}\left( 1 - \frac{2\sigma \left( n + 2\frac{L}{\mu}\right)}{ 2n - \ell\left(1-\frac{\ell -1}{2n}\right) + 4 \frac{L}{\mu} } \right)\,.
\end{align}
Finally, we see that if we choose $\eta \leq \frac{2\left(1 -\frac{\ell -1}{2n}\right)}{5(\mu n+ 2L)}$ and $\sigma = \left(2\frac{L\left(1 - \frac{\ell -1}{2n}\right)}{L+\mu n} + \frac{n + 2\frac{L}{\mu}}{ 2n-\ell (1- \frac{\ell -1}{2n}) + 4\frac{L}{\mu}}\right)^{-1} $  (which is of the same order as the $\sigma$ in the theorem~\ref{thm:var_2_mu_gen} upto a constant factor) then (Condition 1) and (Condition 2) both hold simultaneously. 
\end{proof}

\section{Proofs for Non-Convex Problems}\label{app:non_convex_proof}
 
In this section we derive the proof of Theorem~\ref{thm:main_theorem_non_convex}. First of all, we mention a result from~\cite{reddi2016fast} which is not directly applicable to our case as the setting is different, but which served as an inspiration for the proof.
\begin{lemma}[\cite{reddi2016fast}] \label{eq:result_suvrit_paper.}
Consider the SAGA updates for non-convex optimization problem where each $f_i$ is $L$-smooth and $v_t = \nabla f_{i_t}(x_t) - \nabla f_{i_t}(\theta_{i_t}) +\frac{1}{n} \sum_{i=1}^n \nabla f_i (\theta_i)$   in equation~\eqref{eq:gen_update} then:
\begin{align}
\bbE \| v_t\|^2 \leq 2 \bbE \|\nabla f(x_t) \|^2 + \frac{2L^2}{n} \sum_{i = 1}^n \bbE  \|  x_t - \theta_i \|^2\,.
\end{align}
\end{lemma}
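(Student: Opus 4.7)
The plan is to bound $\bbE\|v_t\|^2$ by isolating the conditional mean of $v_t$ under the randomness of $i_t$ and then controlling the residual as a variance. The key observation is that since $i_t$ is drawn uniformly from $[n]$,
$$\bbE_{\{i_t\}}\bigl[\nabla f_{i_t}(x_t) - \nabla f_{i_t}(\theta_{i_t})\bigr] = \nabla f(x_t) - \bar{\alpha},$$
with $\bar{\alpha} = \frac{1}{n}\sum_{i=1}^n \nabla f_i(\theta_i)$, so $\bbE_{\{i_t\}}[v_t] = \nabla f(x_t)$. This is what lets the control gradient act like a variance reducer.

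I would then write $v_t = \nabla f(x_t) + \bigl(v_t - \nabla f(x_t)\bigr)$ and apply inequality~\eqref{eq:norm_decom_spec} to obtain $\|v_t\|^2 \leq 2\|\nabla f(x_t)\|^2 + 2\|v_t - \nabla f(x_t)\|^2$. For the second summand, set $X := \nabla f_{i_t}(x_t) - \nabla f_{i_t}(\theta_{i_t})$; then $v_t - \nabla f(x_t) = X - \bbE_{\{i_t\}}X$, and the elementary variance inequality $\bbE\|X - \bbE X\|^2 \leq \bbE\|X\|^2$ yields
$$\bbE_{\{i_t\}}\|v_t - \nabla f(x_t)\|^2 \leq \bbE_{\{i_t\}}\|X\|^2 = \frac{1}{n}\sum_{i=1}^n \|\nabla f_i(x_t) - \nabla f_i(\theta_i)\|^2.$$

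To finish, I would invoke $L$-smoothness of each $f_i$ in the form~\eqref{eq:non-convex-smooth} to bound every summand by $L^2\|x_t - \theta_i\|^2$, and then take full expectation. The entire argument is routine; the only point that requires any thought is the recognition that $v_t - \nabla f(x_t)$ is centered under $\bbE_{\{i_t\}}$, which is precisely what permits the tighter variance bound on the cross term (rather than a further worst-case $\|a+b\|^2$ split), and is what makes the final constant a clean $2L^2$ instead of something larger.
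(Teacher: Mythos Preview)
Your proposal is correct and follows exactly the standard approach: the paper itself does not prove this lemma (it is cited from \cite{reddi2016fast}), but the identical sequence of steps---add and subtract $\nabla f(x_t)$, apply \eqref{eq:norm_decom_spec}, recognize the residual as $\xi_t - \bbE_{\{i_t\}}\xi_t$ with $\xi_t = \nabla f_{i_t}(x_t) - \nabla f_{i_t}(\theta_{i_t})$, use $\bbE\|X - \bbE X\|^2 \le \bbE\|X\|^2$, then invoke \eqref{eq:non-convex-smooth}---appears verbatim as the first six lines of the paper's proof of Lemma~\ref{lem:lem_non_convex_1}. Nothing is missing.
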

We will now derive a similar statement that holds for our proposed algorithm.
\begin{lemma}\label{lem:lem_non_convex_1}
Consider the setting of Theorem~\ref{thm:main_theorem_non_convex}. Then it holds:
\begin{align}
\bbE_{t+1,m} \|v^m_t \|^2 \leq  2 \bbE_{t,m} \big\| \nabla f(x^m_t) \big \|^2 + 4L^2 \eta^2 t  \sum_{j =0}^{t-1}\bbE_{j+1,m} \| v_j^m \|^2 + \frac{4L^2}{n} \sum_{i = 1}^n \| x^m_0 - \theta^m_i \|^2 \,.
\end{align}
\end{lemma}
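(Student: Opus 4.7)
\medskip

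\noindent\textbf{Proof plan.} The statement is the variance bound for the update direction $v_t^m$, tailored to $k$-SVRG-V2 where we need to track deviation of $x_t^m$ from the beginning of the outer loop $x_0^m$ (because the snapshot points $\theta_i^m$ are fixed during the inner loop). My plan is to combine (i) the standard ``variance $\le$ second moment'' trick that underlies Lemma~\ref{eq:result_suvrit_paper.}, (ii) smoothness of the $f_i$, and (iii) a Cauchy--Schwarz expansion of the cumulative update $x_t^m-x_0^m = -\eta\sum_{j=0}^{t-1}v_j^m$.

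\medskip

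\noindent First, I would split $v_t^m=\nabla f(x_t^m)+(v_t^m-\nabla f(x_t^m))$ and apply~\eqref{eq:norm_decom_spec}, giving
\begin{align}
\bbE_{\{i_t\}}\|v_t^m\|^2 \le 2\|\nabla f(x_t^m)\|^2 + 2\,\bbE_{\{i_t\}}\|v_t^m-\nabla f(x_t^m)\|^2.
\end{align}
Since $\bbE_{\{i_t\}}v_t^m=\nabla f(x_t^m)$, the second term is a variance and is therefore bounded by the second moment of $\nabla f_{i_t}(x_t^m)-\nabla f_{i_t}(\theta_{i_t}^m)$ (the $\bar\alpha^m$ piece is a constant that can be subtracted without increasing the variance). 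Using $L$-smoothness of each $f_i$ coordinatewise then yields
\begin{align}
\bbE_{\{i_t\}}\|v_t^m-\nabla f(x_t^m)\|^2 \le \frac{1}{n}\sum_{i=1}^n\|\nabla f_i(x_t^m)-\nabla f_i(\theta_i^m)\|^2 \le \frac{L^2}{n}\sum_{i=1}^n\|x_t^m-\theta_i^m\|^2.
\end{align}

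\medskip

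\noindent Next, because the $\theta_i^m$ are frozen throughout the $m$-th inner loop while $x_t^m$ moves, I would split $x_t^m-\theta_i^m = (x_t^m-x_0^m)+(x_0^m-\theta_i^m)$ and apply~\eqref{eq:norm_decom_spec} once more, producing the two terms $4L^2\,\bbE_{t,m}\|x_t^m-x_0^m\|^2$ and $\tfrac{4L^2}{n}\sum_i\|x_0^m-\theta_i^m\|^2$ (the latter is deterministic under the conditioning). For the first, I use $x_t^m-x_0^m=-\eta\sum_{j=0}^{t-1}v_j^m$ together with the Cauchy--Schwarz bound
\begin{align}
\Bigl\|\sum_{j=0}^{t-1}v_j^m\Bigr\|^2 \le t\sum_{j=0}^{t-1}\|v_j^m\|^2,
\end{align}
so that taking $\bbE_{t,m}$ and noting $\bbE_{t,m}\|v_j^m\|^2 = \bbE_{j+1,m}\|v_j^m\|^2$ by the tower property gives exactly $4L^2\eta^2 t\sum_{j=0}^{t-1}\bbE_{j+1,m}\|v_j^m\|^2$. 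Finally, taking full $\bbE_{t+1,m}$ of the opening inequality and assembling the three pieces yields the claimed bound.

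\medskip

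\noindent The main subtlety is keeping the conditioning straight: the bound is stated with $\bbE_{t+1,m}$ on the left, a $\bbE_{t,m}$ on the gradient term (since $x_t^m$ is measurable w.r.t.\ $\mathcal{I}_t^m$), and $\bbE_{j+1,m}$ inside the sum. The only non-routine step is justifying that subtracting $\bar\alpha^m-\nabla f(x_t^m)$ (a constant under the $i_t$-expectation) only reduces the second moment; everything else is smoothness plus Cauchy--Schwarz. I do not expect any real obstacle — this is essentially Lemma~\ref{eq:result_suvrit_paper.} with the extra step of telescoping $x_t^m$ back to the outer-loop start, which is exactly what introduces the extra $4L^2\eta^2 t\sum_j\bbE_{j+1,m}\|v_j^m\|^2$ term that is absent in SAGA's analysis.
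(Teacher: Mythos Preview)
Your proposal is correct and follows essentially the same route as the paper's proof: the variance-$\le$-second-moment reduction, smoothness of each $f_i$, and the Cauchy--Schwarz expansion of $x_t^m-x_0^m=-\eta\sum_{j<t}v_j^m$ are exactly the ingredients used, with matching constants. The only cosmetic difference is that the paper inserts $\nabla f_i(x_0^m)$ at the gradient level before applying smoothness, whereas you apply smoothness first and then split $x_t^m-\theta_i^m$ through $x_0^m$; the two orderings yield the identical bound.
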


\begin{proof}
We use the following notation, $\xi_t^m  := \left(\nabla f_{i_t}(x^m_t) - \nabla f_{i_t}(\theta_t^m) \right)$. 
Now, 
\begin{align}
\bbE_{t+1,m} \|v_t^m \|^2 &= \bbE_{t+1,m} \big\| \xi_t^m + \frac{1}{n} \sum_{i =1}^{n} \nabla f(\theta_i^m) \big\|^2 \notag \\
&= \bbE _{t+1,m}\big\| \xi_t^m + \frac{1}{n} \sum_{i =1}^{n} \nabla f(\theta_i^m) - \nabla f(x_t^m) + \nabla f(x_t^m)  \big \|^2 \notag \\
&\stackrel{\eqref{eq:norm_decom_spec}}{\leq}  2 \bbE_{t+1,m} \big\| \nabla f(x^m_t) \big \|^2 + 2 \bbE_{t+1,m}  \big\| \xi_t^m - \bbE_{\{t\}} \xi_t^m \big\|^2\notag \\
&\leq  2 \bbE_{t+1,m} \big\| \nabla f(x^m_t) \big \|^2 + 2 \bbE_{t,m} \bbE_{\{t\}} \big\| \xi_t^m - \bbE_{\{t\}}\xi_t^m \big\|^2 \notag \\
&\leq 2 \bbE_{t+1,m} \big\| \nabla f(x^m_t) \big \|^2 + 2 \bbE_{t,m}  \bbE_{\{t\}}  \big\| \xi_t^m \big \|^2\notag \\
&\leq 2 \bbE_{t+1,m} \big\| \nabla f(x^m_t) \big \|^2 + \frac{2}{n}\sum_{i =1}^n \bbE_{t,m} \| \nabla f_i(x^m_t) -  \nabla f_i(\theta^m_i)\|^2 \notag \\
&= 2 \bbE_{t,m} \big\| \nabla f(x^m_t) \big \|^2 + \frac{2}{n}\sum_{i =1}^n \bbE_{t,m}\| \nabla f_i(x^m_t) - \nabla f_i(x^m_0) + \nabla f_i(x^m_0) -  \nabla f_i(\theta^m_i)\|^2 \notag \\ 
&\stackrel{\eqref{eq:norm_decom_spec}}{\leq} 2 \bbE _{t,m}\big\| \nabla f(x^m_t) \big \|^2 + \frac{4}{n}\sum_{i =1}^n \bbE_{t,m}\| \nabla f_i(x^m_t) - \nabla f_i(x^m_0) \|^2 + \frac{4}{n} \sum_{i = 1}^n \|  \nabla f_i(x^m_0) -  \nabla f_i(\theta^m_i) \|^2\notag \\ 
&\stackrel{\eqref{eq:non-convex-smooth}}{\leq} 2 \bbE_{t,m} \big\| \nabla f(x^m_t) \big \|^2 + 4L^2 \bbE_{t,m}\| x^m_t - x^m_0 \|^2 + \frac{4}{n} \sum_{i = 1}^n \|  \nabla f_i(x^m_0) -  \nabla f_i(\theta^m_i) \|^2\notag \\
&= 2 \bbE_{t,m} \big\| \nabla f(x^m_t) \big \|^2 + 4L^2 \eta^2\bbE_{t,m}\big \| \sum_{j=0}^{t-1} v_j^m \big\|^2 + \frac{4}{n} \sum_{i = 1}^n \|  \nabla f_i(x^m_0) -  \nabla f_i(\theta^m_i) \|^2\notag \\
&\leq  2 \bbE_{t,m} \big\| \nabla f(x^m_t) \big \|^2 + 4L^2 \eta^2 t  \sum_{j =0}^{t-1}\bbE_{t,m} \| v_j^m \|^2 + \frac{4}{n} \sum_{i = 1}^n \|  \nabla f_i(x^m_0) -  \nabla f_i(\theta^m_i) \|^2\notag \\
&\leq  2 \bbE_{t,m} \big\| \nabla f(x^m_t) \big \|^2 + 4L^2 \eta^2 t  \sum_{j =0}^{t-1}\bbE_{j+1,m} \| v_j^m \|^2 + \frac{4L^2}{n} \sum_{i = 1}^n \| x^m_0 - \theta^m_i \|^2 \,.
\end{align}
Hence, finally we have 
\begin{align*}
\bbE_{t+1,m} \|v_t^m \|^2  \leq  2 \bbE_{t,m} \big\| \nabla f(x^m_t) \big \|^2 + 4L^2 \eta^2 t  \sum_{j =0}^{t-1}\bbE_{j+1,m} \| v_j^m \|^2 + \frac{4L^2}{n} \sum_{i = 1}^n \| x^m_0 - \theta^m_i \|^2 \,. &\qedhere
\end{align*}
\end{proof}

\begin{lemma}\label{lem:non_convex_2}
Consider the iterates $\{x^m_t\}$ of Algorithm~\ref{algo:svrg_saga_1_2} and the new snapshot point at the end of the $m^{th}$ outer loop, $\tilde{x}^{m+1} = \frac{1}{\ell} \sum_{t = 0}^{\ell -1}x_t^m$.  Then the  following relation holds:
\begin{align}
 \bbE_{m} \|x^{m+1} - \tilde{x}^{m+1} \|^2 \leq \frac{\eta^2 (\ell +1)(2\ell +1)}{6\ell} \sum_{t=0}^{\ell -1} \bbE_{t,m} \|v_t^m \|^2 \leq \eta^2 \ell \bbE \sum_{t=0}^{\ell -1} \bbE_{t,m} \|v_t^m \|^2 =\eta^2 \ell \bbE_{\ell ,m} \|V^m \|_F^2\,. \label{eq:lem_non_convex_2}
 \end{align}
\end{lemma}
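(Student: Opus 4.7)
The plan is to express the difference $x^{m+1} - \tilde{x}^{m+1}$ as a single linear combination of the inner-loop search directions $\{v_t^m\}_{t=0}^{\ell-1}$, then apply Cauchy--Schwarz to bound its squared norm in terms of $\sum_t \|v_t^m\|^2$. Recall that in the non-convex regime we set $\mu = 0$, so the normalization constant satisfies $S_\ell = \ell$ and $\tilde{x}^{m+1} = \frac{1}{\ell}\sum_{t=0}^{\ell-1} x_t^m$. Since the inner loop performs $x_{t+1}^m = x_t^m - \eta v_t^m$, we can unroll to obtain $x_t^m = x_0^m - \eta \sum_{j=0}^{t-1} v_j^m$ and in particular $x^{m+1} = x_\ell^m = x_0^m - \eta \sum_{j=0}^{\ell-1} v_j^m$.

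The first step is a straightforward algebraic rearrangement: substituting the unrolled iterates into $x^{m+1} - \tilde{x}^{m+1}$ and swapping the order of summation in $\sum_{t=0}^{\ell-1}\sum_{j=0}^{t-1} v_j^m = \sum_{j=0}^{\ell-1}(\ell-1-j)\,v_j^m$ gives, after combining with the $-\eta \sum_j v_j^m$ term, the compact identity
\begin{align}
 x^{m+1} - \tilde{x}^{m+1} = -\frac{\eta}{\ell} \sum_{j=0}^{\ell-1} (j+1)\, v_j^m \,.
\end{align}
This is the key observation: the ``overshoot'' of the last iterate relative to the running average is simply a linearly weighted sum of the $v_j^m$, with weights $(j+1)/\ell$ that grow with the index.

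The second step is to bound the squared norm. Applying Cauchy--Schwarz coordinate-wise (equivalently, $\|\sum_j a_j v_j\|^2 \leq (\sum_j a_j^2)(\sum_j \|v_j\|^2)$ by summing the scalar Cauchy--Schwarz inequality over components) with weights $a_j = j+1$ yields
\begin{align}
 \Big\| \sum_{j=0}^{\ell-1}(j+1)\,v_j^m \Big\|^2 \leq \Big(\sum_{j=0}^{\ell-1}(j+1)^2 \Big) \sum_{j=0}^{\ell-1} \|v_j^m\|^2 = \frac{\ell(\ell+1)(2\ell+1)}{6} \sum_{j=0}^{\ell-1} \|v_j^m\|^2 \,.
\end{align}
Multiplying by $\eta^2/\ell^2$, taking $\bbE_m$ on both sides, and using the tower property $\bbE_m \|v_j^m\|^2 = \bbE_{j+1,m}\|v_j^m\|^2 = \bbE_{t,m}\|v_t^m\|^2$ (relabeling the index) gives the first inequality of~\eqref{eq:lem_non_convex_2}. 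The second inequality is a trivial numerical simplification: $(\ell+1)(2\ell+1)/(6\ell) \leq \ell$ is equivalent to $4\ell^2 - 3\ell - 1 \geq 0$, which holds for all integers $\ell \geq 1$. The final equality is just the definition of $\|V^m\|_F^2$ together with the tower property, as already noted in Appendix~\ref{app:mixed}.

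There is no real obstacle here; the only subtle point is the index bookkeeping when reversing the double sum $\sum_{t}\sum_{j<t}$, and ensuring that the correct conditional expectation is used so that the final statement matches the $\bbE_{t,m}\|v_t^m\|^2$ notation used elsewhere in the non-convex analysis.
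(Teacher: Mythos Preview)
Your proof is correct and follows essentially the same route as the paper: express $x^{m+1}-\tilde{x}^{m+1}$ as $-\tfrac{\eta}{\ell}\sum_{j=0}^{\ell-1}(j+1)v_j^m$, then apply Cauchy--Schwarz with weights $(j+1)$ and use $\sum_{j=1}^{\ell} j^2 = \ell(\ell+1)(2\ell+1)/6$. The only cosmetic difference is that the paper obtains the identity via $x_\ell^m - \tilde{x}^{m+1} = \tfrac{1}{\ell}\sum_{t=0}^{\ell-1}(x_\ell^m - x_t^m)$ and then swaps the resulting double sum, whereas you unroll both $x^{m+1}$ and $\tilde{x}^{m+1}$ from $x_0^m$ separately; the algebra is equivalent.
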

\begin{proof}
\begin{align}
\bbE_{m} \|x^{m+1} - \tilde{x}^{m+1} \|^2 &= \bbE_{\ell ,m} \|x^{m+1} - \tilde{x}^{m+1} \|^2 = \bbE_{\ell ,m} \|x^{m}_{\ell} -  \tilde{x}^{m+1}  \|^2 \notag \\
& =  \bbE_{\ell ,m} \Big \|x^{m}_\ell - \frac{1}{\ell} \sum_{t = 0}^{\ell -1}x_t^m \Big \|^2 = \frac{1}{{\ell}^2} \bbE_{\ell ,m} \Big \| \sum_{t = 0}^{\ell -1} \left(x^{m}_\ell - x_t^m\right) \Big \|^2 \notag \\ 
& =  \frac{1}{{\ell}^2} \bbE_{\ell ,m}  \Big \| -\eta  \sum_{t = 0}^{\ell -1} (i +1)v_t^m \Big \|^2 \notag \\
& = \frac{\eta^2}{{\ell}^2 } \bbE_{\ell ,m} \Big \|   \sum_{t = 0}^{\ell -1} (i +1)v_t^m \Big \|^2 \,. \label{eq:non_convex_lem2_eq_1}
\end{align}
Applying Cauchy-Schwarz in~\eqref{eq:non_convex_lem2_eq_1} gives, 
\begin{align}
\bbE_{\ell ,m}  \Big \|   \sum_{t = 0}^{\ell -1} (i +1)v_t^m \Big \|^2 \leq \frac{\ell(\ell +1)(2\ell +1)}{6} \sum_{t=0}^{\ell -1} \bbE_{t+1,m}\|v_t^m \|^2\,,
\end{align}
from which the final expression follows:
\begin{align*}
\bbE_{m} \|x^{m+1} - \tilde{x}^{m+1} \|^2 \leq \frac{\eta^2 (\ell +1)(2\ell +1)}{6 \ell} \sum_{t=0}^{\ell -1} \bbE_{t+1,m}\|v_t^m \|^2 \leq \eta^2 \ell \sum_{t=0}^{\ell -1} \bbE_{t+1,m}\|v_t^m \|^2 =\eta^2 \ell \bbE_{m} \|V^m \|_F^2\,. &\qedhere
\end{align*}
\end{proof}

\begin{proof}[\textbf{Proof of Lemma~\ref{lemm:non_convex_lemm1}}]
We take the expectation of the Lyapunov function:
\begin{align}
\bbE_{\ell ,m}' \mathcal{L}^{m+1}(x_0^{m+1}) 
& =\bbE_{\ell ,m}f(x^{m+1}_0) + \frac{c_{m+1} }{n} \sum_{i=1}^n \bbE_{\ell ,m}' \| x_0^{m+1} - \theta_i^{m+1}\|^2\,.
\end{align}
Note that we here only analyze \mVtwo{} for which the samples to update the snapshot point are independent of the samples used to generate the sequence $x_t^m$. Also recall that $q = \ell$.

First we consider the second part of the Lyapunov function which is $ \frac{c_{m+1}}{n} \sum_{i =1}^n \bbE_{\ell ,m}' \| x^{m+1}_0 - \theta^{m+1}_i \|^2$ and find its recurrence relation with $\frac{c_{m}}{n} \sum_{i =1}^n \| x^{m}_0 - \theta^{m}_i \|^2$.  
\begin{align}
\bbE_{\ell ,m}'H^{m+1} &=\sum_{i =1}^n \bbE_{\ell ,m}'\| x^{m+1}_0 - \theta^{m+1}_i \|^2 =  \sum_{i =1}^n \left( \frac{\ell}{n}  \bbE_{\ell ,m} \|x^{m+1}_0 - \tilde{x}^{m+1} \|^2 + \frac{n-\ell }{n} \bbE_{\ell ,m} \|x^{m+1}_0 - \theta_i^m  \|^2 \right) \notag \\
&= \ell \bbE_{\ell ,m}\|x^{m+1}_0 - \tilde{x}^{m+1} \|^2 + \left( 1 - \frac{\ell}{n} \right)\sum_{i =1}^n  \bbE_{\ell ,m} \|x^{m+1}_0 - \theta_i^m  \|^2\,. \label{eq:lemm2_non_convex_p_1}
\end{align}
From Lemma~\ref{lem:non_convex_2}, we know that:
\begin{align}
 \bbE_{\ell ,m} \|x^{m+1} - \tilde{x}^{m+1} \|^2 \leq \frac{\eta^2 (\ell +1)(2\ell +1)}{6\ell} \sum_{t=0}^{\ell -1} \bbE_{t,m}\|v_t^m \|^2 \leq \eta^2 \ell \bbE_{m}\|V^m \|_F^2\,.
\end{align}
We consider now the second term in~\eqref{eq:lemm2_non_convex_p_1}, keeping in mind that $x^{m+1}_0 = x^m_{\ell}$:
\begin{align}
\bbE_{\ell ,m}\|x^{m+1}_0 - \theta_i^m  \|^2 |  & = \bbE_{\ell ,m} \|x^{m+1}_0 -x^{m}_0 + x^{m}_0 -\theta_i^m  \|^2   \notag \\
&= \bbE_{\ell ,m} \left[ \|x^{m+1}_0 -x^{m}_0\|^2 + \|x^{m}_0 -\theta_i^m  \|^2  - 2 \big\langle x^m_0 - x^{m+1}_0, x^{m}_0 -\theta_i^m \big\rangle  \right] \notag \\
&= \bbE_{\ell ,m} \left[ \|x^{m+1}_0 -x^{m}_0\|^2 + \|x^{m}_0 -\theta_i^m  \|^2  - 2 \sum_{t = 0}^{\ell -1} \big\langle x^m_t - x^{m}_{t+1}, x^{m}_0 -\theta_i^m \big\rangle \right] \notag \\
&= \bbE_{\ell ,m} \left[ \|x^{m+1}_0 -x^{m}_0\|^2 \right] + \|x^{m}_0 -\theta_i^m  \|^2 - 2 \bbE_{\ell ,m} \left[ \Big\langle \sum_{t = 0}^{\ell -1} (x^m_t- x^{m}_{t+1}), x^{m}_0 -\theta_i^m \Big\rangle  \right] \notag \\
&= \bbE_{\ell ,m} \left[ \|x^{m+1}_0 -x^{m}_0\|^2 \right] + \|x^{m}_0 -\theta_i^m  \|^2 - 2 \left[ \Big\langle \sum_{t = 0}^{\ell -1}  \bbE_{\ell ,m}[x^m_t - x^{m}_{t+1}], x^{m}_0 -\theta_i^m \Big\rangle  \right] \notag \\
&= \bbE_{\ell ,m} \left[ \|x^{m+1}_0 -x^{m}_0\|^2 \right] + \|x^{m}_0 -\theta_i^m  \|^2 - 2 \left[ \Big\langle \sum_{t = 0}^{\ell -1}  \bbE_{t+1,m}[x^m_t - x^{m}_{t+1}], x^{m}_0 -\theta_i^m \Big\rangle  \right] \notag \\
&= \bbE_{\ell ,m} \left[ \|x^{m+1}_0 -x^{m}_0\|^2 \right] + \|x^{m}_0 -\theta_i^m  \|^2 - 2 \left[ \Big\langle \sum_{t = 0}^{\ell -1}  \bbE_{t,m}\bbE_{\{t\}}[x^m_t - x^{m}_{t+1}], x^{m}_0 -\theta_i^m \Big\rangle  \right] \notag \\
&= \bbE_{\ell ,m} \left[ \|x^{m+1}_0 -x^{m}_0\|^2 \right] + \|x^{m}_0 -\theta_i^m  \|^2 - 2\eta \left[  \Big\langle   \sum_{t = 0}^{\ell -1} \bbE_{t,m }\nabla f(x^m_t), x^{m}_0 -\theta_i^m \Big\rangle \right] \notag \\
&\stackrel{\eqref{eq:dot_product_decom}}{=}\bbE_{\ell ,m} \left[ \|x^{m+1}_0 -x^{m}_0\|^2 \right] + \|x^{m}_0 -\theta_i^m  \|^2  \notag \\
& \qquad \qquad \qquad \qquad \quad   + 2\eta\left[    \frac{1}{2\gamma}  \sum_{t = 0}^{\ell -1} \| \bbE_{t,m }\nabla f(x^m_t) \|^2 + \frac{\gamma \ell }{2} \|x^{m}_0 -\theta_i^m\|^2  \right] \tag{$\gamma > 0$} \\ 
&\leq \bbE_{\ell ,m} \left[ \|x^{m+1}_0 -x^{m}_0\|^2 \right] + \|x^{m}_0 -\theta_i^m  \|^2 \notag  \\
& \qquad \qquad \qquad \qquad \qquad \qquad  + 2\eta \left[    \frac{1}{2\gamma}  \sum_{t = 0}^{\ell -1} \bbE_{t,m }\| \nabla f(x^m_t) \|^2 + \frac{\gamma \ell }{2} \|x^{m}_0 -\theta_i^m\|^2  \right] \notag \\ 
&= \bbE_{\ell ,m} \|x^{m+1}_0 -x^{m}_0\|^2 + \left(1 +  \gamma \eta \ell\right) \|x^{m}_0 -\theta_i^m  \|^2  +  \frac{\eta}{\gamma} \sum_{t=0}^{\ell -1} \bbE_{t,m} \ \|\nabla f(x^m_t)\|^2 \notag \\
&=  \eta^2 \ell \sum_{t=0}^{\ell -1}\bbE_{\ell ,m}\|v^m_t \|^2 + \left(1 +  \gamma \eta \ell\right)  \|x^{m}_0 -\theta_i^m  \|^2  +  \frac{\eta}{\gamma} \sum_{t=0}^{\ell -1} \bbE_{t,m} \ \|\nabla f(x^m_t)\|^2 \notag \\
&=  \eta^2 \ell \sum_{t=0}^{\ell -1}\bbE_{t+1,m}\|v^m_t \|^2 + \left(1 +  \gamma \eta \ell\right)  \|x^{m}_0 -\theta_i^m  \|^2  +  \frac{\eta}{\gamma} \sum_{t=0}^{\ell -1} \bbE_{t,m} \ \|\nabla f(x^m_t)\|^2\,. \label{eq:lem2_cond_exp_2}
\end{align}
Combining equation~\eqref{eq:lem2_cond_exp_2} and Lemma~\ref{lem:non_convex_2}, we get:
\begin{align}
\frac{1}{n}\sum_{i =1}^n \bbE_{\ell ,m}'\| x^{m+1}_0 - \theta^{m+1}_i \|^2 &\leq  \frac{\eta^2 (\ell +1)(2\ell +1)}{6n} \sum_{t=0}^{\ell -1} \bbE_{t+1,m} \|v_t^m \|^2 + \left( 1 - \frac{\ell}{n}\right) \eta^2 \ell \sum_{t=0}^{\ell -1} \bbE_{t+1,m} \|v_t^m \|^2  \notag \\
& \qquad +\left( 1 - \frac{\ell}{n}\right)\left[\frac{(1+ \gamma \eta \ell)}{n}\sum_{i=0}^{n} \|x^m_0 - \theta^m_i \|^2 + \frac{\eta}{\gamma } \sum_{t=0}^{\ell -1} \bbE_{t,m} \ \|\nabla f(x^m_t)\|^2 \right] \notag \\
&=  \eta^2 \sum_{t=0}^{\ell -1} \bbE_{t+1,m} \|v_t^m \|^2\left( \frac{(\ell +1)(2\ell +1)}{6n} + \frac{\ell (n-\ell )}{n}\right) \notag\\
& \qquad  + \left( 1 - \frac{\ell}{n}\right)\frac{\eta}{\gamma}\sum_{t=0}^{\ell -1} \bbE_{t,m} \|\nabla f(x^m_t)\|^2 
 + \left(1+\gamma \eta \ell\right)\left(1 - \frac{\ell}{n}\right)\frac{1}{n} \sum_{i=0}^{n} \|x^m_0 - \theta^m_i \|^2 \notag \\
&\stackrel{\eqref{eq:lem_non_convex_2}}{\leq}  \eta^2 \ell \sum_{t=0}^{\ell -1} \bbE_{t+1,m}\|v_t^m \|^2 +  \left( 1 - \frac{\ell}{n}\right)\frac{\eta}{\gamma}\sum_{t=0}^{\ell -1} \bbE_{t,m}  \|\nabla f(x^m_t)\|^2 \notag \\
&\qquad \qquad \qquad \qquad \qquad  + \left(1+\gamma \eta \ell\right)\left(1 - \frac{\ell}{n}\right)\frac{1}{n} \sum_{i=0}^{n} \|x^m_0 - \theta^m_i \|^2 \notag \\
&= \eta^2 \ell  \bbE_{\ell ,m} \| V^m \|_F^2 + \left( 1 - \frac{\ell}{n}\right)\frac{\eta}{\gamma}  \bbE_{\ell ,m} \|\nabla F^m \|_F^2 \notag \\
&\qquad \qquad \qquad \qquad \qquad + \left(1+\gamma \eta \ell\right)\left(1 - \frac{\ell}{n}\right)\frac{1}{n} \sum_{i=0}^{n} \|x^m_0 - \theta^m_i \|^2 \,.
\end{align}
Hence, we have:
\begin{align*}
\bbE_{\ell ,m}' H^{m+1} \leq  \eta^2 \ell  \bbE_{\ell ,m} \| V^m \|_F^2  + \left( 1 - \frac{\ell}{n}\right)\frac{\eta}{\gamma} \bbE_{\ell ,m} \|\nabla F^m \|_F^2+\left(1+\gamma \eta \ell\right)\left(1 - \frac{\ell}{n}\right) H^m\,.   &\qedhere
\end{align*}
\end{proof}

\begin{proof}[\textbf{Proof of Lemma~\ref{lemm:non_convex_lemm2}}]
From Lemma~\ref{lem:lem_non_convex_1}, we have:
\begin{align}
\bbE_{t+1,m} \|v^m_t \|^2 \leq  2 \bbE_{t,m} \big\| \nabla f(x^m_t) \big \|^2 + 4L^2 \eta^2 t  \sum_{j =0}^{t-1}\bbE_{j+1,m} \| v_j^m \|^2 + \frac{4L^2}{n} \sum_{i = 1}^n \| x^m_0 - \theta^m_i \|^2\,. \label{eq:result_from_lem_nc1}
\end{align}
We sum the equation~\eqref{eq:result_from_lem_nc1} for $t=0$ to $t=\ell -1$ to get the following:
\begin{align}
\sum_{t=0}^{\ell -1} \bbE_{t+1,m} \|v^m_t \|^2 &\leq 2\sum_{t=0}^{\ell -1} \bbE_{t,m} \big\| \nabla f(x^m_t) \big \|^2 +4 L^2\eta^2 \sum_{t=0}^{\ell -1} t  \sum_{j =0}^{t-1}\bbE_{j+1,m} \| v_j^m \|^2 + \frac{4L^2l}{n} \sum_{i = 1}^n \| x^m_0 - \theta^m_i \|^2  \notag \\
&\leq  2 \bbE_{\ell ,m}\|\nabla F^m\|_F^2 + 2 L^2\eta^2 \ell(\ell -1) \sum_{t=0}^{\ell -1} \bbE_{t+1,m} \|v^m_t \|^2 + \frac{4L^2\ell}{n} \sum_{i = 1}^n \| x^m_0 - \theta^m_i \|^2  \notag \\
& \leq  2 \bbE_{\ell ,m}\|\nabla F^m\|_F^2 + 2 L^2\eta^2 {\ell}^2 \sum_{t=0}^{\ell -1} \bbE_{t+1,m} \|v^m_t \|^2 + \frac{4L^2 \ell}{n} \sum_{i = 1}^n \| x^m_0 - \theta^m_i \|^2 \,.
\end{align}
Since, $\sum_{t=0}^{\ell -1} \bbE_{t+1,m} \|v^m_t \|^2 = \bbE_{m} \|V^m \|_F^2$, we get the following relation:
\begin{align}
(1-2L^2\eta^2{\ell}^2)\bbE_{\ell ,m} \| V^m\|_F^2  &\leq  2 \bbE_{\ell ,m}\|\nabla F^m\|_F^2 + \frac{4L^2 \ell}{n} \sum_{i = 1}^n  \| x^m_0 - \theta^m_i \|^2 \notag \\
&\leq  2 \bbE_{\ell ,m}\|\nabla F^m\|_F^2 + {4L^2\ell} H^m\,.
\end{align}
Hence, finally we have:
\begin{align}
(1-2L^2\eta^2{\ell}^2)\bbE_{m} \| V^m\|_F^2 \leq \bbE_{m} \|\nabla F^m\|_F^2 + {4L^2\ell}  H^m\,. \label{eq:lem_7_final}
\end{align}
\begin{remark}
Unfortunately, equation~\eqref{eq:lem_7_final} limits us to choose $\ell$ as large as we would like (e.g. $\ell =n$ in case $k=1$), as otherwise the term  $(1-2L^2\eta^2{\ell}^2)$ would become too small. In the proof of Theorem~\ref{thm:main_theorem_non_convex} we will choose $\eta = \cO(\frac{1}{Ln^{2/3}})$ and hence $\ell $ should be less than of the order of $\mathcal{O}(n^{2/3})$.
\end{remark}
\vspace{-1.65em}
\end{proof}
\begin{proof}[\textbf{Proof of Lemma~\ref{lem:non_convex_general_result}}]
The Lyapunov function is of the form:
\begin{align}
\bbE_{\ell ,m}' \mathcal{L}^m(x^{m+1}_0) 
 &= \bbE_{\ell ,m} f(x^{m+1}_0) + \frac{c_{m+1}}{n} \sum_{i =1}^n \bbE_{\ell ,m}' \| x^{m+1}_0 - \theta_i^m \|^2\,.
\end{align}
First we analyze the term $\bbE_{\ell ,m} f(x^{m+1}_0)$ in the Lyapunov function. By the smoothness assumption:
\begin{align}
f(x_{t+1}^m) \leq f(x_t^m) + \big \langle \nabla f(x_t^m) , x_t^m - x_{t+1}^m \big\rangle +\frac{L}{2} \| x_t^m - x_{t+1}^m \|^2\,.
\end{align} 
Now if we take expectation conditioned on $x_t^m$, we get:
\begin{align}
\bbE_{\{i_t\}}  f(x_{t+1}^m) & \leq f(x_t^m) - \eta \|\nabla f(x_t^m)  \|^2 +\frac{\eta^2 L}{2} \bbE_{\{i_t\}} \| v_t^m\|^2\,. \label{eq:cond_exp_smooth}
  \end{align}
In equation~\eqref{eq:cond_exp_smooth}, we apply the property of tower of conditional expectations and sum equation~\eqref{eq:cond_exp_smooth} from $t =0$ to $t =\ell -1$ in the $m^{th}$ outer loop  to get the following:
\begin{align}
&\bbE_{\ell ,m} f(x^m_{\ell}) \leq f(x^m_0) - \eta  \sum_{t = 0}^{\ell -1}\bbE_{t,m}\|\nabla f(x^m_{t}) \|^2 + \frac{\eta^2 L}{2} \sum_{t = 0}^{\ell -1} \bbE_{t+1,m} \|v_t^m \|^2\,.
\end{align}
Hence, we have:
\begin{align}
\bbE_m f(x^{m+1}_0)  \leq f(x^m_0) - \eta \bbE_{m} \|\nabla F^m \|_F^2  + \frac{\eta^2 L}{2} \bbE_{m} \|V^m \|_F^2\,. \label{eq:smooth_telescopic_sum}
\end{align}
We now analyze the complete Lyapunov function by using the results from Lemmas~\ref{lemm:non_convex_lemm1} and \ref{lemm:non_convex_lemm2}.
\begin{align}
\bbE_{\ell ,m}' \mathcal{L}^m(x^{m+1}_0)  
&= \bbE_{\ell ,m} f(x^{m+1}_0) + \frac{c_{m+1}}{n} \sum_{i =1}^n \bbE_{\ell ,m}'  \| x^{m+1}_0 - \theta_i^m \|^2 \notag \\
&\stackrel{\eqref{eq:smooth_telescopic_sum}} {\leq}  f(x^m_0) - \eta \bbE_{\ell ,m} \|\nabla F^m \|_F^2  + \frac{\eta^2 L}{2}  \bbE_{\ell ,m} \|V^m \|_F^2 + \frac{c_{m+1}}{n} \sum_{i =1}^n \bbE_{\ell ,m}  \| x^{m+1}_0 - \theta_i^m \|^2 \notag \\
&\stackrel{\eqref{eq:lemm_non_convex_lemm1}}{\leq} f(x^m_0) - \eta \bbE_m \|\nabla F^m \|_F^2  + \frac{\eta^2 L}{2}  \bbE_m \|V^m \|_F^2 \notag \\ & \qquad  + c^{m+1}\eta^2 \ell  \bbE_{m} \| V^m \|_F^2 + c^{m+1}\left( 1 - \frac{\ell}{n}\right)\frac{\eta}{\gamma} \bbE_{m} \|\nabla F^m \|_F^2  \notag \\
&  \qquad + c^{m+1}\left(1+\gamma \eta \ell\right)\left(1 - \frac{\ell}{n}\right)\frac{1}{n} \sum_{i=0}^{n} \|x^m_0 - \theta^m_i \|^2 \notag \\
&=  f(x^m_0) - \left( \eta -  c^{m+1}\left( 1 - \frac{\ell}{n}\right)\frac{\eta}{\gamma} \right) \bbE_{m} \|\nabla F^m \|_F^2 + \left( \frac{\eta^2L}{2}+ c^{m+1}\eta^2\ell\right) \bbE_{m} \| V^m\|_F^2 \notag \\
& \qquad  + \left[ c^{m+1}\left(1+\gamma \eta \ell\right)\left(1 - \frac{\ell}{n}\right) \right]\frac{1}{n} \sum_{i=0}^{n} \|x^m_0 - \theta^m_i \|^2 \,.
\end{align}
Let $b_1 := \frac{1}{1 - 2L^2 \eta^2 {\ell}^2}$, as in the main text. Hence from Lemma~\ref{lemm:non_convex_lemm2}, we get:
\begin{align}
\bbE_{\ell ,m}' \mathcal{L}^m(x^{m+1}_0) &\leq f(x^m_0) - \left( \eta -  c^{m+1}\left( 1 - \frac{\ell}{n}\right)\frac{\eta}{\gamma} \right) \bbE_{\ell ,m} \|\nabla F^m \|_F^2 \notag \\
&  \qquad + \left( \frac{\eta^2L}{2}+ c^{m+1}\eta^2\ell \right) \left[ 2b_1 \bbE_{\ell ,m}  \|\nabla F^m \|_F^2 + \frac{4b_1L^2\ell}{n} \sum_{i =i}^n  \|x^m_0 - \theta^m_i \|^2\right]\notag \\
&  \qquad + \left( c^{m+1}\left(1+\gamma \eta \ell\right)\left(1 - \frac{\ell}{n}\right) \right)\frac{1}{n} \sum_{i=0}^{n} \|x^m_0 - \theta^m_i \|^2 \notag \\
&\leq f(x^m_0) - \left( \eta -  c^{m+1}\left( 1 - \frac{\ell}{n}\right)\frac{\eta}{\gamma} - b_1 \eta^2 L - 2 b_1 c^{m+1}\eta^2 \ell\right)\bbE_{\ell ,m} \|\nabla F^m \|_F^2 \notag \\
&\qquad   +\left[ c^{m+1}\left(1+\gamma \eta \ell\right)\left(1 - \frac{\ell}{n}\right) + 2b_1\eta^2L^3 \ell +4b_1 c^{m+1} \eta^2L^2 {\ell}^2 \right] \frac{1}{n}\sum_{i=1}^n \|x^m_0 - \theta_i^m \|^2 \notag \\
&\leq f(x^m_0) - \left( \eta -  c^{m+1}\left( 1 - \frac{\ell}{n}\right)\frac{\eta}{\gamma} - b_1 \eta^2 L - 2 b_1 c^{m+1}\eta^2 \ell\right) \bbE_{\ell ,m} \|\nabla F^m \|_F^2 \notag \\
&\qquad   +\left[ c^{m+1}\left(1+\gamma \eta \ell\right)\left(1 - \frac{\ell}{n}\right) + 2b_1\eta^2L^3 \ell +4b_1 c^{m+1} \eta^2L^2 {\ell}^2 \right] \frac{1}{n}\sum_{i=1}^n \|x^m_0 - \theta_i^m \|^2 \notag \\
&\leq f(x^m_0) - \underbrace{\left( \eta -  c^{m+1}\frac{\eta}{\gamma} - b_1 \eta^2 L - 2 b_1 c^{m+1}\eta^2 \ell\right)}_{= \Gamma^m} \bbE_{\ell ,m} \|\nabla F^m \|_F^2 \notag \\ 
&\qquad   + \underbrace{\left[ c^{m+1}\left(1 - \frac{\ell}{n} + \gamma \eta \ell + 4b_1 \eta^2 L^2 {\ell}^2\right) + 2b_1\eta^2L^3 \ell  \right]}_{=c^m} \frac{1}{n}\sum_{i=1}^n \|x^m_0 - \theta_i^m \|^2\,. \label{eq:recurrence_final} 
\end{align}

We finally get:
\begin{align}
& \Gamma^m \bbE_{\ell ,m} \|\nabla F^m \|^2 \leq  \mathcal{L}^m(x^{m}_0) - \bbE_{\ell ,m}' \mathcal{L}^{m+1}(x^{m+1}_0)\,,
\end{align}
and the claim follows.

\end{proof}
\begin{proof}[\textbf{Proof of Theorem~\ref{thm:main_theorem_non_convex}}]

We add equation~\eqref{eq:recurrence_non_convex} from Lemma~\ref{lem:non_convex_general_result} for $m=0$ to $m=M-1$ and take expecation with respect to the joint distribution of all the selection so far which gives:
\begin{align}
\sum_{m =0}^{M-1}\Gamma^m \bbE \|\nabla F^m \|^2 \leq \mathcal{L}^0(x^{0}_0) - \bbE \mathcal{L}^M(x^{M}_0,H^{M}) \,. \label{eq:previous}
\end{align}
Since $\Gamma = \min_{0\leq m\leq M-1}$, we get
\begin{align}
 \Gamma \sum_{i =0}^{M-1}  \bbE \|\nabla F^m \|^2 \leq {\mathcal{L}^0(x_0^0) - \bbE \mathcal{L}^M(x^{M}_0)}\,,
\end{align}
from~\eqref{eq:previous} and the first part of the theorem follows. To show the second part we need to derive a lower bound on $\Gamma$ for the given parameters $\eta=  \frac{1}{5L n^{2/3}}$, $\gamma = \frac{L}{n^{1/3}}$  and $\ell =\frac{3}{2} n^{1/3}$. Observe that for these parameters $b_1 \leq 2$.

First, let us derive an upper bound on $c^m$. Let $\lambda := \ell \left( \frac{1}{n} - \gamma \eta  - 4b_1 \eta^2 L^2 \ell\right)$. We have $\frac{8 \ell}{25 n} \leq \lambda \leq 1$, where the upper bound is immediate and the lower bound follows from $\left( \frac{1}{n} - \gamma \eta  - 4b_1 \eta^2 L^2 \ell\right) \geq \left(\frac{1}{n} - \frac{1}{5n} - \frac{12}{25n} \right) = \frac{8}{25n}$, using $b_1 \leq 2$. Observe that we have $c^{m} = c^{m+1}(1-\lambda) + 2 b_1 \eta^2 L^3 \ell$. Using this relationship and $c^{M}=0$, it is easy to see that 
\begin{align}
c^m = 2b_1\eta^2 L^3 \ell \frac{1 - (1-\lambda)^{M-m}}{\lambda} \leq \frac{2b_1\eta^2 L^3 \ell}{\lambda}  \leq \frac{L}{2n^{1/3}} \,,
\end{align}
for all $m=0,\dots, M$. Now we are ready to derive a lower bound on $\Gamma^m$. 

Using $\frac{\eta }{\gamma} = \frac{1}{5L^2 n^{1/3}}$ and $c^m \leq \frac{L}{2n^{1/3}}  $, we get:
\begin{align}
\Gamma^m & \geq \frac{1}{5L n^{2/3}} - \frac{1}{10 L n^{2/3}} - b_1 \eta^2 L - 2 b_1 c^{m+1}\eta^2 \ell \notag \\
& = \frac{1}{10 L n^{2/3}} - \underbrace{\left(b_1 \eta^2 L + 2 b_1 c^{m+1}\eta^2 \ell \right) }_{=: g_1} \label{eq:g1}
\end{align}
Now we consider the term $g_1$. As $b_1 \leq 2$ we have
\begin{align}
g_1 
&\leq 2 \eta^2 L + 4 c^{m+1} \eta^2 \ell \notag \\
& \leq \frac{2}{25 L n^{4/3} } + \frac{3}{25 L n^{4/3} }  \leq \frac{1}{30 Ln^{2/3}} \,, \label{eq:g2}. 
\end{align}
where the last inequality is due to $n > 15$.
 By combining~\eqref{eq:g1} and~\eqref{eq:g2} we get $\Gamma_m \geq \frac{1}{15 Ln^{2/3}}$ for $m=0,\dots,M$. Hence, $\Gamma \geq \frac{1}{15 Ln^{2/3}}$. 
\end{proof}

\section{Additional Experimental Results} \label{app:add_expts}
\subsection{Illustrative Experiment with more $k$-SVRG variants}
\begin{figure*}[h]
\centering
\hfill
  \includegraphics[width=0.32\textwidth]{../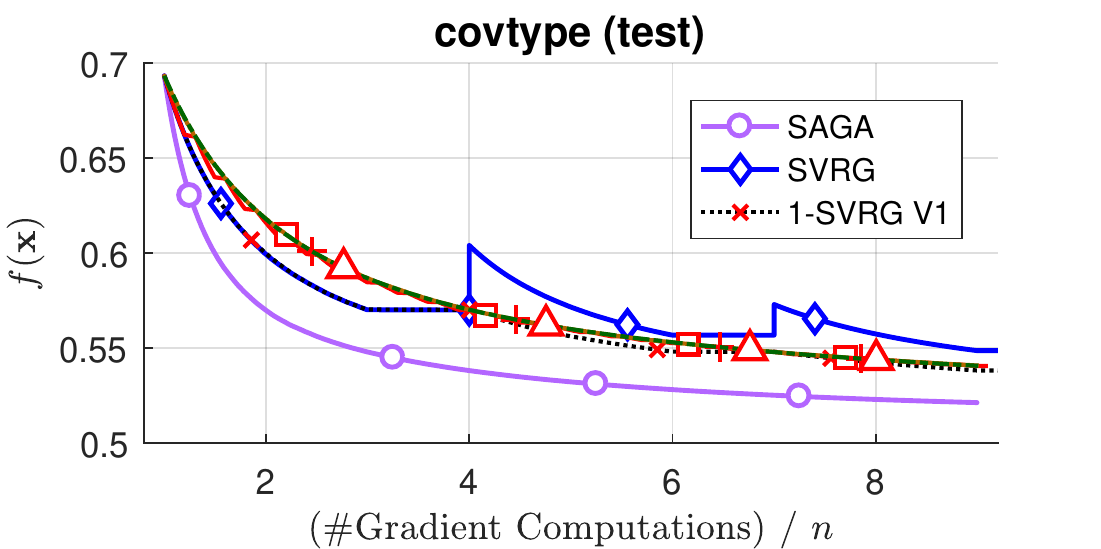}
\hfill
  \includegraphics[width=0.32\textwidth]{../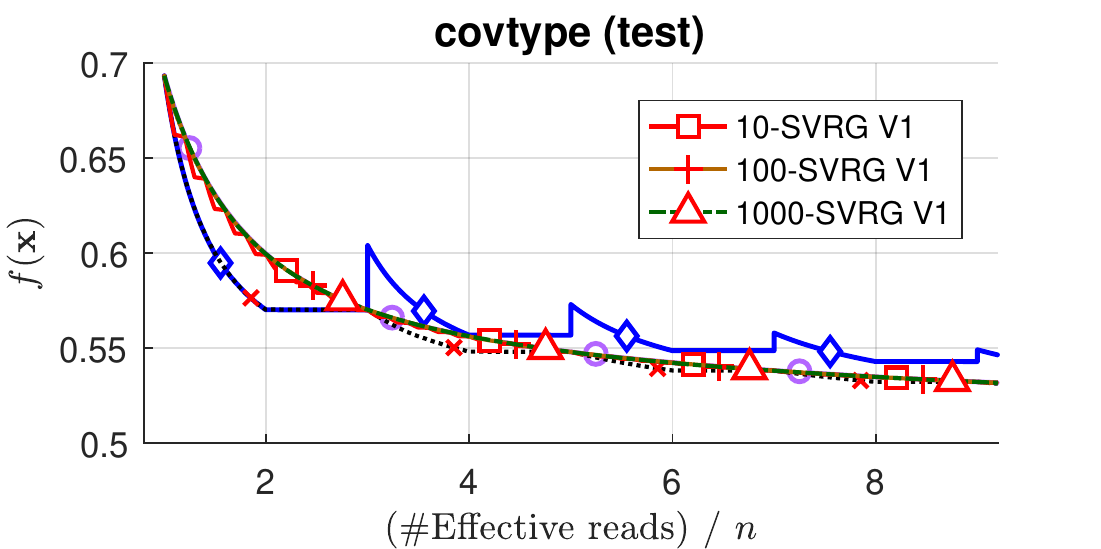}
\hfill\null  
\caption{Illustrating the different convergence behavior of SAGA, SVRG and \mVone{} for $k=\{1,10,100,1000\}$.}
\label{fig:demo}
\end{figure*}

\subsection{Dataset: \emph{covtype (test)}}
\null
\begin{multicols}{2}
\begin{figurehere}
\centering
  \includegraphics[width=.49\linewidth]{../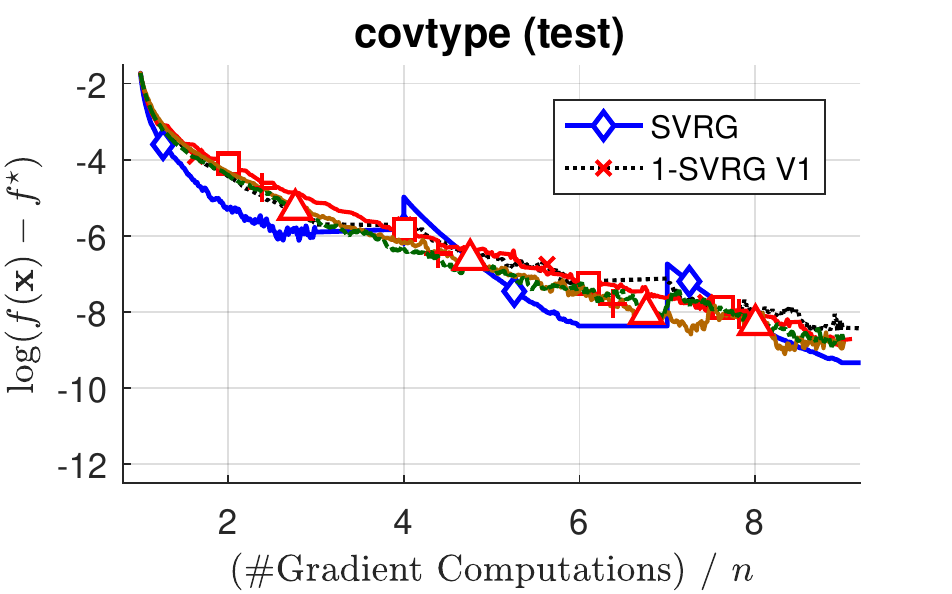}
\hfill
  \includegraphics[width=0.49\linewidth]{../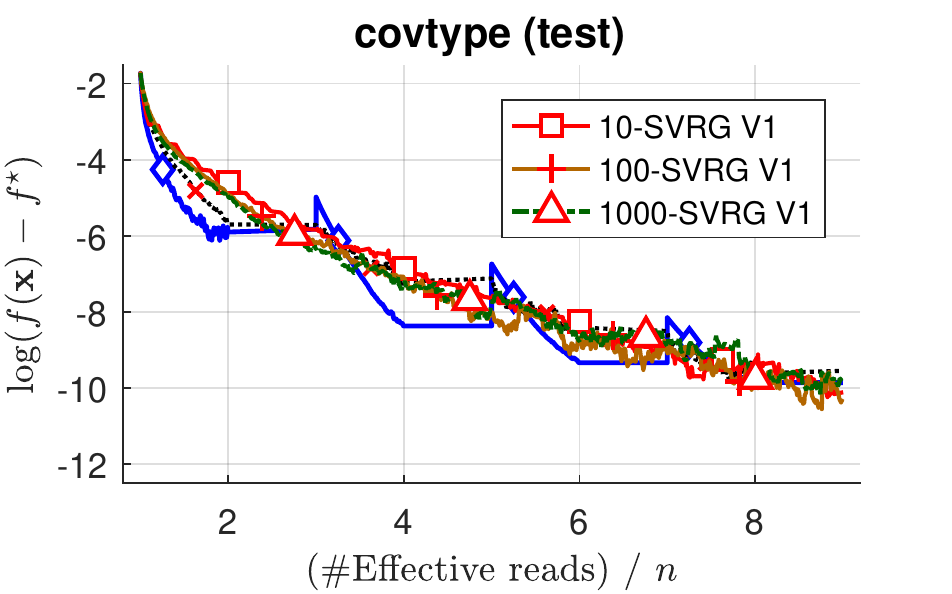}
  \vspace*{-1.5em}
\caption{Evolution of residual loss on \emph{covtype (test)} for SVRG and \mVone{} for  $k=\{1,10,100,1000\}$.}
\label{fig:compare_var1}
\end{figurehere}
\begin{figurehere}
\centering
  \includegraphics[width=0.49\linewidth]{../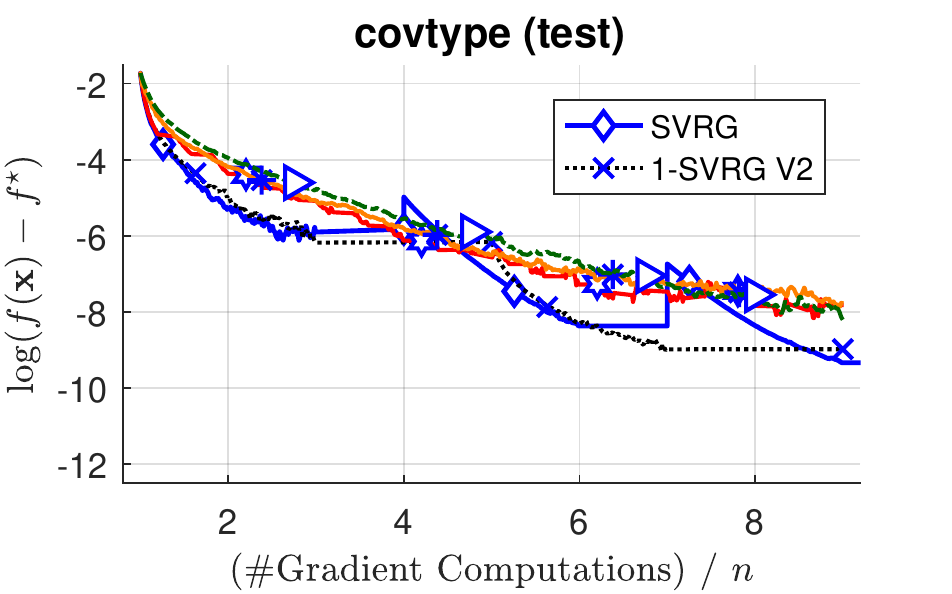}
\hfill
  \includegraphics[width=0.49\linewidth]{../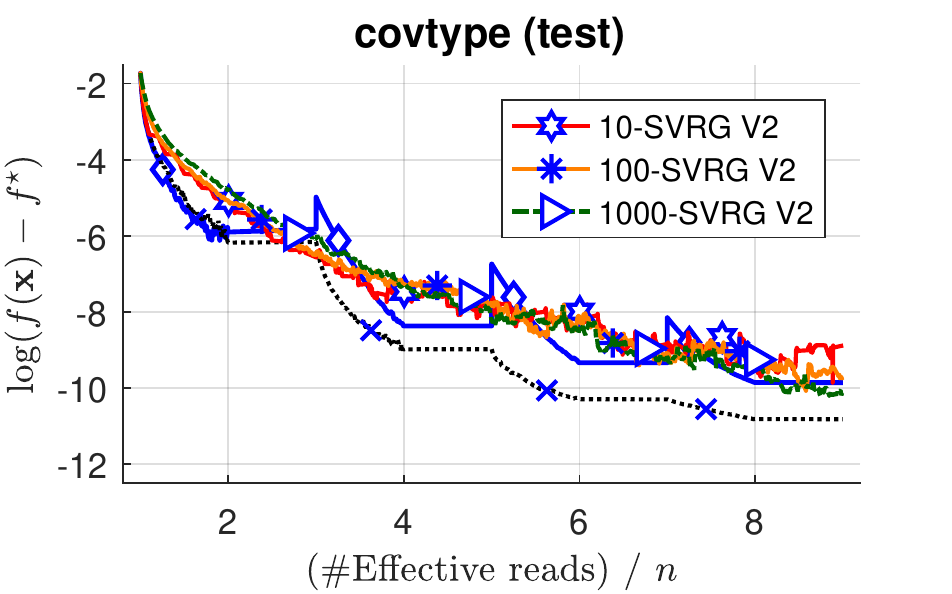}
  \vspace*{-1.5em}
\caption{Evolution of residual loss on \emph{covtype (test)} for  SVRG and \mVtwo{} for  $k=\{1,10,100,1000\}$.}
\label{fig:compare_var2}
\end{figurehere}
\end{multicols}

\begin{multicols}{2}
\begin{figurehere}
\centering
  \includegraphics[width=0.49\linewidth]{../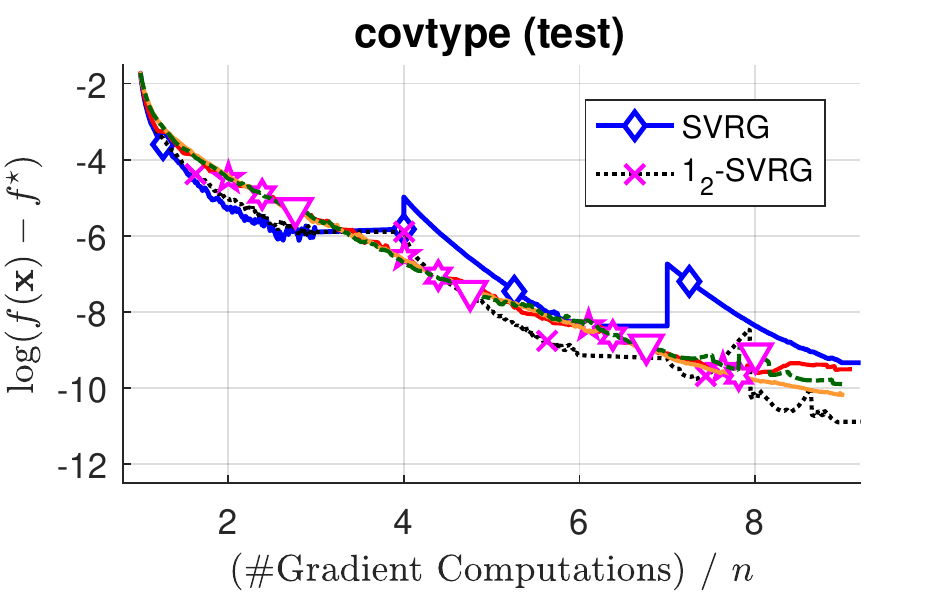}
\hfill
  \includegraphics[width=0.49\linewidth]{../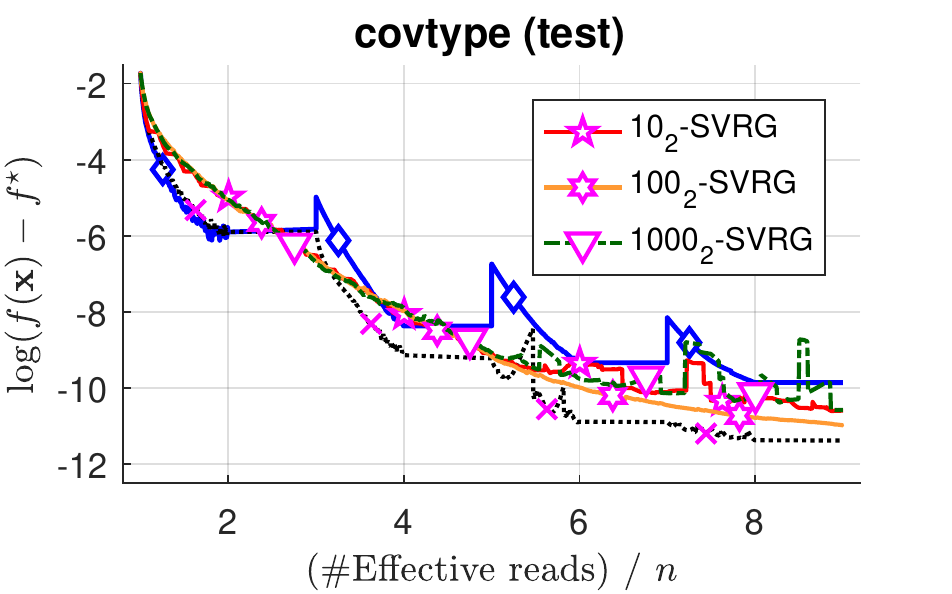}
  \vspace*{-1.5em}
\caption{Evolution of residual loss on \emph{covtype (test)} for SVRG and \mpract{} for  $k=\{1,10,100,1000\}$.}
\label{fig:compare_pract}
\end{figurehere}
\begin{figurehere}
\centering
  \includegraphics[width=0.49\linewidth]{../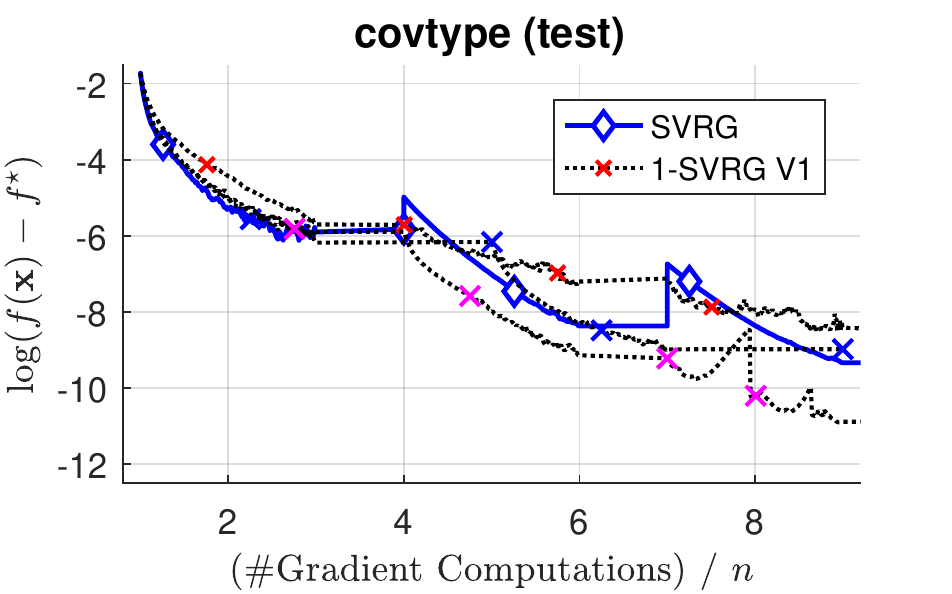}
\hfill
  \includegraphics[width=0.49\linewidth]{../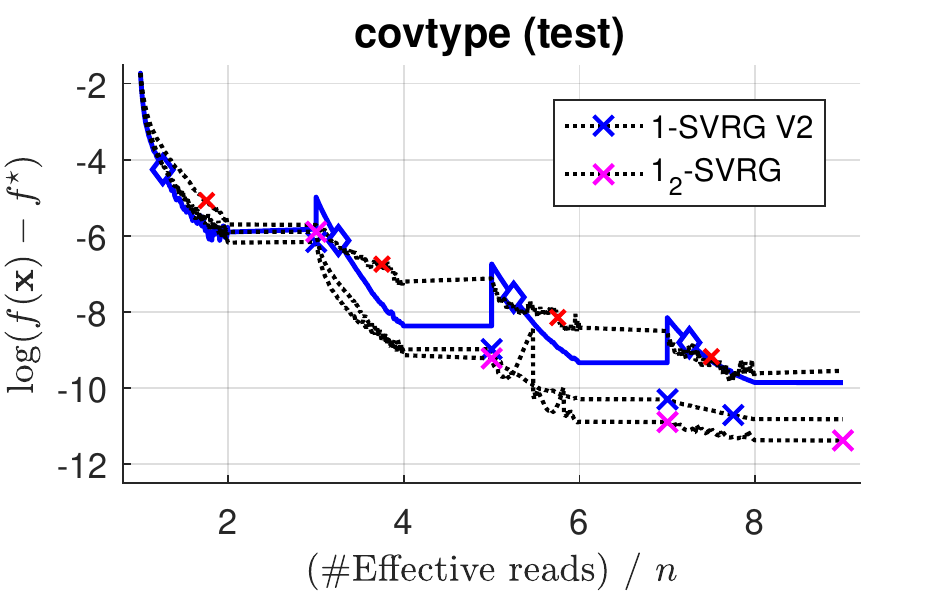}
  \vspace*{-1.5em}  
\caption{Evolution of residual loss on \emph{covtype (test)} for  SVRG, $1$\mHone, $1$\mHtwo{} and $1_2$\mHpract.}
\label{fig:compare_k1}
\end{figurehere}
\end{multicols}

\begin{multicols}{2}
\begin{figurehere}
\centering
  \includegraphics[width=0.49\linewidth]{../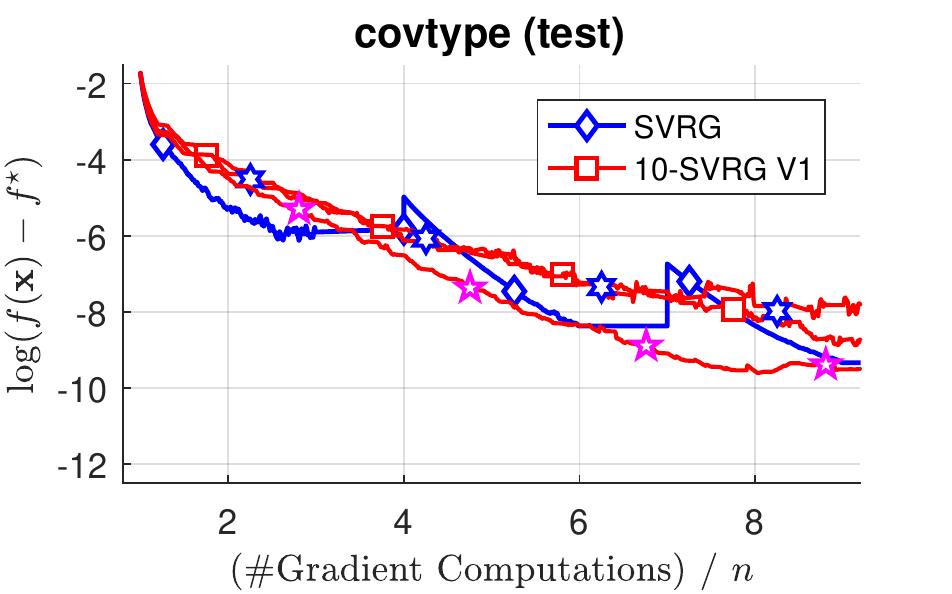}
\hfill
  \includegraphics[width=0.49\linewidth]{../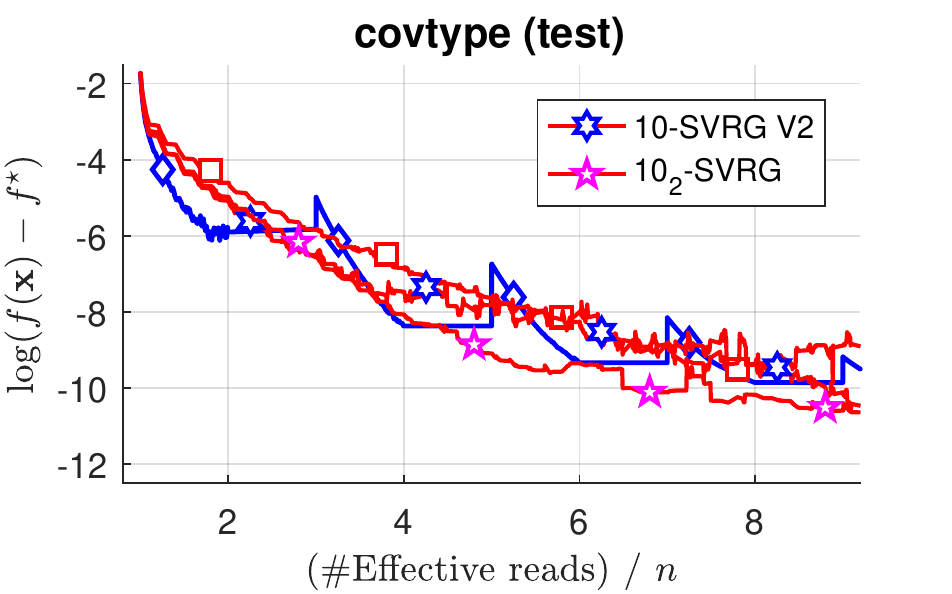}
  \vspace*{-1.5em}  
\caption{Evolution of residual loss on \emph{covtype (test)} for  SVRG, $10$\mHone, $10$\mHtwo{} and $10_2$\mHpract.}
\label{fig:compare_k10}
\end{figurehere}
\begin{figurehere}
\centering
  \includegraphics[width=0.49\linewidth]{../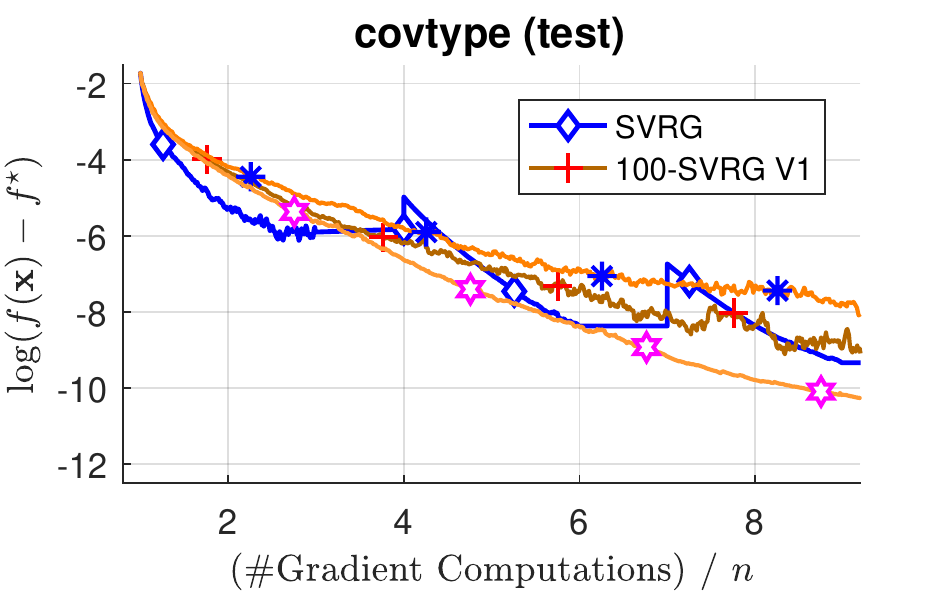}
\hfill
  \includegraphics[width=0.49\linewidth]{../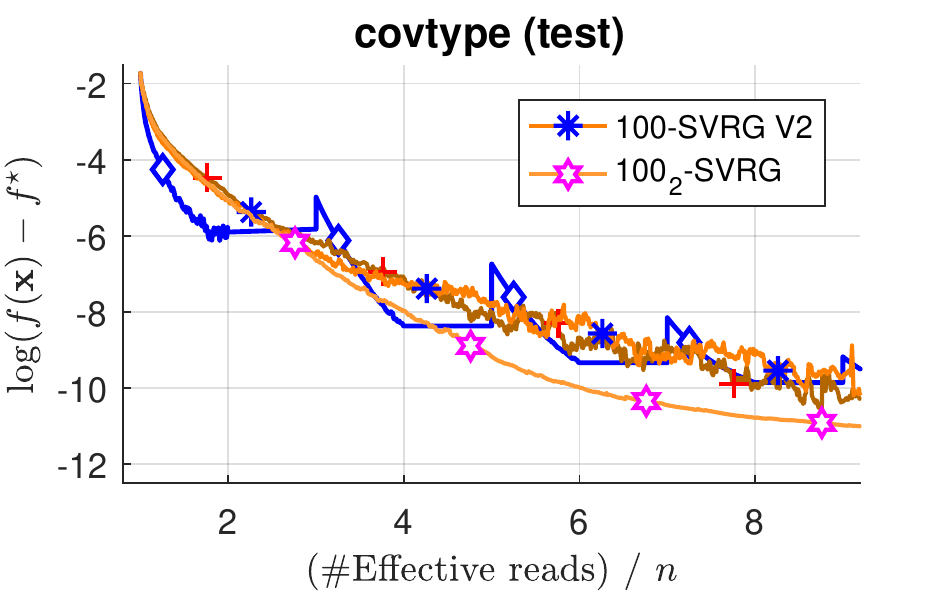}
  \vspace*{-1.5em}  
\caption{Evolution of residual loss on \emph{covtype (test)} for  SVRG, $100$\mHone, $100$\mHtwo{} and $100_2$\mHpract.}
\label{fig:compare_k100}
\end{figurehere}
\end{multicols}
\begin{multicols}{2}
\begin{figurehere}
\centering
  \includegraphics[width=0.49\linewidth]{../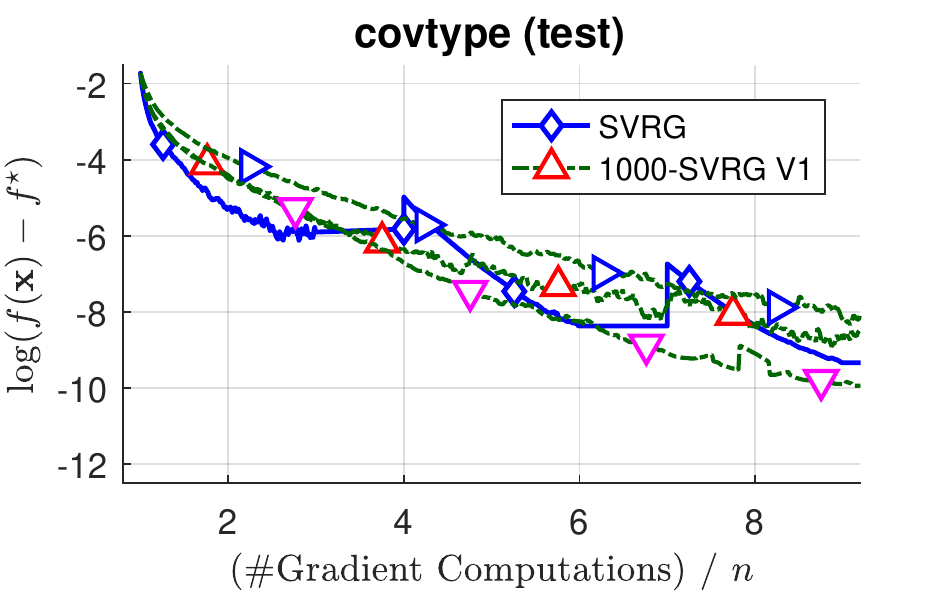}
\hfill
  \includegraphics[width=0.49\linewidth]{../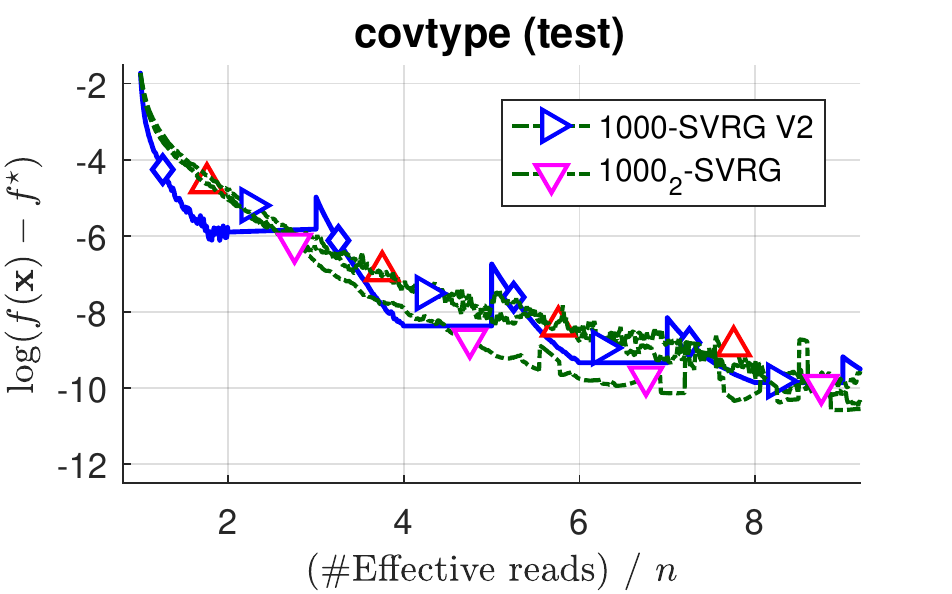}
  \vspace*{-1.5em}  
\caption{Evolution of residual loss on \emph{covtype (test)} for SVRG, $1000$\mHone, $1000$\mHtwo{} and $1000_2$\mHpract.}
\label{fig:compare_k1000}
\end{figurehere}

\end{multicols}

\clearpage
\subsection{Dataset: \emph{mnist}}
\begin{multicols}{2}
\begin{figurehere}
\centering
  \includegraphics[width=0.49\linewidth]{../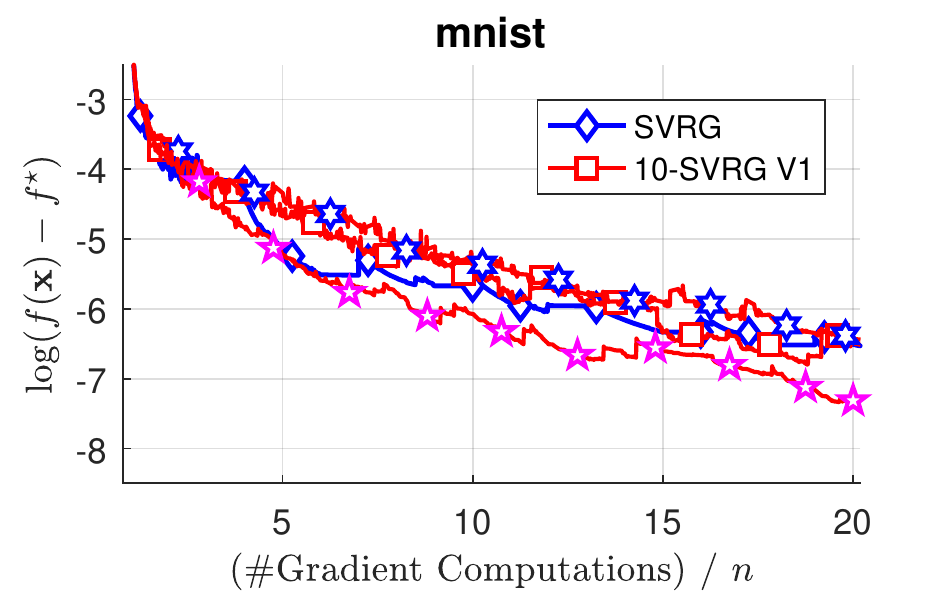}
\hfill
  \includegraphics[width=0.49\linewidth]{../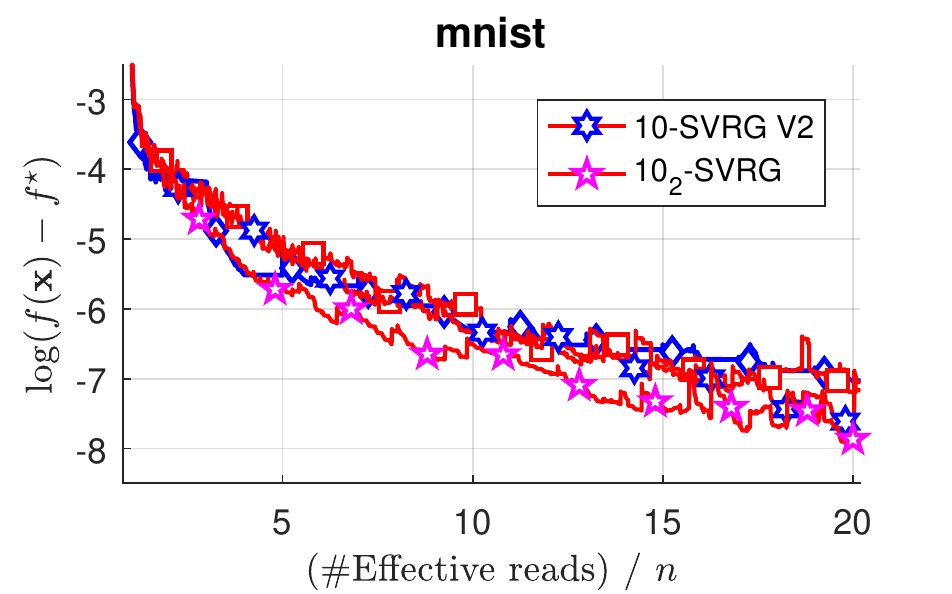}
  \vspace*{-1.5em}  
\caption{Evolution of residual loss on \emph{mnist} for SVRG, $10$\mHone, $10$\mHtwo{} and $10_2$\mHpract.}
\label{fig:mnist_compare_k10}
\end{figurehere}
\begin{figurehere}
\centering
  \includegraphics[width=0.49\linewidth]{../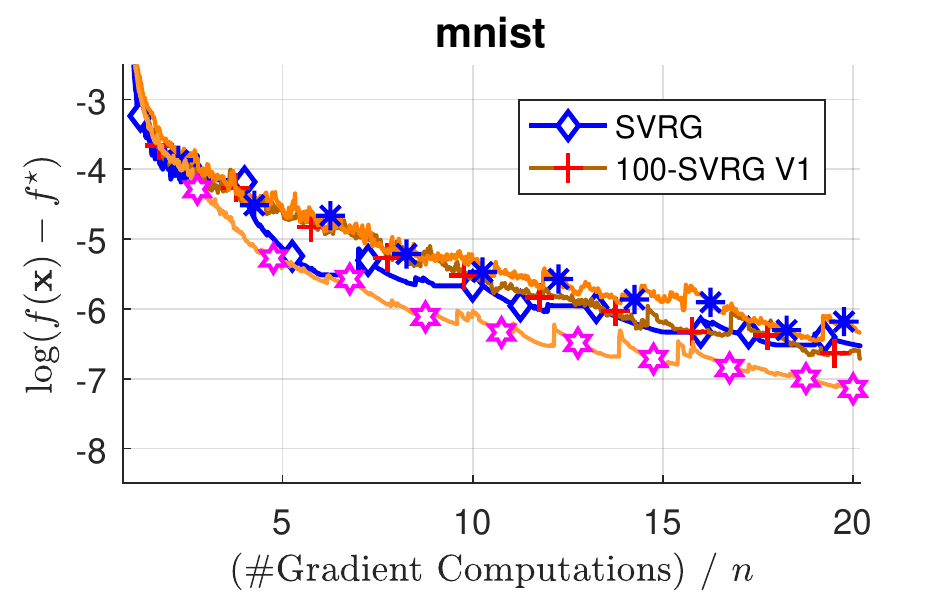}
\hfill
  \includegraphics[width=0.49\linewidth]{../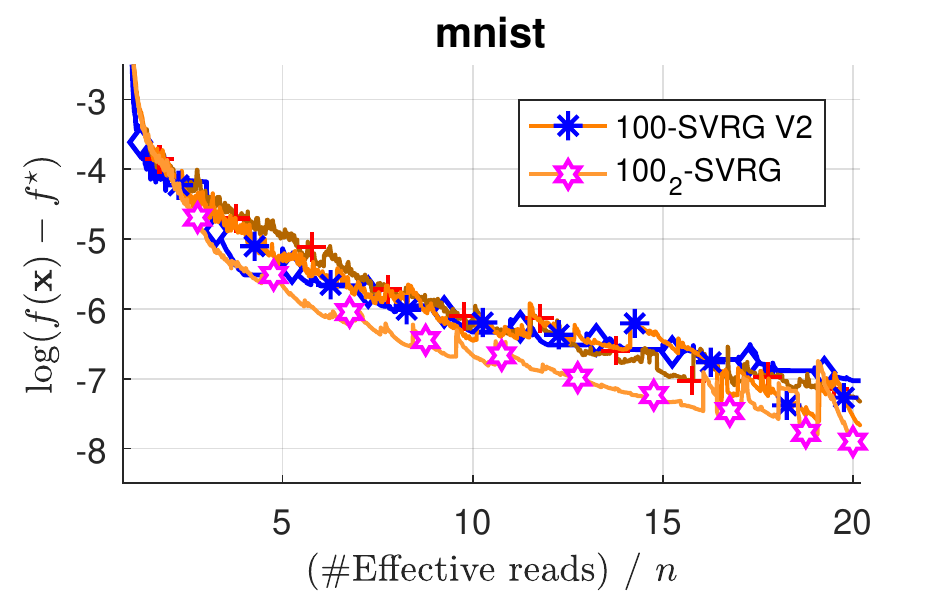}
  \vspace*{-1.5em}  
\caption{Evolution of residual loss on \emph{mnist} for SVRG, $100$\mHone, $100$\mHtwo{} and $100_2$\mHpract.}
\label{fig:mnist_compare_k100}
\end{figurehere}
\end{multicols}
\begin{multicols}{2}
\begin{figurehere}
\centering
  \includegraphics[width=0.49\linewidth]{../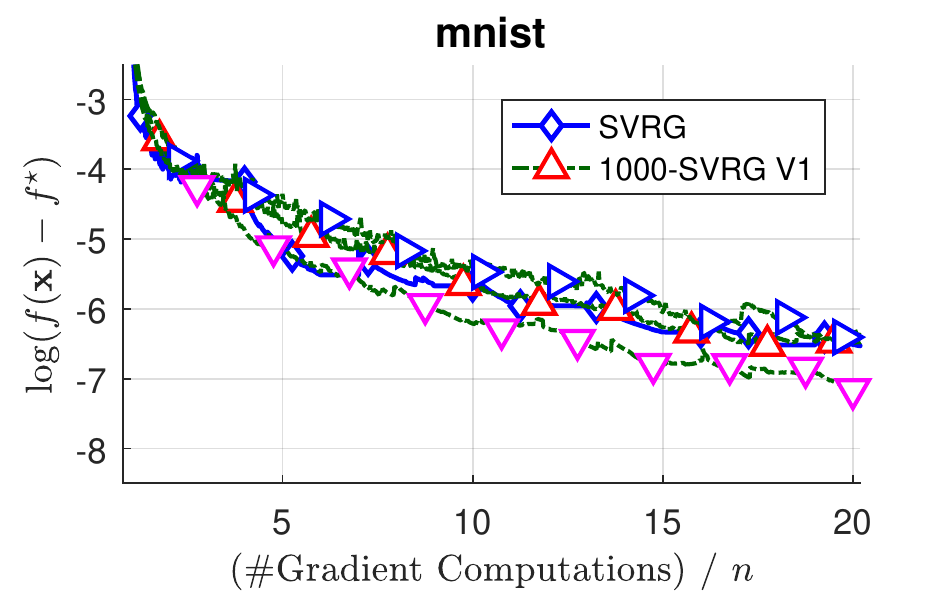}
\hfill
  \includegraphics[width=0.49\linewidth]{../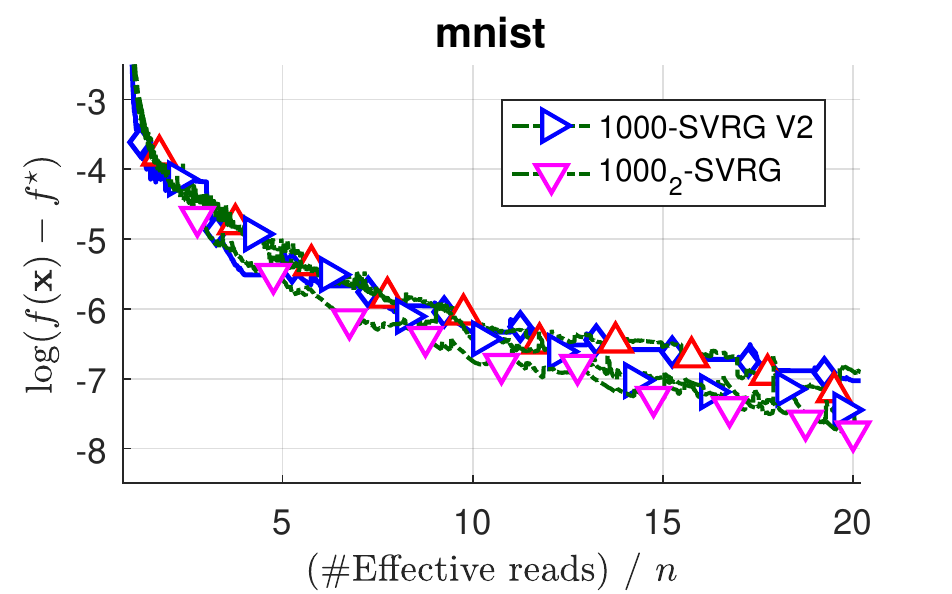}
  \vspace*{-1.5em}  
\caption{Evolution of residual loss on \emph{mnist} for  SVRG, $1000$\mHone, $1000$\mHtwo{} and $1000_2$\mHpract.}
\label{fig:mnist_compare_k1000}
\end{figurehere}

\vspace{8cm}\null
\end{multicols}

\begin{figurehere}
\centering
\hfill
  \includegraphics[width=0.325\linewidth]{../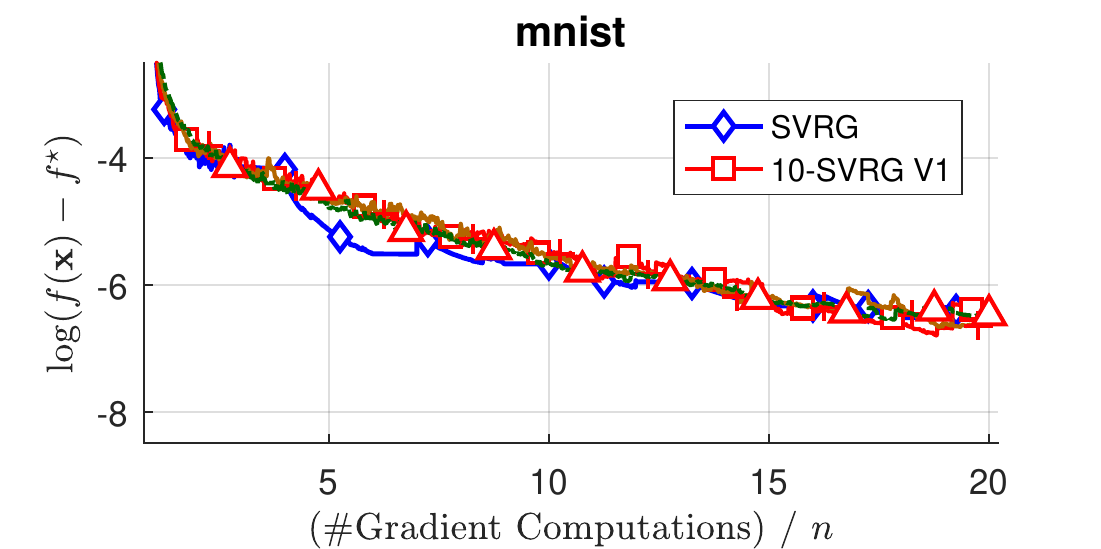}
\hfill
  \includegraphics[width=0.325\linewidth]{../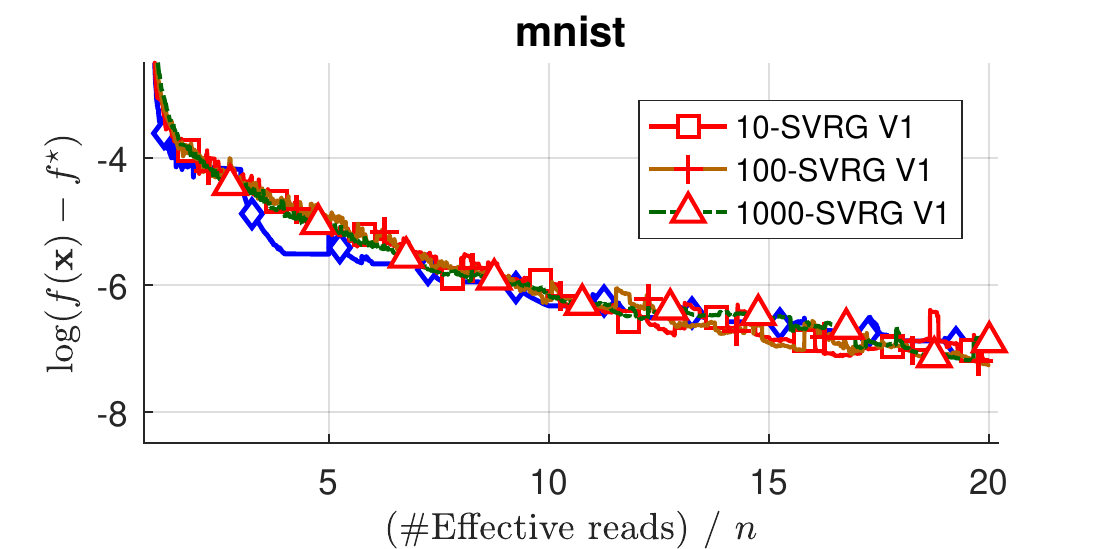}  
\hfill\null   
\caption{Evolution of residual loss on \emph{mnist} for  SVRG and \mVone{} for  $k=\{10,100,1000\}$.}
\label{fig:mnist_compare_var1}
\end{figurehere}
\begin{figurehere}
\centering
\hfill
  \includegraphics[width=0.325\linewidth]{../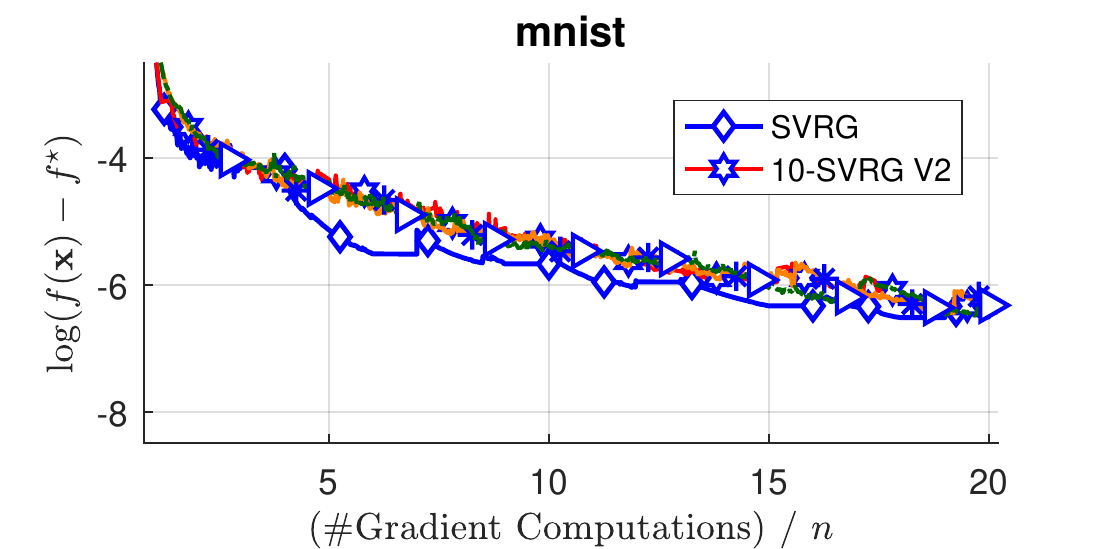}
\hfill
  \includegraphics[width=0.325\linewidth]{../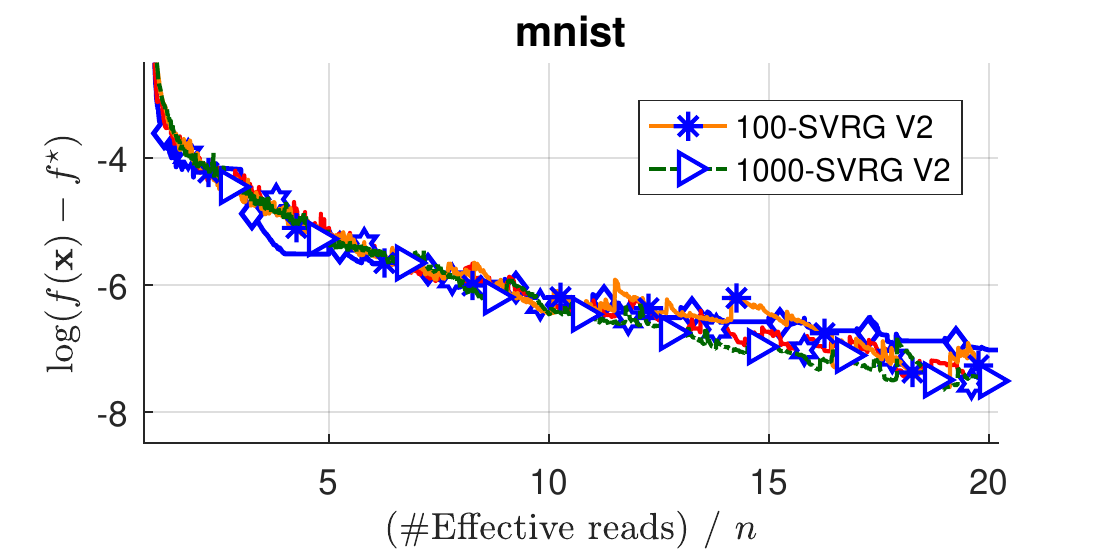}  
\hfill\null   
\caption{Evolution of residual loss on \emph{mnist} for  SVRG and \mVtwo{} for  $k=\{10,100,1000\}$.}
\label{fig:mnist_compare_var2}
\end{figurehere}
\begin{figurehere}
\centering
\hfill
  \includegraphics[width=0.325\linewidth]{../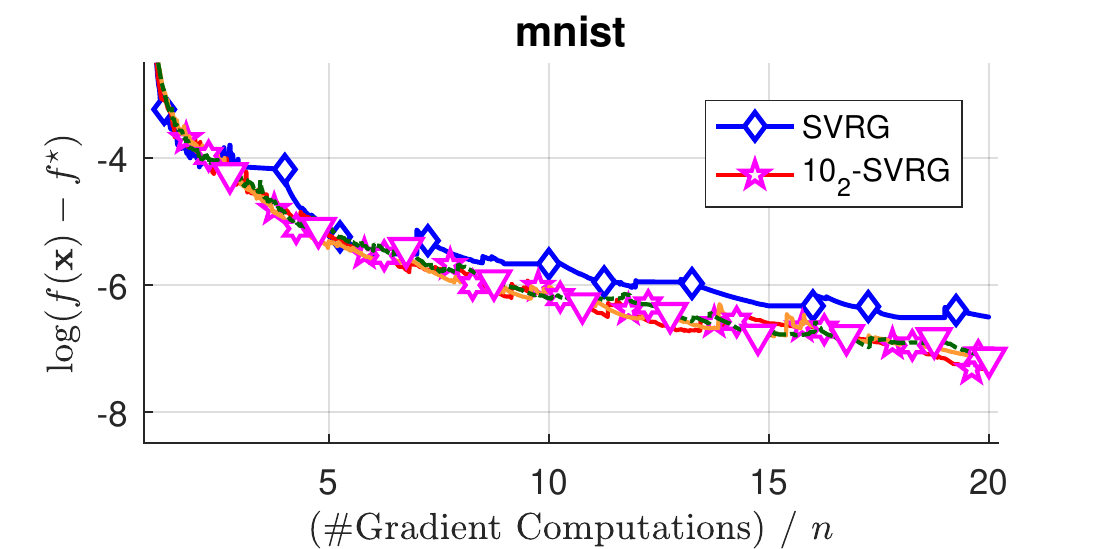}
\hfill
  \includegraphics[width=0.325\linewidth]{../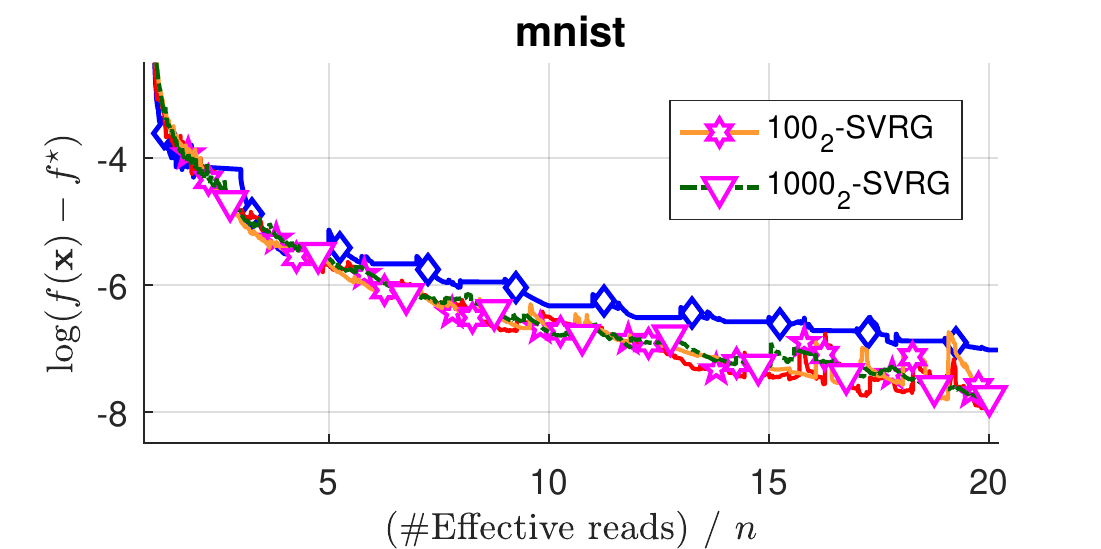}  
\hfill\null  
\caption{Evolution of residual loss on \emph{mnist} for  SVRG and \mpract{} for  $k=\{10,100,1000\}$.}
\label{fig:mnist_compare_pract}
\end{figurehere}

\clearpage
\subsection{Dataset: \emph{covtype (train)}}
\begin{multicols}{2}
\begin{figurehere}
\centering
  \includegraphics[width=0.49\linewidth]{../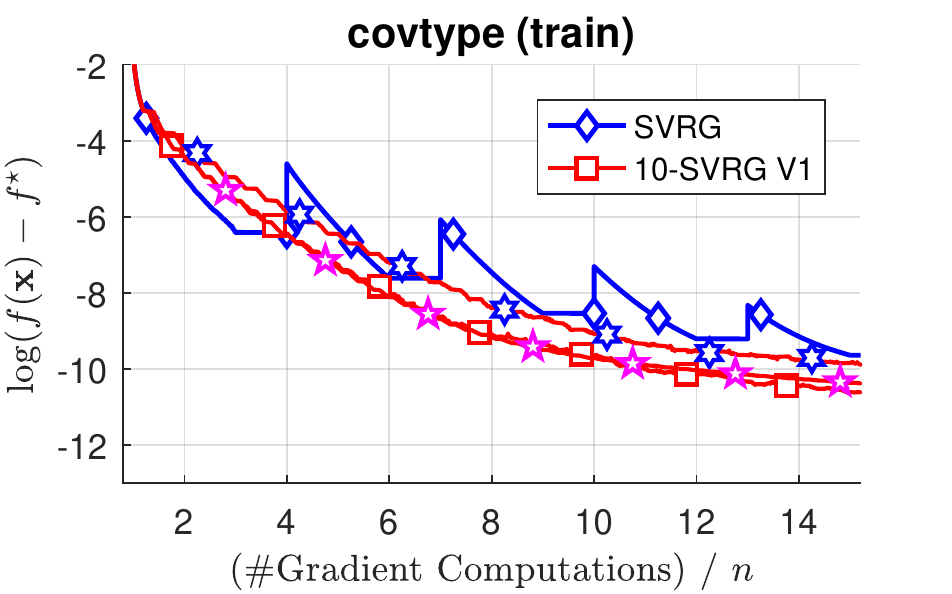}
\hfill
  \includegraphics[width=0.49\linewidth]{../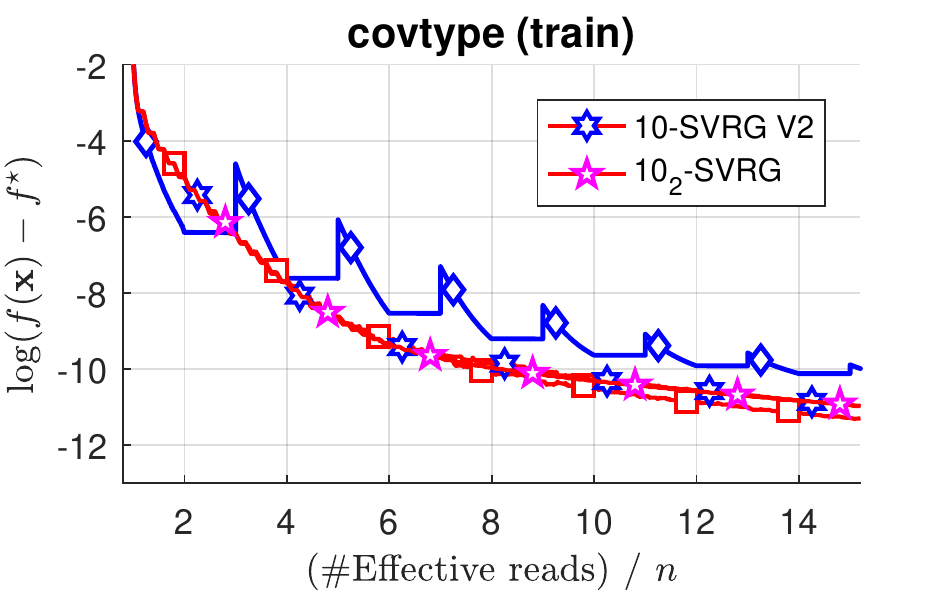}
  \vspace*{-1.5em}  
\caption{Evolution of residual loss on \emph{covtype (train)} for  SVRG, $10$\mHone, $10$\mHtwo{} and $10_2$\mHpract.}
\label{fig:cov2_compare_k10}
\end{figurehere}
\begin{figurehere}
\centering
  \includegraphics[width=0.49\linewidth]{../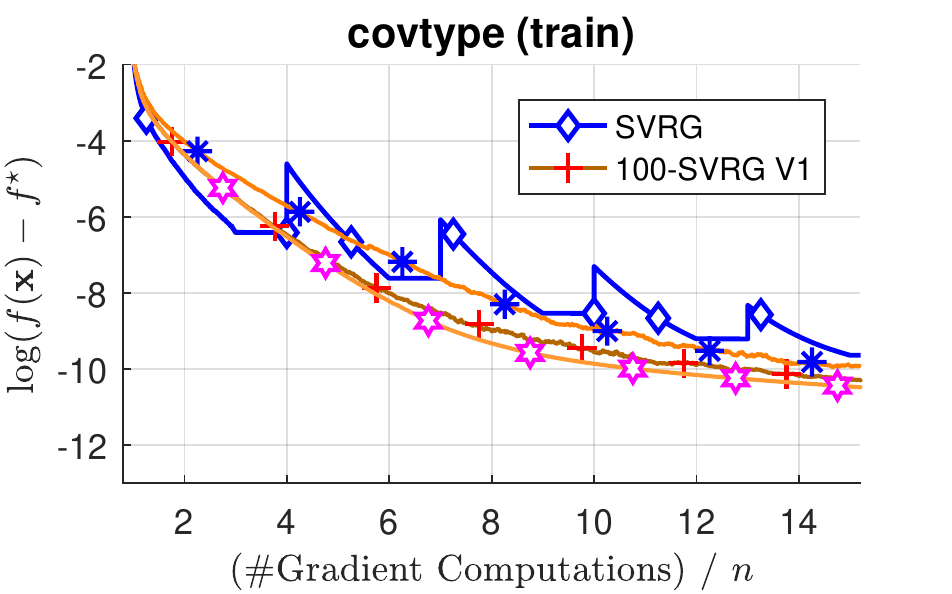}
\hfill
  \includegraphics[width=0.49\linewidth]{../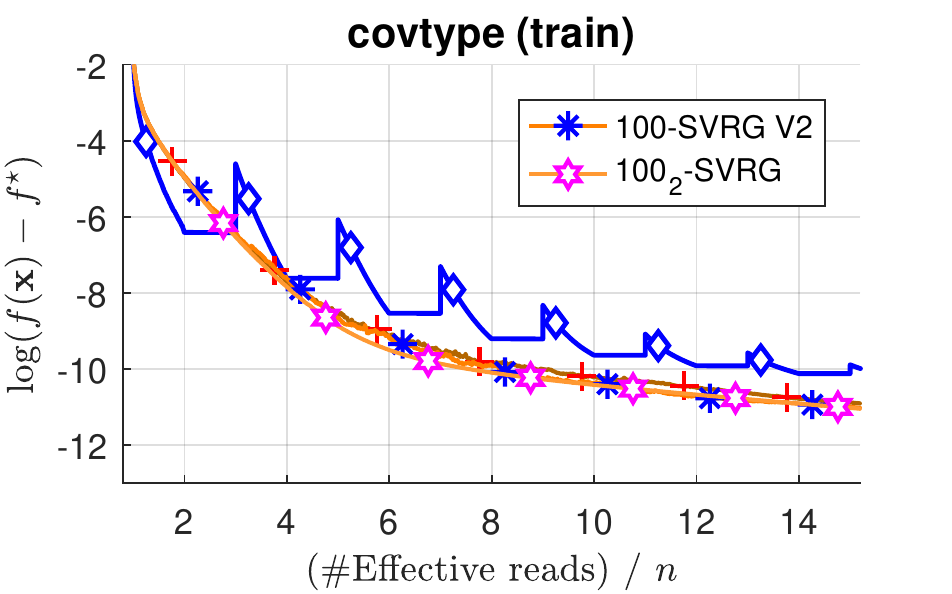}
   \vspace*{-1.5em}  
\caption{Evolution of residual loss on \emph{covtype (train)} for  SVRG, $100$\mHone, $100$\mHtwo{} and $100_2$\mHpract.}
\label{fig:cov2_compare_k100}
\end{figurehere}
\end{multicols}
\begin{multicols}{2}
\begin{figurehere}
\centering
  \includegraphics[width=0.49\linewidth]{../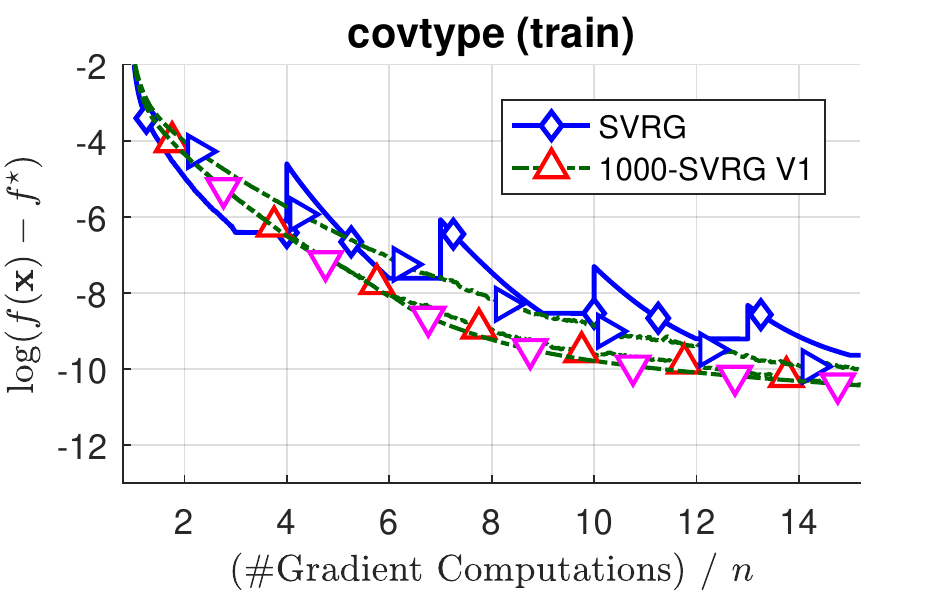}
\hfill
  \includegraphics[width=0.49\linewidth]{../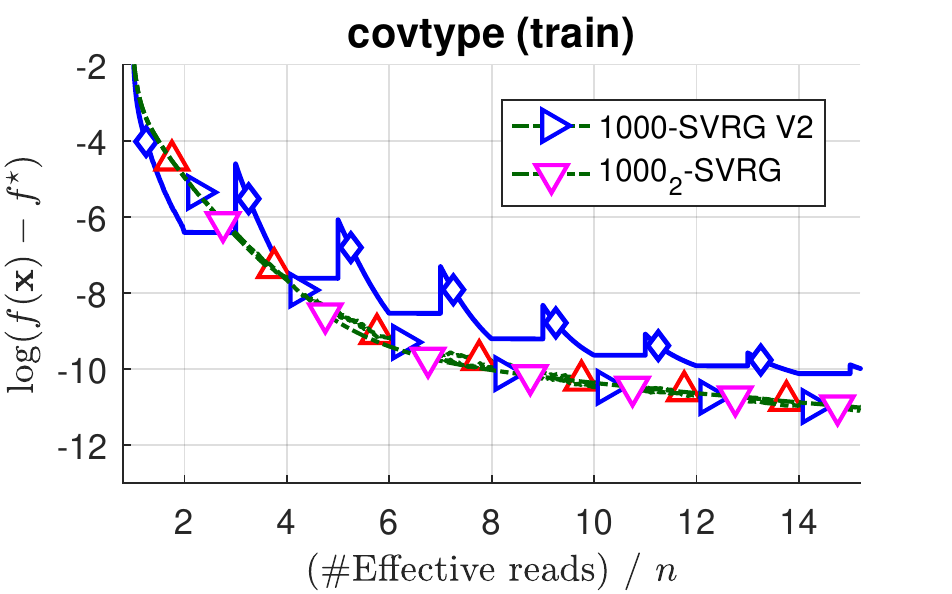}
   \vspace*{-1.5em}  
\caption{Evolution of residual loss on \emph{covtype (train)} for  SVRG, $1000$\mHone, $1000$\mHtwo{} and $1000_2$\mHpract.}
\label{fig:cov2_compare_k1000}
\end{figurehere}

\vspace{8cm}\null
\end{multicols}

\begin{figurehere}
\centering
\hfill
  \includegraphics[width=0.325\linewidth]{../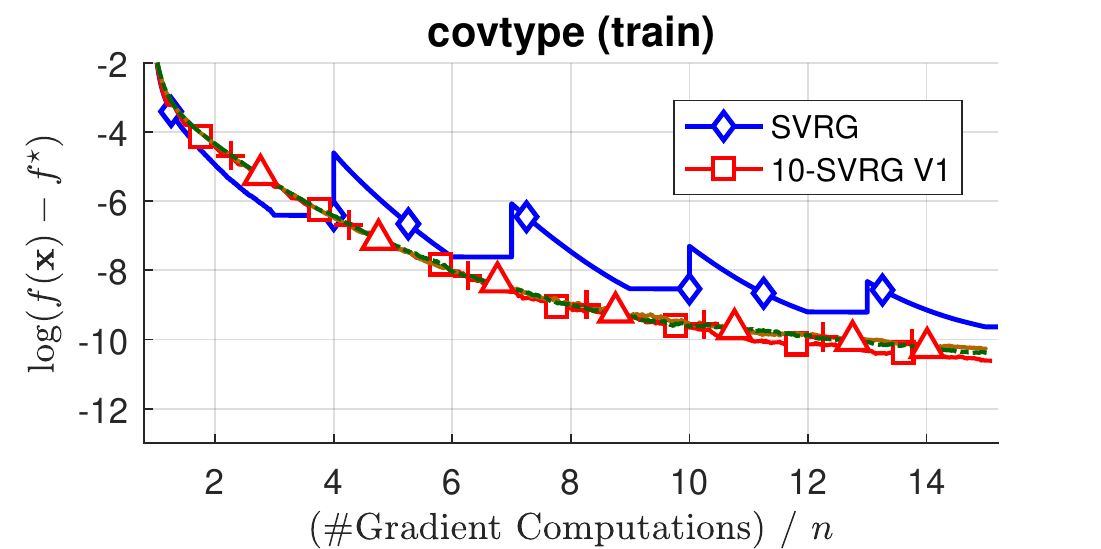}
\hfill
  \includegraphics[width=0.325\linewidth]{../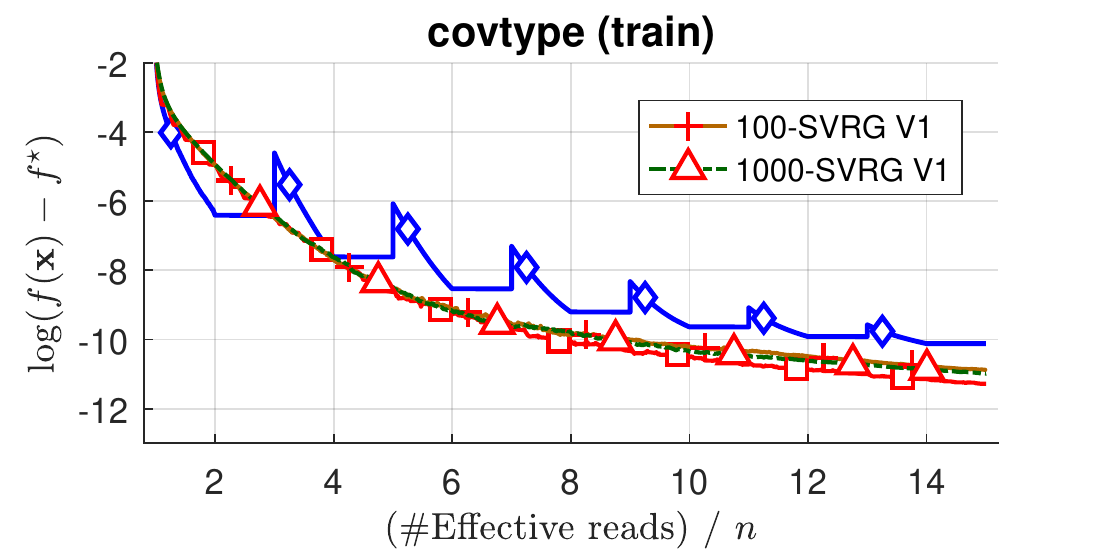}  
\hfill\null
\caption{Evolution of residual loss on \emph{covtype (train)} for SVRG and \mVone{} for  $k=\{10,100,1000\}$.}
\label{fig:cov2_compare_var1}
\end{figurehere}
\begin{figurehere}
\centering
\hfill
  \includegraphics[width=0.325\linewidth]{../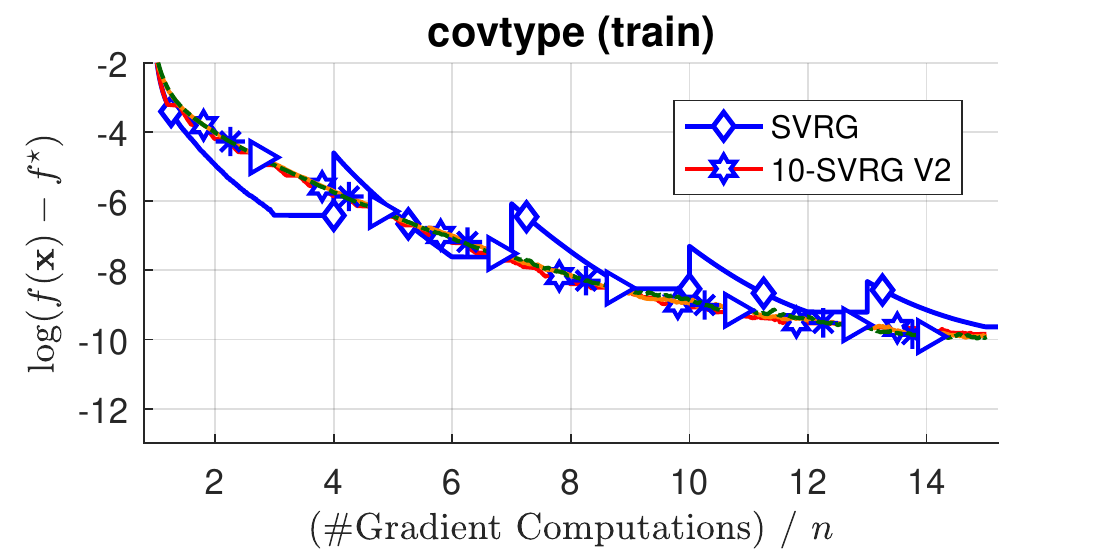}
\hfill
  \includegraphics[width=0.325\linewidth]{../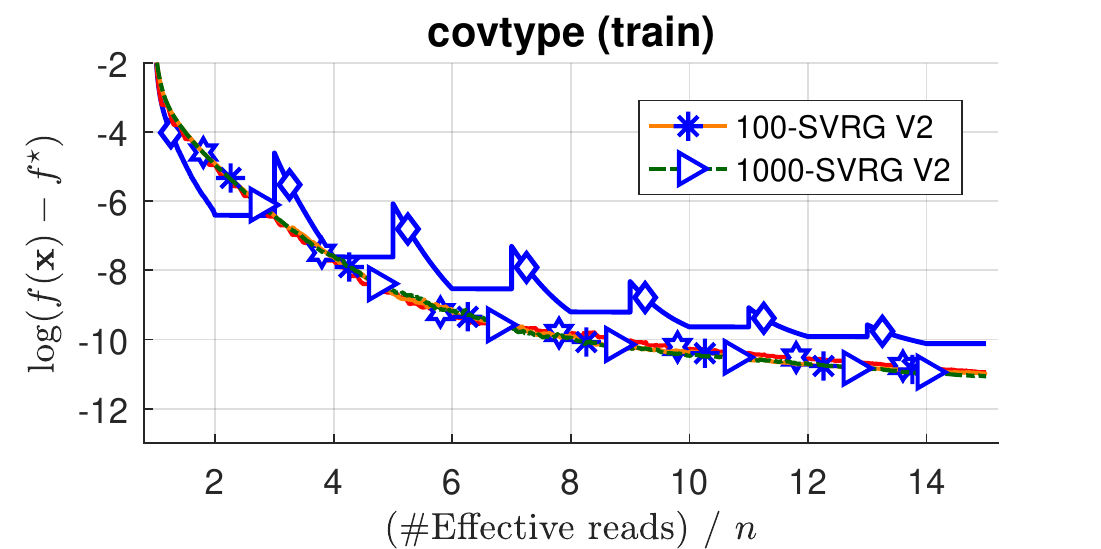}  
\hfill\null  
\caption{Evolution of residual loss on \emph{covtype (train)} for SVRG and \mVtwo{} for  $k=\{10,100,1000\}$.}
\label{fig:cov2_compare_var2}
\end{figurehere}
\begin{figurehere}
\centering
\hfill
  \includegraphics[width=0.325\linewidth]{../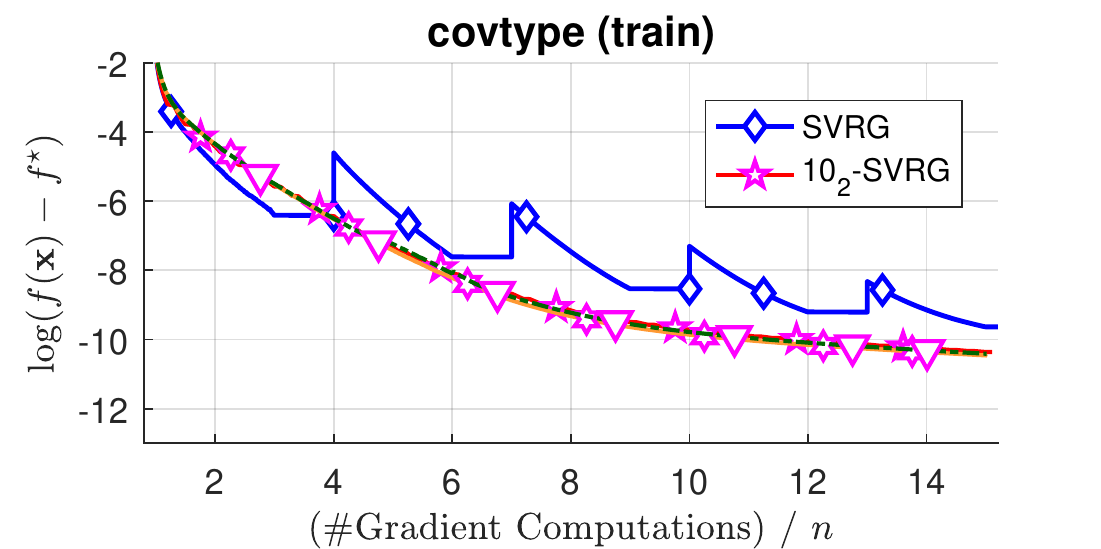}
\hfill
  \includegraphics[width=0.325\linewidth]{../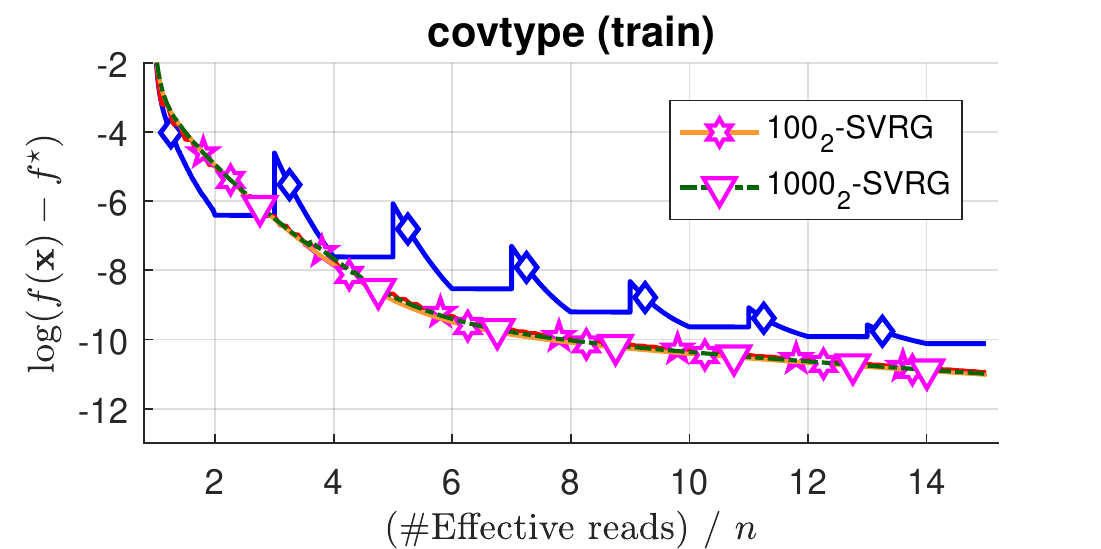}  
\hfill\null    
\caption{Evolution of residual loss on \emph{covtype (train)} for SVRG and \mpract{} for  $k=\{10,100,1000\}$.}
\label{fig:cov2_compare_pract}
\end{figurehere}

 \clearpage

\end{document}